\newtheorem{theorem}{Theorem}
\newtheorem{lemma}[theorem]{Lemma}
\newcommand{\R}{\mathbb R}
\numberwithin{theorem}{section}
\numberwithin{equation}{section}
\renewcommand{\email}[2][]{%
  \ifx\emails\@empty\relax\else{\g@addto@macro\emails{,\space}}\fi%
  \@ifnotempty{#1}{\g@addto@macro\emails{\textrm{(#1)}\space}}%
  \g@addto@macro\emails{#2}%
}
\title{Linear Stability of Schwarzschild Spacetime Subject to Axial Perturbations}
\author{Pei-Ken Hung and Jordan Keller}
\address{Pei-Ken Hung, Department of Mathematics, Columbia University, Room 509, MC 4406, 2990 Broadway, New York, NY, 10027}
\email{pkhung@math.columbia.edu}
\address{Jordan Keller, Department of Mathematics, Columbia University, Room 509, MC 4406, 2990 Broadway, New York, NY, 10027}
\email{keller@math.columbia.edu}
\begin{document}

\begin{abstract}
In this paper, we address the issue of linear stability of Schwarzschild spacetime subject to certain axisymmetric perturbations.  In particular, we prove that associated solutions to the linearized vacuum Einstein equations centered at a Schwarzschild metric, with suitably regular initial data, decay to a linearized Kerr metric.  Our method employs a complex line bundle interpretation applied to a connection-level object, allowing for direct analysis of this connection-level object by the linearized Einstein equations, in contrast with the recent breakthrough of Dafermos-Holzegel-Rodnianski.
\end{abstract}
\maketitle

\section{Introduction}

The Schwarzschild spacetimes $(\mathcal{M},g_{M})$, parametrized by mass $M$, constitute the simplest family of solutions of the vacuum Einstein equations

\begin{equation}\label{eq: vacuumEinstein}
Ric(g) = 0.
\end{equation}

Stability of the Schwarzschild metric as a solution to \eqref{eq: vacuumEinstein} is an issue of fundamental importance in the study of isolated systems, with the tenability of the black hole concept, and the bizarre attendant geometry, being contingent on an affirmative answer.  Non-linear stability in the Minkowski setting was established in the monumental work of Christodoulou and Klainerman \cite{ChristKlainerman}, yet the fully non-linear problem for the Schwarzschild spacetime remains unresolved, with little direct progress to date.  On the other hand, the linearized setting has a rich history of study, beginning with modal analysis and culminating in the recent breakthrough of Dafermos-Holzegel-Rodnianski \cite{DHR}.

Early efforts by physicists assumed a formal mode decomposition in the time and angular variables, with solutions of the form

$$ e^{i\sigma t}Z(r)Y_{slm}(\theta,\phi).$$

The study of these individual modes was begun in the work of Regge-Wheeler \cite{RW}, culminating in the papers of Vishveshwara \cite {Vishveshwara} and Zerilli \cite{Zerilli}.  Later efforts by Moncrief \cite{Moncrief} placed these two authors' approaches on firmer geometric footing, while Chandrasekhar \cite{Chandra2} demonstrated a transformation theory connecting their disparate analyses.  A definitive treatment of this mode analysis appears in the excellent monograph of Chandrasekhar \cite{Chandra1}, to which we refer the reader.

Following Chandrasekhar, we consider axisymmetric perturbations of a central Schwarzschild spacetime, of mass $M$.  Expressing the perturbation in a particular coordinate system, specializing to standard Schwarzschild coordinates on the central spacetime, the linearization of the vacuum Einstein equations \eqref{eq: vacuumEinstein} split naturally into so-called axial and polar perturbations.  Building upon the work of Vishveshwara \cite{Vishveshwara}, we address the issue of linear stability in the axial setting.  We summarize our results as follows:

\begin{theorem}
Axial solutions to the linearized vacuum Einstein equations around a Schwarzschild metric, specified by suitably regular, asymptotically flat initial data,

\begin{enumerate}
\item remain uniformly bounded on the exterior region and
\item decay to a linearized Kerr metric.
\end{enumerate}

\end{theorem}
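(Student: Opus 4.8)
The plan is to reduce the axial sector of the linearized Einstein equations to a single scalar wave equation---the Regge-Wheeler equation---and then run a vector field argument to extract boundedness and decay. First I would isolate the connection-level master quantity $\Psi$ advertised in the abstract. Exploiting the axial symmetry, the metric perturbation is parametrized by a small number of functions on the quotient of $(\mathcal{M}, g_M)$ by the rotation group; assembling the relevant connection coefficients into a section of a complex line bundle over the orbit spheres $S^2$ diagonalizes the angular Laplacian into the eigenvalues $\ell(\ell+1)$ and collapses the coupled system to
\[
\partial_t^2 \Psi - \partial_{r^*}^2 \Psi + V \Psi = 0, \qquad V = \left(1 - \frac{2M}{r}\right)\left(\frac{\ell(\ell+1)}{r^2} - \frac{6M}{r^3}\right),
\]
where $r^*$ denotes the tortoise coordinate. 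The modes $\ell = 0, 1$ must be treated separately: $\ell = 0$ carries no axial dynamics, while the stationary $\ell = 1$ solution is precisely the linearized Kerr metric, an infinitesimal addition of angular momentum. Thus statement (2) amounts to showing that the full dynamical content, carried by the $\ell \geq 2$ part of $\Psi$, decays, leaving only this stationary piece.

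For boundedness I would run the energy method adapted to the black hole exterior. The static Killing field $\partial_t$ furnishes a conserved energy that is, however, degenerate at the horizon $r = 2M$; to upgrade it to a nondegenerate, uniformly bounded energy I would invoke the redshift vector field of Dafermos-Rodnianski, whose positive bulk term near $r = 2M$ absorbs the degeneration. The essential structural input is that $V$ is nonnegative for $\ell \geq 2$ throughout the exterior $r \geq 2M$ and vanishes at both ends $r^* \to \pm\infty$, so no zeroth-order term spoils the coercivity. This yields claim (1).

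Decay requires an integrated local energy decay (Morawetz) estimate. Here I would test the equation against a multiplier of the form $f(r^*)\,\partial_{r^*}\Psi + \tfrac{1}{2}f'(r^*)\,\Psi$, choosing $f$ to change sign at the photon sphere $r = 3M$ so that the resulting bulk term is positive definite away from $r = 3M$. The principal obstacle is precisely the \emph{trapping} at the photon sphere: there the bulk coefficient of the angular derivatives degenerates, and no first-order multiplier can produce a coercive estimate without loss of derivatives. I would resolve this in the standard way, commuting the equation with $\partial_t$ and the angular momentum operators to recover the lost derivative at the trapped set, exploiting that trapping in Schwarzschild is localized to a single value of $r$. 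Combined with the redshift and the conserved energy, this delivers a nondegenerate integrated local energy decay estimate.

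Finally, to convert integrated decay into pointwise decay I would employ the $r^p$-weighted hierarchy of Dafermos-Rodnianski: multiplying by $r^p(\partial_t + \partial_{r^*})\Psi$ and integrating over the region near null infinity produces, for $p = 1, 2$, a hierarchy of weighted estimates whose iteration yields polynomial-in-time decay of the energy flux, and hence, after commutation and Sobolev embedding on the orbit spheres, pointwise decay of $\Psi$ on the exterior. Transferring this decay back through the bundle identification to the metric perturbation components shows that the $\ell \geq 2$ content vanishes in the limit, so the solution converges to the stationary $\ell = 1$ linearized Kerr metric, establishing (2). I expect the trapping analysis at $r = 3M$ to be the main difficulty, both in the careful design of the Morawetz multiplier and in tracking the commutation bookkeeping needed to close the estimate without derivative loss.
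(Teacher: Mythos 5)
Your overall strategy---decouple a Regge--Wheeler quantity from the axial equations, prove boundedness with the $T$-energy plus the redshift multiplier, prove integrated local energy decay with a Morawetz multiplier degenerating at $r=3M$, and upgrade to pointwise decay---matches the paper's skeleton, and your substitution of the $r^p$-weighted hierarchy for the paper's conformal multiplier $Z=u^2\partial_u+v^2\partial_v$ with its localized-decay/bootstrap argument is a legitimate alternative for the decay step. The structural difference is that the paper does not reduce everything to a single master scalar: it works with three connection-level quantities $(\alpha,\beta,\gamma)$ built from $Q_{23}$, $Q_{02}$, $Q_{03}$, and shows that \emph{two} of them, $\alpha$ and $\beta$, independently satisfy Regge--Wheeler equations (as sections of $\mathcal{L}(-2)$ and $\mathcal{L}(-1)$, with potentials $\frac{4}{r^2}(1-\mu)$ and $\frac{1}{r^2}(1-\frac{8M}{r})$ and with the angular operator kept as a bundle Laplacian rather than expanded into $\ell(\ell+1)$), while $\gamma$ is recovered from the remaining first-order relations.

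The genuine gap in your proposal is the final ``transfer'' step. Your master scalar $\Psi$ is essentially the paper's $\alpha$; the other metric components are tied to it only through first-order transport relations such as $\beta_t=-\sqrt{2}\,r\bar{\partial}\alpha$. Integrating this in time with only the $\tau^{-1}$ pointwise decay you obtain for $\alpha$ and its angular derivatives gives a logarithmically divergent bound, so decay of $\Psi$ does not by itself yield boundedness, let alone decay, of the full perturbation; the paper's derivation of a second, independent Regge--Wheeler equation for $\beta$ exists precisely to avoid this loss. Relatedly, your identification of the $\ell=1$ sector with stationary linearized Kerr is asserted rather than proved: one must first show that the projection $\beta_1$ is time-independent (the paper does this from $\sqrt{2}\,r\bar{\partial}\alpha+\beta_t=0$ together with $\partial Y_{11m}=0$), then solve the resulting radial ODE and invoke asymptotic flatness to discard the growing branch, leaving exactly the linearized Kerr profile $-6Ma^{(1)}r^{-2}\sin^2\theta\,\Psi_{-1}$. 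Without these two ingredients the normalization in (2) and the convergence statement do not close.
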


Our conclusions emerge from a direct examination of the linearized vacuum Einstein equations, yielding the decoupling of connection-level objects.  Interpreted as sections of certain complex line bundles, equivalently as sections of spin bundles, over Schwarzschild spacetime, these quantities satisfy Regge-Wheeler equations of the same sort found in \cite{DHR}.  The vector field methods applied to the scalar wave equation on Schwarzschild carry over to our setting, yielding decay estimates which complete the proof of linear stability.

Until recently the mode analysis upon which we build was the extent of the understanding of linear stability in the Schwarzschild setting.  Although a fuller treatment of the Einstein field equations was long understood to be of primary importance in addressing issues of stability, particularly that of black hole solutions, the analytical framework required in such a treatment has only recently emerged.  Early efforts by Klainerman \cite{Klainerman1, Klainerman2} in the Minkowski spacetime introduced a robust vector field multiplier method, by means of which he obtained uniform boundedness and decay estimates on the classical wave equation, without reliance on explicit solution formulae.  This multiplier method was then extended to more complicated geometric and non-linear settings, culminating in the monumental work of Christodoulou and Klainerman \cite{ChristKlainerman}, later refined in \cite{Bieri, Lindblad, Speck}, establishing the non-linear stability of the Minkowski spacetime.  

As well in the Schwarzschild and Kerr settings, the estimates and techniques from the study of the scalar wave equation, regarded as a ``poor man's'' linearization of the vacuum Einstein equations, are expected to prove an essential ingredient in further progress on linear stability, with developments in linear stability playing a similar role in non-linear stability.  A complete theory is now in place for scalar waves, first appearing in the seminal works of \cite{DR} in the Schwarzschild setting and \cite{DRR} in sub-extremal Kerr, with contributions and refinements also appearing in \cite{KayWald, BlueSterbenz, AnderssonBlue, Tataru1, Tataru2, Smoller, Luk}.  Building upon these results, the recent breakthrough of Dafermos-Holzegel-Rodnianski \cite{DHR} proves linear stability of the Schwarzschild spacetime subject to gravitational perturbations, superseding the earlier modal analysis.

We emphasize that, in contrast with the more general result of Dafermos-Holzegel-Rodnianski, or indeed any of the earlier results on the Minkowski spacetime, our method relies upon the analysis of a connection-level object.  In particular, the authors \cite{DHR} rely upon a multi-level hierarchy of geometric objects, ascending to the level of second derivatives of curvature, with an accordingly high degree of complexity in their analysis. 

\section{Statement of Results}

A linearized solution will consist of the three quantities $(\alpha, \beta, \gamma)$ defined in Section 5.1.  The first two theorems pertain to the quantities $\alpha$ and $\beta$, each satisfying a Regge-Wheeler equation.  Subject to appropriate normalization, by the addition of a suitable linearized Kerr solution, we obtain statements of uniform boundedness and uniform decay for $\alpha$ and $\beta$.

First we define all the relevant energy norms, specified in terms of the red-shift multiplier $N$.  For a section $\xi$, we define the fluxes

\begin{align}
E^{N}_{\xi}(\Sigma_{\tau}) &:=\int_{\Sigma_{\tau}} J^{N}_{a}[\xi]\eta^{a},\\
E^{N}_{\xi}(\tilde{\Sigma}_{\tau}) &:= \int_{\tilde{\Sigma}_{\tau}} J^{N}_{a}[\xi]\eta^{a},
\end{align}

\noindent
in terms of the hypersurfaces $\Sigma_{\tau}$ and $\tilde{\Sigma}_{\tau}$ of Sections 7.3 and 7.7, and the initial energies

\begin{align}\label{energies}
E_0[\xi] &:= \sum_{(m)\leq 2}\int_{\{t = 0 \}} J^{N}_{a}[\Omega^{(m)}\xi]\eta^{a},\\
E_1[\xi] &:= \sum_{(m)\leq 3} \int_{\{t = 0 \}} (1+r_*^2)J^{N}_{a}[\Omega^{(m)}\xi]\eta^{a},\\
E_2[\xi] &:= \sum_{(m)\leq 6} \int_{\{t = 0 \}} (1+r_*^2)J^{N}_{a}[\Omega^{(m)}\xi]\eta^{a},
\end{align}

\noindent
expressed in terms of the (weighted) flux of $N$ through $\{ t = 0\}$ and the angular Killing fields $\Omega_{i}$.

\begin{theorem}
Let $\alpha$ be a solution of the Regge-Wheeler equation (\ref{eq: RW1}), smooth and compactly supported on the time-slice $\{ t = 0 \}$.  Then $\alpha$ satisfies the following estimates:

\begin{enumerate}
\item the uniform energy bound

\begin{equation}
E^{N}_{\alpha}(\Sigma_{\tau}) \leq C E^{N}_{\alpha}(\Sigma_{0}),
\end{equation}

\item the uniform pointwise bound

\begin{equation}
|\alpha|_{C^{0}(\mathcal{D})} \leq C E_0[\alpha],
\end{equation}

\noindent
where $\mathcal{D}= J^{+}(\Sigma_{0})\cap{J^{-}(\mathcal{I}^{+})}$,

\item the uniform energy decay

\begin{equation}
E^{N}_{\alpha}(\tilde{\Sigma}_{\tau}) \leq C E_1[\alpha] \tau^{-2},
\end{equation}

\item the uniform pointwise decay

\begin{equation}
\sup_{\tilde{\Sigma}_{\tau}} |\alpha| \leq C\sqrt{E_2[\alpha]}\tau^{-1}.
\end{equation}

\end{enumerate}

\end{theorem}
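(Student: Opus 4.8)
The plan is to prove the four estimates as a package, following the now-standard vector field multiplier program for the wave equation on Schwarzschild, since the statement asserts that $\alpha$ satisfies a Regge-Wheeler equation \eqref{eq: RW1} of exactly the type analyzed in \cite{DR, DHR}. The key structural input is that the Regge-Wheeler potential is nonnegative and that $\alpha$, viewed as a section of the relevant line bundle, obeys a scalar-type equation to which the energy-momentum tensor formalism applies. The four conclusions correspond to the four classical ingredients: (i) a conservation/coercivity argument for boundedness, (ii) a Sobolev-type commuted estimate for the pointwise bound, (iii) the $r^p$-hierarchy (or Morawetz plus dyadic pigeonholing) for energy decay, and (iv) interpolation of energy decay with the commuted energy to obtain pointwise decay.

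I would carry out the steps in the following order. First, for the uniform energy bound (1), I would use the red-shift multiplier $N$, which by construction (Sections 7.3, 7.7) is timelike and future-directed near and up to the horizon, so that the associated current $J^{N}_{a}[\alpha]$ has positive-definite flux $E^{N}_{\alpha}(\Sigma_\tau)$, while the spacetime divergence $K^{N}[\alpha]$ is controlled by the energy in the red-shift region. Applying the divergence theorem on the region bounded by $\Sigma_0$ and $\Sigma_\tau$ and a Gronwall argument yields the comparison $E^{N}_{\alpha}(\Sigma_\tau) \leq C\, E^{N}_{\alpha}(\Sigma_0)$. Second, for the pointwise bound (2), I would commute the equation with the angular Killing fields $\Omega_i$ up to second order, apply (1) to each $\Omega^{(m)}\alpha$ with $(m)\leq 2$, and invoke a Sobolev embedding on the spheres of symmetry together with the fundamental theorem of calculus in $r_*$; this produces the $C^0$ bound over $\mathcal{D}$ in terms of $E_0[\alpha]$.

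Third, for the energy decay (3), I would establish an integrated local energy decay (Morawetz) estimate, handling the trapping at the photon sphere $r=3M$ by a suitably constructed multiplier whose degeneration is removed upon commuting with $\Omega_i$; combining this with the $r^p$-weighted estimates of Dafermos-Rodnianski on the foliation $\tilde\Sigma_\tau$ and performing the dyadic pigeonholing argument yields the quadratic decay rate $E^{N}_{\alpha}(\tilde\Sigma_\tau)\leq C\,E_1[\alpha]\,\tau^{-2}$, where the weight $(1+r_*^2)$ in $E_1$ supplies the initial $r^p$-flux. Fourth, for the pointwise decay (4), I would again commute with $\Omega_i$ up to the order recorded in $E_2$ (hence the count $(m)\leq 6$), apply the decay estimate (3) to the commuted quantities, and run the Sobolev argument of step two on each slice $\tilde\Sigma_\tau$ to transfer $\tau^{-2}$ energy decay into $\tau^{-1}$ pointwise decay, giving $\sup_{\tilde\Sigma_\tau}|\alpha|\leq C\sqrt{E_2[\alpha]}\,\tau^{-1}$.

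The main obstacle is the trapping obstruction concentrated at the photon sphere $r=3M$, which forces any nondegenerate Morawetz estimate to lose a derivative; this is the crux of step three and the reason the decay statements require higher-order commuted energies in $E_1$ and $E_2$ than the boundedness statement requires in $E_0$. The saving feature in the axial setting is that $\alpha$ satisfies a genuine Regge-Wheeler equation with a nonnegative potential, so the construction is essentially identical to the scalar wave case on Schwarzschild and the commutation with the angular momentum operators $\Omega_i$ suffices to recover the lost derivative near trapping; no novel analytic input beyond the existing vector field toolbox is needed, only its careful transcription to sections of the line bundle. The bookkeeping of the commuted currents near the horizon, ensuring the red-shift estimate survives commutation, is routine but must be checked to close the Gronwall argument uniformly in $\tau$.
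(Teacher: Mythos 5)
Your proposal is correct in outline, and parts (1), (2), and (4) follow essentially the same path as the paper: red-shift multiplier plus a Gronwall-type integral inequality for boundedness, commutation with the angular Killing fields $\Omega_i$ and Sobolev embedding along a radial coordinate for the pointwise statements. The one genuine divergence is in part (3). You propose the $r^p$-weighted hierarchy of Dafermos--Rodnianski together with dyadic pigeonholing to convert the Morawetz estimate into $\tau^{-2}$ energy decay. The paper instead follows the older conformal-multiplier route: it applies the analogue of the Minkowskian conformal Killing field $Z=u^2\partial_u+v^2\partial_v$ with a weight $\omega^Z$, rewrites the weighted flux in terms of the scaling operators $S$ and $\underline{S}$, uses the Poincar\'e inequality on $\mathcal{L}(-2)$ sections to absorb the cross terms, and then proves uniform boundedness of the weighted $Z$-energy via a localized decay lemma and a two-step bootstrap; quadratic decay of the $T$-energy on $\tilde\Sigma_\tau$ then falls out because $u,v\geq\tau$ there, and a second red-shift argument upgrades this to the non-degenerate $N$-energy. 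Both routes are standard and both require the same structural inputs (the degenerate Morawetz estimate, angular commutation to handle the photon-sphere degeneration, and the $(1+r_*^2)$ weight in $E_1$, which in the paper is precisely the initial $Z$-flux and in your scheme is the $p=2$ weighted flux). What your route buys is robustness and a cleaner far-region argument, avoiding the delicate integration by parts in the $\omega^Z$ weight term; what the paper's route buys is a single global multiplier identity from which the decay on the hyperboloidal foliation is immediate, at the cost of the bootstrap. Your accounting of the derivative loss at trapping and of the orders of commutation in $E_0$, $E_1$, $E_2$ is consistent with the paper's.
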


\begin{theorem}
Let $\beta$ be defined as in (\ref{betaDef}), solving the Regge-Wheeler equation (\ref{eq: RW2}).  Assuming $\beta$ is smooth and compactly supported on the time-slice $\{t = 0 \}$, then $\beta$ satisfies the following:

\begin{enumerate}

\item with the addition of a suitable linearized Kerr solution, the lowest spherical mode $\beta_1$ can be made to vanish,

\item with $\beta$ supported away from the lowest mode, $\beta = \beta_{\ell > 1},$ we have

\begin{enumerate}
\item the uniform energy bound

\begin{equation}
E^{N}_{\beta}(\Sigma_{\tau}) \leq C E^{N}_{\beta}(\Sigma_{0}),
\end{equation}

\item the uniform pointwise bound

\begin{equation}
|\beta|_{C^{0}(\mathcal{D})} \leq C E_0[\beta],
\end{equation}

\noindent
where $\mathcal{D}= J^{+}(\Sigma_{0})\cap{J^{-}(\mathcal{I}^{+})}$,

\item the uniform energy decay

\begin{equation}
E^{N}_{\beta}(\tilde{\Sigma}_{\tau}) \leq C E_1[\beta] \tau^{-2},
\end{equation}

\item the uniform pointwise decay

\begin{equation}
\sup_{\tilde{\Sigma}_{\tau}} |\beta| \leq C\sqrt{E_2[\beta]}\tau^{-1}.
\end{equation}

\end{enumerate}
\end{enumerate}
\end{theorem}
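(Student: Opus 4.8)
The plan is to treat the two parts of the theorem separately. Part (1), the normalization by which the lowest spherical mode is removed, is the genuinely new ingredient; part (2) then reduces, mode by mode, to the vector field analysis already established for $\alpha$ in the preceding theorem.

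For part (1) I would first decompose $\beta$ into spherical harmonics, writing $\beta = \beta_1 + \beta_{\ell > 1}$, where $\beta_1$ is the projection onto the lowest admissible mode $\ell = 1$ (the axial sector carrying no $\ell = 0$ contribution). The key structural fact is that this lowest mode is stationary: projected onto $\ell = 1$, the evolution for $\beta$ degenerates to a conserved quantity, so that $\beta_1$ is fixed by its initial data and does not radiate. I would then identify the resulting finite-dimensional solution space with the restriction to $\beta$ of the linearized Kerr family about Schwarzschild, parametrized by the infinitesimal angular-momentum perturbation; concretely, the axial $\ell = 1$ mode is precisely the rotation carrying Schwarzschild to a slowly rotating Kerr spacetime. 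Subtracting the linearized Kerr solution whose angular-momentum parameter matches that of $\beta_1$ then annihilates $\beta_1$, and after this normalization we may assume $\beta = \beta_{\ell > 1}$.

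For part (2), with $\beta$ now supported on $\ell > 1$, the four estimates follow from the same sequence of multiplier arguments used for $\alpha$. I would establish the uniform energy bound (a) using the timelike Killing field $\partial_t$ together with the red-shift multiplier $N$, the latter recovering control of the energy across the horizon where the Killing energy degenerates. Estimate (b) follows by commuting the equation with the angular Killing fields $\Omega_i$ and applying Sobolev embedding on the orbit spheres, which accounts for the two derivatives entering $E_0$. For the decay statements I would invoke the integrated local energy decay (Morawetz) estimate together with the $r^p$-weighted hierarchy of Dafermos and Rodnianski: because $\beta$ is supported on $\ell > 1$, the Morawetz bulk is coercive away from the photon sphere $r = 3M$, the trapping there being absorbed into the usual degenerate weight. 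Propagating the weighted initial energy $E_1$ through the hierarchy yields the $\tau^{-2}$ energy decay (c), and commuting once more with the $\Omega_i$ before applying Sobolev embedding upgrades this to the $\tau^{-1}$ pointwise decay (d), consistent with the six-derivative norm $E_2$.

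The main obstacle I anticipate is part (1): one must verify that the full space of stationary $\ell = 1$ axial solutions arising from the linearized Einstein equations coincides exactly with the image of the linearized Kerr family under the map defining $\beta$, so that the subtraction is both available and unambiguous modulo the expected residual gauge freedom. The estimates of part (2), by contrast, transcribe the $\alpha$ argument almost verbatim, the essential structural input being the positivity of the Regge-Wheeler potential in (\ref{eq: RW2}) on the $\ell > 1$ modes, which is exactly what renders the Morawetz bulk nonnegative.
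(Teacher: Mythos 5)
Your architecture for part (1) matches the paper's, but the pivotal claim --- that the $\ell=1$ projection $\beta_1$ is stationary --- is asserted rather than derived, and it cannot be derived from the hypotheses you are actually using. The Regge--Wheeler equation (\ref{eq: RW2}) projected onto $\ell=1$ is still a genuine $1+1$ wave equation with a potential and admits time-dependent solutions; stationarity of $\beta_1$ is instead a consequence of the coupled first-order linearized Einstein system, specifically of $R^{(1)}_{r\phi}=0$, i.e.\ $\sqrt{2}r\bar\partial\alpha+\beta_t=0$ (\ref{Rrphi}), combined with the fact that the lowest spin-weighted harmonics satisfy $\partial Y_{11m}=0$, so that $\bar\partial\alpha$ has vanishing $\ell=1$ projection (this is the content of Lemma \ref{beta1t}). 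Without invoking the Einstein system beyond the decoupled wave equation, this step fails. Moreover, once stationarity is in hand, the space of stationary $\ell=1$ solutions of (\ref{eq: RW2}) is two-dimensional, being governed by a second-order radial ODE; the paper solves it explicitly, finds a decaying branch $C_1 r^{-2}\sin^2\theta\,\Psi_{-1}$ and a growing branch with a logarithm, discards the latter by asymptotic flatness, and only then matches the survivor against the linearized Kerr datum (\ref{KerrData}). You correctly flag this identification as the main obstacle, but you supply neither the stationarity argument nor the ODE classification that resolves it.

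For part (2) your scheme is essentially sound but diverges from the paper in the decay step: the paper does not use the $r^p$-weighted hierarchy, but rather the conformal multiplier $Z=u^2\partial_u+v^2\partial_v$ with weight $\omega^Z$, a localized decay lemma, and a dyadic bootstrap, exactly as in the $\alpha$ analysis. Either route yields the $\tau^{-2}$ energy decay; the $r^p$ method is more robust near infinity, while the $Z$ route is what transcribes verbatim from the preceding sections. One point you underplay: the potential $W=r^{-2}(1-8M/r)$ in (\ref{eq: RW2}) is negative for $r<8M$, so even the $T$- and $N$-energy fluxes are not pointwise nonnegative; it is the Poincar\'e inequality (\ref{PoincareBeta}) on the $\ell>1$ modes, $\int_{S^2(r)}|\tilde{\slashed\nabla}\beta|^2\geq\frac{5}{r^2}\int_{S^2(r)}|\beta|^2$, applied after integration over spheres of symmetry, that renders the boundary fluxes positive-definite and the Morawetz bulk coercive --- not positivity of the potential itself. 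This is precisely where the restriction to $\ell>1$ enters, and it must be invoked for the flux terms as well as for the bulk.
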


Using Theorem 2.2, we can normalize and assume $\beta = \beta_{\ell > 1}$, supported away from the lowest spherical mode.  With decay estimates on $\alpha$ and $\beta$, we derive estimates on $\gamma$, and hence decay of the normalized linearized solution $(\alpha, \beta, \gamma)$.  The decay of these three quantities is the content of our main theorem.

\begin{theorem}
Suppose $(\alpha, \beta, \gamma)$ is an axial solution of the linearized Einstein vacuum equations about the Schwarzschild spacetime.  Assume, moreover, that each of the three quantities is smooth and compactly supported on $\{ t = 0 \}$.  Then the following hold:

\begin{enumerate}

\item with the addition of a suitable linearized Kerr solution, the lowest spherical mode $\beta_1$ can be made to vanish, yielding the normalization $\beta = \beta_{\ell > 1}$,

\item each of the three quantities in the normalized solution $(\alpha, \beta, \gamma)$ decay, both in energy and pointwise, through the foliation $\tilde{\Sigma}_{\tau}.$

\end{enumerate}
\end{theorem}

We remark that all the above theorems hold with rougher initial data, with regularity corresponding to the completion of the weighted Sobolev norms specified by the energies (\ref{energies}) and with appropriate decay at infinity.

\section{Axisymmetric Spacetimes}
\subsection{Axisymmetric Coordinates}
Following Chandrasekhar \cite{Chandra1}, our family of axisymmetric spacetimes $(M_{\epsilon}, g_{\epsilon})$ admits the metric expression

\begin{equation}
g_{\epsilon} = -e^{2\nu}(dx^0)^2 + e^{2\mu_2}(dx^2)^2 + e^{2\mu_3}(dx^3)^2 + e^{2\psi}(d\phi -\omega dx^0 - q_2 dx^2 - q_3 dx^3)^2,
\end{equation}

\noindent
where $\nu, \mu_2, \mu_3, \psi, \omega, q_2, q_3$ are functions only of $x^0, x^2, x^3$.  

We further distinguish between the axial perturbations and polar perturbations, decoupling in our study of linearized gravity.  Axial perturbations feature non-vanishing values of $\omega, q_2, q_3$, with $\nu, \mu_2, \mu_3, \psi$ constant in the parameter $\epsilon$ (and hence determined on the Schwarzschild center).  Polar perturbations are just the opposite: $\omega=q_2=q_3 =0$, and the resultant family of diagonal metrics has variation in $\nu, \mu_2, \mu_3, \psi$.  

For further details on the coordinate condition above, see Appendix A.

\subsection{The Axial One-Form $\boldsymbol{\zeta}$}
Key to our study of the axial perturbations are the three quantities $\omega, q_2, q_3$, all appearing in the one-form 

\begin{equation}
\zeta := \omega dx^0 + q_2 dx^2 + q_3 dx^3
\end{equation}

\noindent
within the metric.  Taking the exterior derivative, we have

\begin{equation}
d\zeta = Q_{02} dx^0\wedge dx^2 + Q_{03} dx^0 \wedge dx^3 + Q_{23} dx^2\wedge dx^3,
\end{equation}

\noindent
where 

\begin{align}
Q_{02} := \omega_{2} - q_{2,0},\\ 
Q_{03} := \omega_{3} - q_{3,0},\\
Q_{23} := q_{2,3} - q_{3,2}.
\end{align}

\section{The Schwarzschild Spacetime}
\subsection{The Schwarzschild Metric}
We work primarily within the Schwarzschild exterior, covered by a coordinate patch $(t,r,\theta,\phi)$ with $t\in{\R}, r > 2M, (\theta,\phi)\in{S^2}$.  In these standard Schwarzschild coordinates, the metric has the form

\begin{equation}
 g = -\left(1-\frac{2M}{r}\right)dt^2 + \left(1-\frac{2M}{r}\right)^{-1}dr^2+ r^2d\sigma,
 \end{equation}

\noindent
where $d\sigma = d\theta^2 + \sin^2\theta d\phi^2$ is the round metric on the unit sphere.  Often we use the notation

\begin{align}
\mu & := \frac{2M}{r},\\
\Delta & :=  r^2 - 2Mr.
\end{align}

The form of the metric above indicates that, for an axial perturbation about a central Schwarzschild metric, we have

\begin{align}
\nu = &\frac{1}{2}\log(1-\mu),\\
\mu_2 = &\frac{-1}{2}\log(1-\mu),\\
\mu_3 = &\log r,\\
\psi = & \log (r\sin\theta).
\end{align}

In the course of our analysis, various other coordinate systems will prove useful; we enumerate them below.

First, we have the Regge-Wheeler coordinates, with tortoise coordinate $r_{*}$ normalized as $r_{*} = r + 2M\ln(r-2M) - 3M - 2M\ln(M)$, such that the metric

\begin{equation}
g = -(1-\mu)dt^2 + (1-\mu)dr_{*}^2 + r^2d\sigma
\end{equation}

\noindent
is defined on $t\in{\R}, r_{*}\in{\R}, (\theta, \phi) \in {S^2}$, with $r_{*} = 0$ on the photon sphere $r = 3M$. 

A variant of the above takes $t_{*} = t + 2M\ln(r-2M)$, with 

\begin{equation}
g = -(1-\mu)dt_{*}^2 + 2\mu dt_{*}dr + (1+\mu)dr^2 + r^2d\sigma,
\end{equation}

\noindent
now defined for $(t_{*}, r, \theta, \phi)$ coordinates satisfying $t_{*}\in{\R}, r > 0, (\theta,\phi)\in{S^2}.$

Finally, we make use of the Eddington-Finkelstein double null coordinates, expressed in terms of the Regge-Wheeler coordinates by $ u = \frac{1}{2}(t - r_{*})$ and  $v = \frac{1}{2}(t + r_{*})$, such that

\begin{equation}
g = -4(1-\mu)dudv + r^2d\sigma,
\end{equation}

\noindent
defined for $u\in{\R}, v\in{\R}, (\theta, \phi)\in{S^2}$.

Note that the above coordinate systems cover only a portion of the maximally extended Schwarzschild spacetime.  In particular, to work on the event horizon, corresponding to the coordinate singularity $r = 2M$ in the standard Schwarzschild coordinates, we must employ Kruskal coordinates \cite{Kruskal}.

The Schwarzschild spacetime has a number of Killing fields.  In particular, we have the static Killing field, denoted $T$, and the rotational Killing fields, denoted $\Omega_{i}$, with $i = 1,2,3$.  For convenience in what follows, we collect the rotation Killing fields in the set $\Omega := \{ \Omega_{i} | i = 1,2,3\}$.

For more information on the Schwarzschild spacetime, we direct the reader to the comprehensive references \cite{Wald, Chandra1}.

\subsection{Complex Line Bundles on Schwarzschild Spacetime}
Let $\xi : S^2 \rightarrow \mathbb{C}$ be the stereographic projection.  Expressed in spherical coordinates $(\theta, \phi)$, we have

$$ \xi = \tan \frac{\theta}{2} \cos \phi + i \tan \frac{\theta}{2} \sin \phi. $$

Denote the holomorphic cotangent bundle of $S^2$ by 

\begin{equation}
\mathcal{E}(-2) := \Lambda^{1,0}(S^2).
\end{equation}  

As a line bundle, $\mathcal{E}(-2)$ is spanned by a single section, which we take to be 

\begin{equation}
\psi_{-1} = \frac{1}{\sqrt{2}}(\frac{d \theta}{\sin \theta} + i d\phi).
\end{equation}

Note that the standard Hermitian metric on the sphere gives a cotangent-tangent isomorphism via $d\xi \mapsto g^{\xi \bar{\xi}}\frac{\partial}{\partial \bar{\xi}}$, mapping $\Lambda^{1,0}$ to the anti-holomorphic tangent bundle $T^{0,1}$, which we denote

\begin{equation}
\bar{\mathcal{E}}(2) := T^{0,1}(S^2).
\end{equation}

Using the two, as well as their duals, we can form further complex line bundles by taking repeated tensor products, defined by

\begin{equation}
\mathcal{E}(-2s) := (\mathcal{E}(-2))^{s}
\end{equation}

\noindent
and 

\begin{equation}
\bar{\mathcal{E}}(2s) := (\bar{\mathcal{E}}(2))^{s}.
\end{equation}
The $\partial$ and $\bar{\partial}$ operators map between different line bundle by the following identification:

\begin{equation}
\partial:\ \mathcal{E}(-2s)\longrightarrow\mathcal{E}(-2s)\otimes \Lambda^{1,0}\cong \mathcal{E}(-2s-2)
\end{equation}

\noindent
and 

\begin{equation}
\bar{\partial}:\ \mathcal{E}(-2s)\longrightarrow\mathcal{E}(-2s)\otimes \Lambda^{0,1}\cong \mathcal{E}(-2s+2).
\end{equation}

Later in the analysis, we will be concerned a certain $L^2$ decomposition on the $\mathcal{E}(-2s)$ bundles, into so-called ``spin-weighted spherical harmonics" $Y_{s\ell m}(\theta, \phi)$.  We discuss the details of this decomposition in Appendix \ref{spinHarmonics}.

We extend the $\mathcal{E}$ bundles above to complex line bundles over Schwarzschild spacetime as follows.  Let $\mathcal{L}(-1)$ be the sub-bundle of $\Lambda^{1}_{\mathbb{C}}(Sch)$, for which each fiber is generated by $\frac{d \theta}{\sin \theta} + i d\phi$.   As a line bundle, $\mathcal{L}(-1)$ is spanned by a single section, which we take to be 

\begin{equation}
\Psi_{-1} := \frac{r}{\sqrt{2}}(\frac{d \theta}{\sin \theta} + i d\phi).
\end{equation}

Note that $\mathcal{L}(-1)$ comes equipped with a standard Hermitian structure, as in the round sphere setting.  We regard the Hermitian connection $\slashed{\nabla}$ on $\mathcal{L}(-1)$ as the orthogonal projection of the standard covariant derivative on Schwarzschild onto the subspace spanned by $\Psi_{-1}$. Just as before, we can form further complex line bundles by repeated tensor products. In particular, we shall concern ourselves with the bundles $\mathcal{L}(-1)$ and $\mathcal{L}(-2)$. 

As we are working over a line bundle, the connection is easily characterized.  Namely, for a section $f\Psi_{-1}$ of $\mathcal{L}(-1)$,

\begin{equation}
\slashed{\nabla}_{a}(f\Psi_{-1}) = (\frac{\partial f}{\partial x^{a}} + A_{a}f)\Psi_{-1}, 
\end{equation}

\noindent
with connection one-form

\begin{equation}
A_r = A_t = 0, A_{\theta} = -\cot \theta, A_{\phi} = - i \cos \theta,
\end{equation}

\noindent
extending easily to a computation on $\mathcal{L}(-2)$.

We record the d'Alembertian operators of $\mathcal{L}(-1)$ and $\mathcal{L}(-2)$. For an axial symmetric function $f$, we have

\begin{equation}
\slashed{\Box}_{\mathcal{L}(-1)} (f\Psi_{-1}) = \left[\frac{-r^2}{\Delta}f_{tt} + \frac{\Delta}{r^2}f_{rr} + \frac{\Delta_{r}}{r^2}f_{r} + 
\frac{1}{r^2}(f_{\theta\theta} - \cot\theta f_{\theta} + f)\right]\Psi_{-1},
\end{equation}

\noindent
and

\begin{equation}
\slashed{\Box}_{\mathcal{L}(-2)} (f \Psi_{-1}^2) = \left[\frac{-r^2}{\Delta}f_{tt} + \frac{\Delta}{r^2}f_{rr} + \frac{\Delta_{r}}{r^2}f_{r} + 
\frac{1}{r^2}(f_{\theta\theta} - 3\cot \theta f_{\theta} + 2f) \right]\Psi_{-1}^2,
\end{equation}

\noindent
where we remind the reader $\Delta := r^2 - 2Mr$. Furthermore, the operators $\partial$ and $\bar{\partial}$ can be defined on $\mathcal{L}(-s)$ by computing on each spheres separately. Again let $f$ be an axial symmetric function, we record

\begin{equation}
\partial \left( f\Psi^s_{-1} \right)=\frac{1}{\sqrt{2}r}\sin^{2s+1}\theta\left(\sin^{-2s}\theta f\right)_\theta \Psi^{s+1}_{-1}
\end{equation}

\noindent
and

\begin{equation}
\bar{\partial} \left( f\Psi^{s}_{-1} \right)=\frac{1}{\sqrt{2}r}\sin^{-1}\theta f_\theta \Psi^{s-1}_{-1}.
\end{equation}









\section{Linearized Gravity in the Axial Setting}
\subsection{The Linearized Einstein Equations}
Linearization of the vacuum Einstein equations \eqref{eq: vacuumEinstein} yields just three equations relating the connection terms $Q_{02}, Q_{03},$ and $Q_{23}$, corresponding to the cross-terms with the axial parameter $\phi$.  Specifically, we have

\begin{align}
R_{t\phi}^{(1)} = 0 \rightarrow (r^4\sin^3\theta Q^{(1)}_{02})_{r} + (\frac{r^4}{\Delta}\sin^3\theta Q^{(1)}_{03})_{\theta} = 0,\\
R_{r\phi}^{(1)} = 0 \rightarrow (\Delta \sin^3\theta Q^{(1)}_{23})_{\theta} + (r^4\sin^3\theta Q^{(1)}_{02})_{t} = 0,\\
R_{\theta\phi}^{(1)} = 0 \rightarrow (\Delta \sin^3\theta Q^{(1)}_{23})_{r} - (\frac{r^4}{\Delta}\sin^3\theta Q^{(1)}_{03})_{t} = 0.
\end{align}

Furthermore, as $d^2\zeta = 0$, we have the closed condition

\begin{equation}
d^2\zeta^{(1)} = 0 \rightarrow Q^{(1)}_{02,\theta} - Q^{(1)}_{03,r} + Q^{(1)}_{23,t} = 0.
\end{equation}

For each of the linearized connection terms, we study an associated section in a holomorphic line bundle $\mathcal{L}(-s)$.  Namely, defining

\begin{align}
 \alpha &:= \frac{\Delta}{r^2}\sin^3\theta Q_{23}^{(1)} (\Psi_{-1})^2,\\
 \beta &:= r^2\sin^2\theta Q_{02}^{(1)}\Psi_{-1},\label{betaDef}\\
 \gamma &:= \sin^3\theta Q_{03}^{(1)}(\Psi_{-1})^2,
\end{align}

\noindent
as sections of $\mathcal{L}(-2), \mathcal{L}(-1)$, and $\mathcal{L}(-2)$, respectively, we rewrite the four governing equations above as 

\begin{align}
R_{t\phi}^{(1)} = 0 &\rightarrow (r^2\beta)_{r} + \frac{r^4}{\Delta}\sqrt{2}r\bar{\partial}\gamma = 0,\label{Rtphi}\\
R_{r\phi}^{(1)} = 0 &\rightarrow \sqrt{2}r\bar{\partial}\alpha + \beta_{t},\label{Rrphi}\\
R_{\theta\phi}^{(1)} = 0 &\rightarrow (r^2\alpha)_{r} -\frac{r^4}{\Delta}\gamma_{t} = 0,\label{Rthetaphi}\\
d^2\beta^{(1)} = 0 &\rightarrow \frac{1}{r^2}\sqrt{2}r\partial\beta - \gamma_{r} + \frac{r^2}{\Delta}\alpha_{t} = 0.\label{betaClosed}
\end{align}

\subsection{Description of the Kernel}

The Kerr solution splits at the linear level into two parts: a polar part giving change of mass, and an axial part giving change in angular momentum.  For this axial portion, direct examination of the metric shows 

\begin{align}
\begin{split}
q_2^{(1)} &= q_3^{(1)} = 0,\\
\omega^{(1)} &= \frac{2Ma^{(1)}}{r^3},
\end{split}
\end{align}

\noindent
such that

\begin{align}
\begin{split}
Q_{03}^{(1)} &= Q_{23}^{(1)} = 0,\\
Q_{02}^{(1)} &= \frac{-6Ma^{(1)}}{r^4},
\end{split}
\end{align}

\noindent
and

\begin{align}\label{KerrData}
\begin{split}
\alpha &= \gamma = 0,\\
\beta &= \frac{-6Ma^{(1)}}{r^2}\sin^2\theta \Psi_{-1}.
\end{split}
\end{align}

Note that our condition of a fixed axis of symmetry for the perturbation simplifies our description of linearized Kerr considerably, in comparison with \cite{DHR}.

The pure gauge amounts to taking a change of coordinates 

\begin{align*}
\tilde{t} &= t + \epsilon f_0(t,r,\theta), \\
\tilde{\phi} &= \phi + \epsilon f_1(t,r,\theta),\\
\tilde{r} &= r + \epsilon f_2(t,r,\theta),\\
\tilde{\theta} &= \theta + \epsilon f_3(t,r,\theta).
\end{align*}

\noindent
The assumption of axisymmetry,  the form of the axial metric at the linear level, and the choice of a Schwarzschild base metric, taken together, yield $f_0 = f_2 = f_3 = 0$ straightaway.  This being the case, the gauge change corresponds to a function $f_1(t, r, \theta)$ such that $df_1 = \zeta^{(1)}$.  Exactness of $\zeta^{(1)}$ implies vanishing of the connection terms, and pure gauge solutions are seen to be trivial.
 
\section{Decoupling of $\alpha$}
The four governing equations in the previous section allow for separation of $\alpha$, described below.  Applying the operator $\sqrt{2}\partial$ to (\ref{Rrphi}) and differentiating (\ref{Rthetaphi}) in $r$ and (\ref{betaClosed}) in $t$, we find

\begin{gather*}
2\partial\bar\partial \alpha + \frac{\sqrt{2}}{r}\partial \beta_{t} = 0,\\
\left(\frac{\Delta}{r^4}(r^2\alpha)_{r}\right)_{r} - \gamma_{t,r} = 0,\\
-\frac{\sqrt{2}}{r}\partial \beta_{t} + \gamma_{t,r} - \frac{r^2}{\Delta}\alpha_{tt} = 0.
\end{gather*}

Adding the three equations together, we isolate $\alpha$:

$$ \frac{-r^2}{\Delta} \alpha_{tt} + \left(\frac{\Delta}{r^4}(r^2\alpha)_{r}\right)_{r} + 2\partial\bar\partial\alpha = 0.$$

Expanding and noting the relation between the Laplace-Beltrami and Laplace-de Rham operators

$$ \Delta_{\mathcal{E}(-2s)} = \partial\bar\partial + s,$$

\noindent
we find

\begin{equation}
\frac{-r^2}{\Delta}\alpha_{tt} + \frac{\Delta}{r^2}\alpha_{rr} + \frac{\Delta_{r}}{r^2}\alpha_{r} + \Delta_{\mathcal{E}(-4)}\alpha = V\alpha,
\end{equation}

\noindent
where $V = \frac{4}{r^2}(1-\mu)$.  Rewritten in this form, $\alpha$ is seen to be a solution of a Regge-Wheeler equation

\begin{equation}\label{eq: RW1}
\slashed{\Box}_{\mathcal{L}(-2)} \alpha = V\alpha
\end{equation}

\noindent
with potential $V = \frac{4}{r^2}(1-\mu) = \frac{4}{r^2}\left(1-\frac{2M}{r}\right).$

\section{Analysis of $\alpha$}

\subsection{Stress-Energy Formalism}
Our analysis relies upon the vector field multiplier method, outlined in Appendix \ref{stressEnergy}.

Associated with our wave equation is the stress-energy tensor:

\begin{equation}
T_{ab}[\alpha] := \frac{1}{2}(\slashed{\nabla}_a \alpha \slashed{\nabla}_b \bar{\alpha} + \slashed{\nabla}_a \bar{\alpha} \slashed{\nabla}_b \alpha) - \frac{1}{2}g_{ab}(\slashed{\nabla}^{c} \alpha \slashed{\nabla}_{c}\bar{\alpha} + V|\alpha|^2).
\end{equation}

Applying a vector field multiplier $C^{b}$, we define the energy current

\begin{equation}
J^{C}_{a}[\alpha] := T_{ab}[\alpha]C^{b}
\end{equation}  

\noindent
and the density

\begin{equation}
K^{C}[\alpha] := \nabla^{a}J^{C}_{a}[\alpha] = \nabla^{a}(T_{ab}[\alpha]C^{b}).
\end{equation}

As well, we will have occasion to use the weighted energy current

\begin{equation}
J^{C,\omega^{C}}_{a} := J^{C}_{a} + \frac{1}{4}\omega^{C}\nabla_{a}|\alpha|^2 -\frac{1}{4}\nabla_{a}\omega^{C}|\alpha|^2, 
\end{equation}

\noindent
with weighted density

\begin{equation}
K^{C,\omega^{C}} := K^{C} + \frac{1}{4}\omega^{C}\Box |\alpha|^2 -\frac{1}{4} \Box \omega^{C} |\alpha|^2,
\end{equation}

\noindent
for a suitable scalar weight function $\omega^{C}$.

We typically suppress the $\alpha$ dependence in the notation above when there is no chance of confusion.

The current $J^{C}_{b}$ and density $K^{C}$ serve as a convenient notation to express the spacetime Stokes' theorem

\begin{equation}
\int_{\mathcal{\partial D}} J^{C}_{a}\eta^{a} = \int_{\mathcal{D}} K^{C},
\end{equation} 

\noindent
the primary tool in proving the estimates that follow.

Finally, we observe that, in contrast with the wave equation, the stress-energy tensor $T_{ab}$ has non-trivial divergence

\begin{equation}
\nabla^{a}T_{ab} = \frac{-1}{2}\nabla_{b}V|\alpha|^2 + \frac{1}{2}\slashed{\nabla}^{a}\alpha[\slashed{\nabla}_{a},\slashed{\nabla}_{b}]\bar{\alpha} + \frac{1}{2}\slashed{\nabla}^{a}\bar{\alpha}[\slashed{\nabla}_{a},\slashed{\nabla}_{b}]\alpha,
\end{equation}

\noindent
where we note crucially that the commutator $[\slashed{\nabla}_{a},\slashed{\nabla}_{b}]$ vanishes when contracted with a multiplier invariant under the members of $\Omega$.  In particular, all such multipliers considered in the analysis below have this property.

The notation for our covariant derivative $\slashed{\nabla}$, usually reserved for the angular derivative, presents some chance of confusion.  In particular, we are careful to distinguish the spacetime gradient 

$$ \slashed{\nabla}^{c}\alpha \slashed{\nabla}_{c}\bar\alpha,$$

\noindent
contracting over all spacetime indices, and the angular gradient

\begin{equation}
|\tilde{\slashed{\nabla}}\alpha|^2 = r^{-2}\left( |\slashed{\nabla}_\theta\alpha|^2+\sin^{-2}\theta|\slashed{\nabla}_\phi\alpha|^2 \right).
\end{equation}

As well, we remark that, when integrating over a spacelike hypersurface $\Sigma$, we often use $\nabla_{\Sigma}$ to refer to the associated gradient.

\subsection{Degenerate Energy}
Applying the Killing multiplier $T = \partial_{t}$ over the spacetime region bounded by time slices $\{ t = \tau'\}$ and $\{ t = \tau \}$, we obtain a conservation 
law

$$ \int_{\{t = \tau \}} J^{T}_{a}\eta^{a} = \int_{\{t = \tau' \}} J^{T}_{a}\eta^{a},$$

\noindent
with $\eta^{a}$ being the appropriate unit normal.

Note that the density $K^{T}$ vanishes in this case as a consequence of $V$ being radial, such that $(\nabla^{a}T_{ab})T^{b}\sim{T(V)|\alpha|^2}$ vanishes, and $T$ being Killing, such that $\pi^{T}$ vanishes.

Defining the $T$-energy by

\begin{equation}
E^{T}_{\alpha}(\tau) := \int_{\{t = \tau\}} J^{T}_{a}\eta^{a},
\end{equation}

\noindent
our conservation law is nothing more that the statement

\begin{equation}\label{eq: ConservationLaw}
E_{\alpha}^{T}(\tau) = E_{\alpha}^{T}(\tau')
\end{equation}

\noindent
for all $\tau$ and $\tau'$. 

The trouble with the energy above is that it degenerates near the event horizon, where the multiplier $T$ becomes null.  To remedy this, we rely upon the red-shift multiplier of Dafermos and Rodnianski \cite{DR}, introduced in the next subsection.
 
 \subsection{Non-degenerate Energy}
 
The degeneracy in the $T$-energy can be rectified by recourse to the red-shift multiplier $N$, introduced by Dafermos and Rodnianski in \cite{DR}.  We recount the essential details here, with a fuller treatment in Append \ref{redShift}.  The multiplier $N$ is strictly future-directed time-like on the exterior, up to and on the horizon, and so yields a positive-definite energy flux through space-like hypersurfaces, in contrast with the Killing multiplier $T$.  Distinguishing it from other such choices of future-directed time-like multipliers, however, is the coercive control  

$$ K^{N} \geq c J^{N}_{a}N^{a}, $$

\noindent
for some small $c > 0$, near the event horizon.

Comparing our situation to that of the wave equation, dealt with in the appendix, we note that the pointwise estimate

$$ T_{ab}\nabla^{a}N^{b} \geq{ c J^{N}_{a}N^{a}},$$

\noindent
still holds in our setting, as a consequence of non-trivial surface gravity $\kappa$ and tensorial algebra on $T_{ab}$.  In addition, the divergence term

$$ (\nabla^{a}T_{ab})N^{b} = \frac{-1}{2}N(V)|\alpha|^2 \geq 0,$$

\noindent
is non-negative, as the potential $V$ increases radially near the event horizon.  Taken together, the two yield the desired estimate

$$ K^{N} \geq {T_{ab}\nabla^{a}N^{b}} \geq{c J^{N}_{a}N^{a}}.$$

Extending to the exterior, we construct a strictly timelike multiplier $N$, satisfying the estimates
 
 \begin{align}\label{Nestimates}
 K^{N} &\geq{c J^{N}_{a}N^{a}},\ &&\textup{for}\ 2M\leq r \leq r_0,\\
 J^{N}_{a}N^{a} &\sim J^{T}_{a}T^{a}, \ &&\textup{for}\ r_0 \leq r \leq R_0,\\
 |K^{N}| &\leq  C|J^{T}_{a}T^{a}|, \ &&\textup{for}\ r_0 \leq r \leq R_0,\\
 N &= T, \ &&\textup{for}\ r\geq R_0,
 \end{align}
 
 \noindent
 for some fixed $2M < r_0 < R_0 < \infty$.

Denoting

\begin{equation}
\Sigma_{\tau} := \{ t_{*} = \tau \},
\end{equation}

\noindent
where we remind the reader that $t_{*} = t + 2M\ln(r-2M)$, we define the non-degenerate $N$-energy
 
\begin{equation}
E^{N}_{\alpha}(\Sigma_{\tau}) :=  \int_{\Sigma_{\tau}} J^{N}_{a}\eta^{a}.
\end{equation}

\subsection{Uniform Boundedness}
 
Having adapted the red-shift multiplier $N$, we obtain a uniform energy bound, spelled out in the theorem below.
 
\begin{theorem}\label{EnergyBoundA}
Suppose $\alpha$ is a solution of (\ref{eq: RW1}), smooth and compactly supported on the space-like hypersurface $\Sigma_0$.   Then, for $\tau \geq 0$, $\alpha$ satisfies the uniform energy estimate
 
\begin{equation}
E^{N}_{\alpha}(\Sigma_{\tau}) \leq CE^{N}_{\alpha}(\Sigma_0).
\end{equation}

\end{theorem}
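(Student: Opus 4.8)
The plan is to run the Dafermos--Rodnianski red-shift argument for the Regge--Wheeler equation \eqref{eq: RW1}, adapting the scalar-wave estimates of Appendix \ref{redShift} to the section $\alpha$. The starting point is the spacetime Stokes' identity for the energy current $J^N_a[\alpha]$ applied over the region $\mathcal{D}(0,\tau)$ bounded below by $\Sigma_0$, above by $\Sigma_\tau$, and to the past by the horizon segment $\mathcal{H}^+(0,\tau)$. Since $\alpha$ is compactly supported on $\Sigma_0$, finite speed of propagation guarantees compact support on each $\Sigma_{\tau'}$ and hence no flux at null infinity, yielding the clean identity $E^N_\alpha(\Sigma_\tau) + \mathcal{F}_{\mathcal{H}^+} + \int_{\mathcal{D}(0,\tau)} K^N = E^N_\alpha(\Sigma_0)$, with $\mathcal{F}_{\mathcal{H}^+}$ the horizon flux. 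Because $N$ is future-directed timelike up to and including the horizon, $\mathcal{F}_{\mathcal{H}^+} \ge 0$ and may be discarded for an upper bound.

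The next step is to control the bulk term $\int_{\mathcal{D}} K^N$ by decomposing $\mathcal{D}(0,\tau)$ according to the radial ranges of \eqref{Nestimates}. For $r \ge R_0$ one has $N = T$, and since $T$ is Killing and $V$ is radial the density $K^T$ vanishes identically, so there is no contribution. In the near-horizon region $2M \le r \le r_0$ the red-shift coercivity $K^N \ge c\, J^N_a N^a \ge 0$ holds---reinforced by the non-negative divergence contribution $(\nabla^a T_{ab})N^b = -\tfrac{1}{2}N(V)|\alpha|^2 \ge 0$---so this region carries the \emph{favorable} sign and supplies a coercive spacetime term $c\int_{\mathcal{D}\cap\{r\le r_0\}} J^N_a N^a$ on the left-hand side. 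The only potentially harmful contribution is the intermediate region $r_0 \le r \le R_0$, where the best available bound is $|K^N| \le C\,|J^T_a T^a|$.

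To handle the intermediate region I would invoke the conservation of the degenerate energy \eqref{eq: ConservationLaw}, which gives $E^T_\alpha(\tau') \le E^T_\alpha(0) \le C\, E^N_\alpha(\Sigma_0)$ uniformly in $\tau'$; since $r_0 \le r \le R_0$ is bounded away from the horizon, $J^T_a T^a$ is there a non-degenerate density comparable to $J^N_a N^a$, and a co-area reduction expresses the bulk integral as a time integral of per-slice energies over this compact shell. The difficulty---and the main obstacle---is that this shell integral, taken over the full slab $[0,\tau]$, a tube of fixed radial width but growing temporal length, is only bounded by $C\tau\, E^N_\alpha(\Sigma_0)$ by the conservation law alone, which would give linear growth rather than uniform boundedness. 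The resolution, which is the heart of the Dafermos--Rodnianski mechanism, is to play the coercive red-shift term from the near-horizon region against this growth: using the structural decomposition $E^N_\alpha(\Sigma_{\tau'}) \lesssim E^T_\alpha(0) + \int_{\Sigma_{\tau'}\cap\{r\le r_0\}} J^N_a N^a$, a pigeonhole argument over unit-length time slabs selects, in each slab, a slice on which the near-horizon energy is dominated by the slab's coercive bulk term, and iterating this selection closes the estimate with no loss of powers of $\tau$.

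Finally, I would note that the entire argument is insensitive to the fact that $\alpha$ is a section rather than a scalar: the only places where this could matter are the divergence of $T_{ab}[\alpha]$ and the commutator $[\slashed\nabla_a,\slashed\nabla_b]$, and the latter was already observed to vanish when contracted with the $\Omega$-invariant multipliers $T$ and $N$ employed here. Thus the tensorial algebra reproduces exactly the scalar-wave red-shift estimates, and the uniform bound $E^N_\alpha(\Sigma_\tau) \le C\, E^N_\alpha(\Sigma_0)$ follows.
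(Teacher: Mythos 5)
Your proposal is correct and takes essentially the same route as the paper: divergence theorem over the slab between $\Sigma_{\tau'}$ and $\Sigma_{\tau}$, discarding the good-signed horizon flux, red-shift coercivity in $2M\leq r\leq r_0$, vanishing of $K^{T}$ for $r\geq R_0$, and monotonicity of the degenerate $T$-energy to control the intermediate shell $r_0\leq r\leq R_0$, whose linear-in-$\tau$ growth is absorbed by the coercive near-horizon bulk term. The paper closes via the integral inequality $E^{N}_{\alpha}(\Sigma_{\tau}) + c\int_{\tau'}^{\tau}E^{N}_{\alpha}(\Sigma_{s})\,ds \leq C(\tau-\tau') + E^{N}_{\alpha}(\Sigma_{\tau'})$ rather than your pigeonhole over unit time slabs, but these are interchangeable formulations of the same Dafermos--Rodnianski mechanism.
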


\begin{proof}
The proof proceeds just as in \cite{DRClay}.  Denote by $\mathcal{R}(\tau', \tau)$ the spacetime region between the hypersurfaces $\Sigma_{\tau'}$ and $\Sigma_{\tau}$.  Concretely, $\mathcal{R}(\tau', \tau) = (J^{+}(\Sigma_{\tau'})\setminus J^{+}(\Sigma_{\tau}))\cap J^{-}(\mathcal{I}^{+})$.

Applying the divergence theorem in the spacetime region $\mathcal{R}(\tau', \tau)$, we find

 \begin{align*}
&\int_{\Sigma_{\tau}\cap\{2M\leq r\leq r_0\}}J^N_{a} \eta_{\tau}^{a}+ \int_{\Sigma_{\tau}\cap{\{r\geq{r_0}\}}}J^{N}_{a}\eta_{\tau}^{a} + \int_{\mathcal{R}(\tau', \tau)\cap{\{2M \leq r \leq r_0\}}}K^N\\
\leq &\int_{\Sigma_{\tau'}\cap\{2M\leq r\leq r_0\}}J^N_{a} \eta_{\tau'}^{a}+\int_{\Sigma_{\tau'}\cap\{r\geq{r_0}\}}J^N_{a} \eta_{\tau'}^{a}+\int_{\mathcal{R}(\tau',\tau)\cap\{r_0\leq r\leq R_0\}}|K^Y|,
\end{align*}

\noindent
as the event horizon term has a good sign.

Utilizing as well monotonicity of the $T$-energy $E_{\alpha}^{T}(\Sigma_{\tau})$ and \eqref{Nestimates}, we deduce the integral inequality

$$ E_{\alpha}^{N}(\Sigma_{\tau}) + c\int_{\tau'}^{\tau} E_{\alpha}^{N}(\Sigma_{s}) ds \leq C(\tau - \tau') + E_{\alpha}^{N}(\Sigma_{\tau'}). $$

This inequality implies the uniform energy bound

$$ E_{\alpha}^{N}(\Sigma_{\tau}) \leq C E_{\alpha}^{N}(\Sigma_0),$$

\noindent
where we emphasize that the $N$-energy is positive-definite. 

\end{proof}

Indeed, this estimate extends to more general, non-compactly supported solutions $\alpha$ to the Regge-Wheeler equation \eqref{eq: RW1}.  As well, the above proves to be the case with any reasonable family of hypersurfaces $\Sigma_{\tau}$, formed by translation of an initial space-like hypersurface $\Sigma_0$, where $\Sigma_0$ passes through $\mathcal{H}^{+}$ away from the bifurcation sphere and where we have some uniform control over $ -g_{ab}T^{a}\eta^{b}$, now with $\eta^{b}$ being the normal to $\Sigma_0$.
 
Using the Killing fields of Schwarzschild and certain Sobolev inequalities, it is possible to strengthen the uniform energy estimate above to a statement of uniform boundedness, as follows.

First, let us note that the Killing fields $T$ and $\Omega_{i}$ commute with $\slashed{\Box}$, and, even more, our potential function $V = \frac{4}{r^2}(1-\frac{2M}{r})$ is purely radial.  Taken together, the two show that, for $\alpha$ a solution, so too are the Lie derivatives $\mathcal{L}_{T}\alpha, \mathcal{L}_{\Omega_{i}}\alpha$, and likewise for higher derivatives.  This being the case, the energy estimate just derived carry over for these higher derivatives.  As a shorthand, we denote

\begin{align}
T\alpha := & \mathcal{L}_{T}\alpha,\\
\Omega_{i}\alpha := & \mathcal{L}_{\Omega_{i}}\alpha,\\
\Omega_{i}\Omega_{j}\alpha := & \mathcal{L}_{\Omega_{i}}\mathcal{L}_{\Omega_{j}}\alpha,
\end{align}

\noindent
and likewise for higher derivatives.

\begin{theorem}\label{PointwiseBoundA}
For $\alpha$ satisfying the hypotheses of Theorem \ref{EnergyBoundA} above, we have the pointwise bound

\begin{equation}
||\alpha||_{C^{0}(\mathcal{D})}  \leq {C\left(E^{N}_{\alpha}(\Sigma_0) + \sum_{i =1}^{3}E^{N}_{\Omega_{i}\alpha}(\Sigma_0) + \sum_{i,j = 1}^{3} E^{N}_{\Omega_{i}\Omega_{j}\alpha}(\Sigma_0)\right)},
\end{equation}

\noindent
with $\mathcal{D} = J^{+}(\Sigma_0)\cap{J^{-}(\mathcal{I}^{+})}$ being the exterior spacetime region foliated by the non-negative $\Sigma_{\tau} = \{ t_{*} = \tau\}$ slices.

\end{theorem}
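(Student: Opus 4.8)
The plan is to reduce the pointwise bound to the uniform energy estimate of Theorem \ref{EnergyBoundA} by commuting with the angular Killing fields and then invoking a Sobolev embedding on the spheres of symmetry. Since the fields $\Omega_i$ commute with $\slashed{\Box}_{\mathcal{L}(-2)}$ and the potential $V$ is radial, the Lie derivatives $\Omega_i\alpha$ and $\Omega_i\Omega_j\alpha$ are again solutions of the Regge-Wheeler equation (\ref{eq: RW1}), each smooth and compactly supported on $\Sigma_0$. Theorem \ref{EnergyBoundA} therefore applies verbatim to each of them, yielding $E^N_{\Omega^{(m)}\alpha}(\Sigma_\tau)\le C\,E^N_{\Omega^{(m)}\alpha}(\Sigma_0)$ for all $\tau\ge 0$ and every angular multi-index with $(m)\le 2$. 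It thus suffices to prove the fixed-slice bound $\sup_{\Sigma_\tau}|\alpha|^2\le C\sum_{(m)\le 2}E^N_{\Omega^{(m)}\alpha}(\Sigma_\tau)$, with $C$ independent of $\tau$, since every point of $\mathcal{D}$ lies on some slice $\Sigma_\tau$ with $\tau\ge0$.

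To obtain this slice bound I would first fix a sphere $S^2_{r,\tau}\subset\Sigma_\tau$ and apply the Sobolev embedding $H^2(S^2)\hookrightarrow C^0(S^2)$ on the round sphere, which controls $\sup_{S^2_{r,\tau}}|\alpha|^2$ by $\sum_{(m)\le 2}\|\Omega^{(m)}\alpha\|^2_{L^2(S^2,\,d\sigma)}$, the $\Omega_i$ generating the rotation action. Writing each unit-sphere norm as $\|\Omega^{(m)}\alpha\|^2_{L^2(d\sigma)}(r)=-\int_r^\infty \partial_{r'}\|\Omega^{(m)}\alpha\|^2_{L^2(d\sigma)}\,dr'$ — legitimate because compact support on $\Sigma_0$ together with finite speed of propagation forces $\alpha$ to have compact $r$-support on each $\Sigma_\tau$, so the boundary term at infinity vanishes — and applying Cauchy-Schwarz to the integrand, one bounds $\sup_{S^2_{r,\tau}}|\alpha|^2$ by a sum over $(m)\le 2$ of slice integrals of the schematic form $\int_{\Sigma_\tau}\bigl(|\Omega^{(m)}\alpha|^2+|\partial_{r}\Omega^{(m)}\alpha|^2\bigr)\,d\sigma\,dr$. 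It then remains to match each of these integrals against the appropriate $N$-energy flux: the radial term is exactly the (non-degenerate) transverse-derivative contribution to $E^N_{\Omega^{(m)}\alpha}(\Sigma_\tau)$, while the undifferentiated term is treated as described below. Combining with the first paragraph transfers the bound to $\Sigma_0$.

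The main obstacle is reconciling the weights so that the constant is uniform over the entire exterior, in particular up to and on the event horizon, where both the Killing energy density and the potential $V=\tfrac{4}{r^2}(1-\mu)$ degenerate. Two ingredients resolve this. First, the radial-derivative term $\int_{\Sigma_\tau}|\partial_r\Omega^{(m)}\alpha|^2$ is controlled without degeneration precisely because we use the red-shift flux $E^N$ rather than $E^T$; this is the role of the multiplier $N$ and its coercivity $K^N\ge cJ^N_aN^a$ near $r=2M$. Second, the undifferentiated term $\int_{\Sigma_\tau}|\alpha|^2\,d\sigma\,dr$ cannot be absorbed into the potential contribution of the energy near the horizon, where $V\to0$; instead I would exploit that $\alpha$ is a section of $\mathcal{L}(-2)$, whose spin-weighted spherical harmonic expansion (Appendix \ref{spinHarmonics}) is supported on $\ell\ge 2$. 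The resulting spectral gap yields a Poincar\'e inequality $\|\alpha\|^2_{L^2(S^2,d\sigma)}\le C\sum_i\|\Omega_i\alpha\|^2_{L^2(S^2,d\sigma)}$ on every sphere, trading the undifferentiated term for the angular-gradient contribution to $E^N_\alpha$, which is positive-definite everywhere, including on the horizon. The same device bounds each higher undifferentiated term $\int_{\Sigma_\tau}|\Omega^{(m)}\alpha|^2$ by the angular-gradient flux of $\Omega^{(m')}\alpha$ with $(m')\le 2$, so that no more than two angular derivatives ever appear. Assembling these estimates, taking the supremum over $r$ and over $\tau\ge0$, and invoking the energy boundedness of the first paragraph completes the proof.
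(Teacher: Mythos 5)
Your proposal is correct and follows the same overall strategy as the paper: commute with the rotation fields, apply Theorem \ref{EnergyBoundA} to the commuted quantities, and recover the supremum from a radial fundamental-theorem-of-calculus argument combined with Sobolev embedding on the spheres of symmetry. The one substantive difference lies in the order of operations and its consequence. The paper integrates radially first, writing $|\alpha(\rho,\omega)|\le\int_\rho^\infty|\slashed{\nabla}_{\rho}\alpha|\,d\rho'$ and applying Cauchy--Schwarz with the weights $(\rho')^{2}$ and $(\rho')^{-2}$; this produces only gradient terms weighted by the slice volume element, so the undifferentiated $L^2$ term you worry about never appears, and as a bonus the same computation yields the sharper bound $|\alpha|\le C/\sqrt{r}$, which the paper records. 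Your version --- Sobolev on the sphere first, then an unweighted radial integration --- generates the zeroth-order terms $\int_{\Sigma_\tau}|\Omega^{(m)}\alpha|^2$, which, as you correctly observe, cannot be absorbed into the potential contribution of the $N$-flux near the horizon where $V\to0$. Your fix via the spherical Poincar\'e inequality is legitimate: $\alpha$ and its $\Omega$-commutations remain supported on $\ell\ge2$ since the $\Omega_i$ preserve the eigenspaces, and the factor $r^2$ supplied by the inequality exactly restores the volume weight needed to compare with the angular-gradient part of the non-degenerate flux, all while staying at commutation level $(m)\le2$. Both routes are valid; the paper's weighted Cauchy--Schwarz is the more economical, avoiding the Poincar\'e input (which the paper reserves for the Morawetz estimate) and giving the extra radial decay for free.
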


\begin{proof}
As mentioned earlier, the non-degenerate energy estimate just derived carries over for for higher derivatives in Killing directions.  In particular,

\begin{align}
E^{N}_{\Omega_{i}\alpha}(\Sigma_{\tau})  \leq & C E^{N}_{\Omega_{i}\alpha}(\Sigma_0) \\
E^{N}_{\Omega_{i}\Omega_{j}\alpha}(\Sigma_{\tau}) \leq & C E^{N}_{\Omega_{i}\Omega_{j}\alpha}(\Sigma_0)
\end{align}

With these higher derivative estimates in hand,  the Sobolev embedding theorems, and subsequent recovery of a pointwise estimate, apply as follows.

 Let $\rho$ be a geodesic radial coordinate on $\Sigma_\tau$ and $\rho=1$ on the horizon. Then for any $\omega\in S^2$,
 
\begin{align*}
|\alpha(\rho,\omega)|&\leq \int_\rho^\infty |\slashed{\nabla}_{\rho}\alpha|(\rho',\omega)d\rho'\\
&\leq\left(\int_\rho^\infty |\slashed{\nabla}_{\rho}\alpha|^2(\rho',\omega)(\rho')^2d\rho'\right)^{1/2}\left(\int_\rho^\infty(\rho')^{-2}d\rho'\right)^{1/2}\\
&\leq C\left(\int_\rho^\infty |\slashed{\nabla}_{\rho}\alpha|^2(\rho',\omega)(\rho')^2d\rho'\right)^{1/2}.
\end{align*}

The Sobolev inequality on $S^2$ yields the estimate

\begin{align*}
|\slashed{\nabla}_{\rho}\alpha|^2(\rho',\omega)\leq C\int_{S^2}\left[ |\slashed{\nabla}_{\rho}\alpha|^2+\sum_{i = 1}^{3}|\slashed{\nabla}_{\rho}\Omega_{i}\alpha|^2+\sum_{i,j=1}^{3}|\slashed{\nabla}_{\rho}\Omega_{i}\Omega_{j}\alpha|^2 \right]d\sigma,
\end{align*}

\noindent
where $d\sigma$ is the area form of $S^2$. Inserting into the first, we find

\begin{align*}
|\alpha(\rho,\omega)|^2&\leq C\left( \int_\rho^\infty \int_{S^2} (\rho')^2 d\rho' d\sigma \left[|\nabla_{\Sigma_\tau} \alpha|^2+\sum_{i=1}^{3}|\nabla_{\Sigma_\tau} \Omega_{i}\alpha|^2+\sum_{i,j=1}^{3}|\nabla_{\Sigma_\tau}\Omega_{i}\Omega_{j}\alpha|^2\right] \right)\\
&\leq C\left(E^{N}_{\alpha}(\Sigma_0) + \sum_{i =1}^{3}E^{N}_{\Omega_{i}\alpha}(\Sigma_0) + \sum_{i,j = 1}^{3} E^{N}_{\Omega_{i}\Omega_{j}\alpha}(\Sigma_0)\right).
\end{align*}

It is clear from the proof that, in fact, one has $|\alpha|\leq C/\sqrt{r}$.
\end{proof}

 \subsection{The Morawetz Multiplier $X$}
 
Next we deduce a degenerate form of integrated decay, wherein we bound a spacetime integral of various weighted $L^2$ quantities by initial data. 
 
Let $X = f(r_{*})\partial_{r_{*}}$ be a generic radial multiplier, where we use the Regge-Wheeler coordinate $r_{*}$ specified in the first section.  Further, denote by $g'$ the differential of a function $g$ in $r_{*}$.

We calculate the pieces of the density $K^{X}$ to be

$$T^{ab}\pi^{X}_{ab} = \frac{f'}{1-\mu}|\slashed{\nabla}_{r_{*}}\alpha|^2 + \frac{f}{r}(1-\frac{3M}{r})|\tilde{\slashed{\nabla}}\alpha|^2-\frac{1}{4}(2f' + 4f\frac{1-\mu}{r})\slashed{\nabla}^{c}\alpha \slashed{\nabla}_{c}\bar{\alpha} - \frac{1}{2}(f' + \frac{2f}{r}(1-\frac{M}{r}))V|\alpha|^2,$$

$$(\nabla^{a}T_{ab})X^{b} = \frac{-1}{2}fV'|\alpha|^2 .$$

Combining the previous two, we find  

\begin{multline}
 K^{X} = \frac{f'}{1-\mu}|\slashed{\nabla}_{r_{*}}\alpha|^2 + \frac{f}{r}(1-\frac{3M}{r})|\tilde{\slashed{\nabla}}\alpha|^2\\
- \left[\frac{1}{2}(f' + 2f\frac{1-\mu}{r})\slashed{\nabla}^{c}\alpha\slashed{\nabla}_{c}\bar{\alpha} + \frac{1}{2}(f' + \frac{2f}{r}(1-\frac{M}{r}))V|\alpha|^2+\frac{1}{2}fV'|\alpha|^2\right].
\end{multline}

Note that the identity 

\begin{equation}
\Box{|\alpha|^2} = 2V|\alpha|^2 + 2\slashed\nabla^{c}\alpha\slashed\nabla_{c}\bar{\alpha}
\end{equation}

\noindent
holds in our bundle setting.  Moreover, we can encode integration by parts on the d'Alembertian term by using the weighted energy current

\begin{equation}
J^{X,\omega^{X}}_{a} := J^{X}_{a} + \frac{1}{4}\omega^{X}\nabla_{a}|\alpha|^2 -\frac{1}{4}\nabla_{a}\omega^{X}|\alpha|^2, 
\end{equation}

\noindent
with density

\begin{equation}
K^{X,\omega^{X}} := K^{X} + \frac{1}{4}\omega^{X}\Box |\alpha|^2 -\frac{1}{4} \Box \omega^{X} |\alpha|^2.
\end{equation}

Inserting the weight 

\begin{equation}\label{eq: Xweight}
\omega^{X} := f' + 2f\frac{1-\mu}{r}
\end{equation}

\noindent
and the potential $V = \frac{4}{r^2}(1-\mu)$, we find after some simplification
 
\begin{equation}
K^{X,\omega^{X}} =  \frac{f'}{1-\mu}|\slashed{\nabla}_{r_{*}}\alpha|^2 + \frac{f}{r}(1-\frac{3M}{r})|\tilde{\slashed{\nabla}}\alpha|^2
+ \left[\frac{-Mf}{r^2}V -\frac{1}{2}fV' -\frac{1}{4}\Box \omega^{X} \right]|\alpha|^2.
\end{equation}

Our objective is a choice of $f$ yielding a positive bulk term.  As it turns out, such an estimate becomes clear not when integrating over an entire spacetime region $\mathcal{D}$, but over spheres of symmetry within.  

A cursory examination of the above reveals that such an $f$ ought to have a positive $r_{*}$ derivative, and moreover ought to vanish at the photon sphere $r = 3M$ to odd order.  Even with these conditions satisfied, the search for an $f$ yielding a positive coefficient on the $L^2$ term $|\alpha|^2$ seems hopeless.  What is required is a borrowing from the spherical $L^2$-gradient term, i.e. a Poincar\'{e}-type inequality, as in the argument of Alinhac \cite{Alinhac1} for higher modes of the scalar wave and that of Dafermos-Holzegel-Rodnianski \cite{DHR}.  

Letting

\begin{equation}\label{eq: XmultDef}
f := \Big{(}1+\frac{M}{r}\Big{)}^2\Big{(}1-\frac{3M}{r}\Big{)},
\end{equation}

\noindent
we compute

\begin{equation}
f' = \frac{M}{r^2}(1-\mu)\Big{(}1+\frac{M}{r}\Big{)}\Big{(}1+\frac{9M}{r}\Big{)},
\end{equation}

\noindent
and note that both $f$ and $f'$ are uniformly bounded on $r \geq 2M$, with $f'$ positive away from the event horizon.

We calculate the weighted density $K^{X,\omega^{X}}$ associated to this choice of $f$, using the formula

$$\Box{\omega^{X}} = \Box(f' + 2f\frac{1-\mu}{r}) = \frac{1}{2(1-\mu)}f''' + \frac{2}{r}f'' -\frac{2\mu'}{r(1-\mu)}f' +\frac{1}{(1-\mu)r}\left(\frac{\mu'(1-\mu)}{r}-\mu''\right)f.$$

When the smoke clears, we have coefficients of good (i.e. positive) sign preceding the radial and angular $L^2$ terms.  On the other hand, the coefficient before the base $L^2$ term behaves badly.  Namely, we find 

\begin{multline}
\frac{-Mf}{r^2}V -\frac{1}{2}fV' -\frac{1}{4}\Box \omega^{X} \\ = \frac{-534 M^5 - 244 M^4 r + 304 M^3 r^2 + 118 M^2 r^3 - 105 M r^4 + 16 r^5}{4 r^8},
\end{multline}

\noindent
with negative values in a spatially compact region away from the photon sphere.  However, as $\alpha$ is a section of the bundle $\mathcal{L}(-2)$, we have the Poincar\'{e} inequality

\begin{equation}
\int_{S^2(r)} |\tilde{\slashed{\nabla}}\alpha|^2  \geq{\frac{2}{r^2}\int_{S^2(r)} |\alpha|^2}.
\end{equation}

Using this estimate, we borrow from the angular term to deduce

\begin{multline}
\int_{S^2(r)} \frac{f}{r}(1-\frac{3M}{r})|\tilde{\slashed{\nabla}}\alpha|^2 +
\left[\frac{-Mf}{r^2}V -\frac{1}{2}fV' -\frac{1}{4}\Box \omega^{X} \right]|\alpha|^2 \\
\geq{\int_{S^2(r)} \frac{-534 M^5 - 172 M^4 r + 400 M^3 r^2 + 102 M^2 r^3 - 137 M r^4 + 
24 r^5}{4 r^8}}|\alpha|^2 \geq{c\int_{S^2(r)} \frac{|\alpha|^2}{r^3}},
\end{multline}
 
\noindent
for $c$ a small positive constant.
 
Integrating over the spherically symmetric spacetime region bounded by time slices $\{ t = \tau'\}$ and $\{ t = \tau \}$, with $\tau \geq \tau' \geq 0$, we find
 
\begin{multline}
\int_{\tau' \leq{t}\leq{\tau}} K^{X,\omega^{X}} = \int_{\tau'}^{\tau}\int_{2M}^{\infty}\int_{S^2}K^{X,\omega^{X}} r^2d\sigma dr dt\\
\geq{c\int_{\tau'}^{\tau}\int_{2M}^{\infty}\int_{S^2}\left[\frac{1}{r^2}|\slashed{\nabla}_{r*}\alpha|^2 + \frac{1}{r^3}|\alpha|^2\right]r^2d\sigma dr dt.}
\end{multline}
 
Indeed, it is possible to borrow somewhat less of the angular term, yielding the improvement
 
 \begin{multline}
  \int_{\tau' \leq{t}\leq{\tau}} K^{X,\omega^{X}} = \int_{\tau'}^{\tau}\int_{2M}^{\infty}\int_{S^2}K^{X,\omega^{X}} r^2d\sigma dr dt\\
 \geq{c\int_{\tau'}^{\tau}\int_{2M}^{\infty}\int_{S^2}\left[\frac{1}{r^2}|\slashed{\nabla}_{r*}\alpha|^2 + \frac{1}{r^3}|\alpha|^2 + \frac{(r-3M)^2}{r^3}|\tilde{\slashed{\nabla}}\alpha|^2 \right]r^2d\sigma dr dt.}
 \end{multline}
 
Even more, we can estimate the boundary terms $\int_{\{ t = \tau \}} J^{X,\omega^{X}}_{a}\eta^{a}$ by our degenerate $T$-energy $E^{T}_{\alpha}(\tau)$.  We remind the reader,
 
 $$ J^{X,\omega^{X}}_{a} := J^{X}_{a} + \frac{1}{4}\omega^{X}\nabla_{a}|\alpha|^2 -\frac{1}{4}\nabla_{a}\omega^{X}|\alpha|^2, $$
 
 \noindent
 where $\omega^{X}:= f' + 2f\frac{1-\mu}{r}$ for our choice $f:= \Big{(}1+\frac{M}{r}\Big{)}^2\Big{(}1-\frac{3M}{r}\Big{)}.$
 
 It suffices to estimate each of the pieces of $J^{X,\omega^{X}}_{a}$.  Noting that $\eta^{a} = (1-\mu)^{-1/2}\partial_{t}$, we compute
 
 $$J^{X}_{a}\eta^{a} = f(1+\mu)^{-1/2}\slashed{\nabla}_{t}\alpha \cdot \slashed{\nabla}_{r_{*}}\alpha = f(1-\mu)^{1/2}\slashed{\nabla}_{t}\alpha \cdot \slashed{\nabla}_{r}\alpha,$$
 
 \noindent
 such that
 
 $$ \Big{|}\int_{\{t = \tau\}} J^{X}_{a}\eta^{a} \Big{|} \leq B\int_{\{t = \tau \}} \left[|\slashed{\nabla}_{t}\alpha|^2 + (1-\mu)|\slashed{\nabla}_{r}\alpha|^2\right] \leq CE^{T}_{\alpha}(\tau),$$
 
 \noindent
where we have used uniform boundedness $|f| \leq B$ of $f$.  

Next, direct computation gives
 
 $$ \frac{1}{4}\omega^{X}\nabla_{a}|\alpha|^2 \eta^{a} = \frac{(r+M)(15M^2 +5Mr - 2r^2)}{r^4}(1-\mu)^{1/2}\slashed{\nabla}_{t}\alpha \cdot \alpha.$$
 
 The term $\frac{(r+M)(15M^2 +5Mr - 2r^2)}{r^4}$ is uniformly bounded in the exterior region; applying Young's inequality, we find
 
 $$ \Big{|}\int_{\{ t = \tau \}} \frac{1}{4}\omega^{X}\nabla_{a}|\alpha|^2 \eta^{a}\Big{|} \leq B \int_{\{ t = \tau \}} \left[|\slashed{\nabla}_{t}\alpha|^2 + (1-\mu)|\alpha|^2\right] \leq C E^{T}_{\alpha}(\tau).$$
 
 The third term proves to be trivial.  Namely, as $\omega^{X}$ is purely radial, 
 
 $$ \frac{1}{4}|\alpha|^2\nabla_{a}\omega^{X}\eta^{a} = 0.$$
  
 Putting it all together, we bound the weighted energy flux by the initial $T$-energy; that is,
 
 \begin{equation}
 \Big{|}\int_{\{t = \tau\}} J^{X,\omega^{X}}_{a}\eta^{a}\Big{|} \leq C E^{T}_{\alpha}(\tau) \leq CE^{T}_{\alpha}(0),
 \end{equation}
 
 \noindent
 utilizing as well conservation of the $T$-energy \eqref{eq: ConservationLaw}.
 
Applying Stokes' theorem, we summarize this section with the following spacetime estimate
 
\begin{lemma}\label{DegIntegratedDecayA}
Suppose $\alpha$ is a solution of (\ref{eq: RW1}), smooth and compactly supported on the hypersurface $\{ t = 0 \}$.  Applying the multiplier $X=f(r)\partial_{r_{*}}$ as defined above (\ref{eq: XmultDef}), with weight $\omega^{X}$ (\ref{eq: Xweight}), $\alpha$ is seen to satisfy the degenerate integrated decay estimate
  
\begin{multline}
\int_{\tau'}^{\tau}\int_{2M}^{\infty}\int_{S^2}\left[\frac{1}{r^2}|\slashed{\nabla}_{r*}\alpha|^2 + \frac{1}{r^3}|\alpha|^2 + \frac{(r-3M)^2}{r^3}|\tilde{\slashed{\nabla}}\alpha|^2\right]r^2d\sigma dr dt\\
\leq{C\int_{\{\tau' \leq t \leq \tau\}} K^{X,\omega^{X}}} \leq {C\int_{\{ t = \tau'\}} J^{T}_{a}\eta^{a}} = CE^{T}_{\alpha}(\tau'),\end{multline}
 
\noindent
with the spacetime term degenerating at the event horizon $r = 2M$, at the photon sphere $r = 3M$, and at infinity.
 
\end{lemma}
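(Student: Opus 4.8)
The plan is to integrate the weighted density $K^{X,\omega^{X}}$ over the spacetime slab $\mathcal{R} := \{\tau' \leq t \leq \tau,\ r > 2M\}$ and convert the bulk integral into boundary fluxes through the spacetime Stokes' theorem
\[
\int_{\partial\mathcal{R}} J^{X,\omega^{X}}_{a}\eta^{a} = \int_{\mathcal{R}} K^{X,\omega^{X}}.
\]
The interior computation is already in hand: through the choice $f = (1+\tfrac{M}{r})^2(1-\tfrac{3M}{r})$ and weight $\omega^{X} = f' + 2f\tfrac{1-\mu}{r}$, the weighted density has been arranged so that the radial and angular gradient terms carry good coefficients, while the borderline $|\alpha|^2$ term is tamed only after the Poincar\'e inequality $\int_{S^2(r)}|\tilde{\slashed{\nabla}}\alpha|^2 \geq \tfrac{2}{r^2}\int_{S^2(r)}|\alpha|^2$, valid for sections of $\mathcal{L}(-2)$, is invoked on each sphere of symmetry. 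Integrating the resulting spherewise coercivity over $\mathcal{R}$ yields
\[
\int_{\mathcal{R}} K^{X,\omega^{X}} \geq c\int_{\mathcal{R}}\left[\tfrac{1}{r^2}|\slashed{\nabla}_{r_{*}}\alpha|^2 + \tfrac{1}{r^3}|\alpha|^2 + \tfrac{(r-3M)^2}{r^3}|\tilde{\slashed{\nabla}}\alpha|^2\right]r^2\,d\sigma\,dr\,dt,
\]
which is exactly the left-hand side of the claimed estimate. Hence it remains only to bound the boundary fluxes.

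The boundary $\partial\mathcal{R}$ consists of the two time-slices $\{t=\tau\}$ and $\{t=\tau'\}$ together with the asymptotic faces at $r \to 2M$ and $r \to \infty$. Because the data is compactly supported on $\{t=0\}$ and the Regge-Wheeler equation \eqref{eq: RW1} propagates at unit speed in the coordinate $r_{*}$, the solution $\alpha$ has compact $r_{*}$-support on each finite time-slice; consequently the fluxes through the horizon face $r = 2M$, where $r_{*}\to -\infty$, and the null infinity face $r = \infty$, where $r_{*}\to +\infty$, vanish identically. On the remaining slices the three pieces of $J^{X,\omega^{X}}_{a}\eta^{a}$ have each been estimated above — the principal current term using $|f|\leq B$, the weight term using Young's inequality and the boundedness of $\tfrac{(r+M)(15M^2+5Mr-2r^2)}{r^4}$, and the gradient-of-weight term vanishing since $\omega^{X}$ is purely radial — giving $\big|\int_{\{t=\tau\}} J^{X,\omega^{X}}_{a}\eta^{a}\big| \leq C E^{T}_{\alpha}(\tau)$. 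Conservation of the degenerate $T$-energy, \eqref{eq: ConservationLaw}, collapses every time-slice bound to $C E^{T}_{\alpha}(\tau') = C E^{T}_{\alpha}(0)$, and combining this with the coercivity above closes the estimate, since $\int_{\mathcal{R}}[\text{good terms}] \leq \tfrac{1}{c}\int_{\mathcal{R}} K^{X,\omega^{X}} = \tfrac{1}{c}\int_{\partial\mathcal{R}} J^{X,\omega^{X}}_{a}\eta^{a} \leq C E^{T}_{\alpha}(\tau')$.

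The substance of the argument, and the only genuinely delicate point, is the interior coercivity rather than the bookkeeping of boundary terms. The multiplier $X$ is spacelike, so neither $K^{X,\omega^{X}}$ nor its sphere integral is manifestly signed; positivity rests on the dual requirements that $f$ be increasing in $r_{*}$ and vanish to odd order at the photon sphere $r=3M$, encoding the trapping there through the degenerate factor $(r-3M)^2$ multiplying the angular term, while simultaneously the coefficient of $|\alpha|^2$, which turns negative on a compact region, must be absorbed by borrowing a fraction of the angular energy. The spectral gap $\tfrac{2}{r^2}$ afforded by $\mathcal{L}(-2)$ is precisely what makes this borrowing succeed, and the resulting estimate necessarily degenerates at the horizon, the photon sphere, and spatial infinity, as recorded in the statement.
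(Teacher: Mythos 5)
Your proposal is correct and follows essentially the same route as the paper: Stokes' theorem on the slab $\{\tau'\leq t\leq\tau\}$, spherewise coercivity of $K^{X,\omega^{X}}$ obtained by borrowing from the angular term via the $\mathcal{L}(-2)$ Poincar\'e inequality, term-by-term control of $J^{X,\omega^{X}}_{a}\eta^{a}$ by the degenerate $T$-energy, and conservation of that energy to close. Your explicit remark that the fluxes through the asymptotic faces vanish by finite speed of propagation is a point the paper leaves implicit, but it does not change the argument.
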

 
 \subsection{The Multiplier $Z$}

The following treatment adheres to the original approach of Dafermos-Rodnianski \cite{DR}, itself adapted from Morawetz \cite{Morawetz}.

We apply the analog of the Minkowskian conformal Killing field $Z$, defined to be 

\begin{equation}
Z:=u^2\frac{\partial}{\partial u}+v^2\frac{\partial}{\partial v} = \frac{1}{2}(t^2+r_{*}^2)\partial_{t} + tr_{*}\partial_{r_{*}},
\end{equation}

\noindent
in either the Eddington-Finkelstein or Regge-Wheeler coordinates.  As the Regge-Wheeler coordinate $r_{*}$ features in the coefficients, we remind the reader of the normalization $r_{*} = r + 2M\ln(r-2M) - 3M - 2M\ln(M)$.

The density $K^{Z}$ is the sum of

\begin{align*}
T_{ab}\nabla^{a} Z^{b} &= t|\tilde{\slashed \nabla}\alpha|^2\left(-1-\frac{\mu r_{*}}{2r}+\frac{r_{*}(1-\mu)}{r}\right)-\frac{tr_{*}}{2r}(1-\mu)\Box |\alpha|^2\\
&+\frac{tr_{*}}{r}(1-\mu)V|\alpha|^2-\frac{1}{2}V\nabla_{b} Z^{b}|\alpha|^2,
\end{align*}

$$
\nabla^{a} T_{ab} Z^{b}=-\frac{1}{2}Z(V)|\alpha|^2,
$$

\noindent
again using the identity $\Box{|\alpha|^2} = 2V|\alpha|^2 + 2\slashed{\nabla}^{c}\alpha\slashed{\nabla}_{c}\bar{\alpha}$.

In total, we find
\begin{align}
\begin{split}
K^Z&= t|\tilde{\slashed \nabla}\alpha|^2\left(-1-\frac{\mu r_{*}}{2r}+\frac{r^*(1-\mu)}{r}\right)-\frac{tr_{*}}{2r}(1-\mu)\Box |\alpha|^2\\
&+\left(\frac{tr_{*}}{r}(1-\mu)V-\frac{1}{2}V\nabla_{b} Z^{b}-\frac{1}{2}Z(V) \right)|\alpha|^2.
\end{split}
\end{align}

As with the $X$ multiplier, we apply integration by parts, encoded by a weight function $\omega^{Z}$, to swap the d'Alembertian.  Namely, we let 

\begin{equation}
\omega^Z:=\frac{2tr^*}{r}(1-\mu)
\end{equation}

\noindent
and define the weighted energy current

\begin{equation}
J^{Z,\omega^Z}_{a} :=J^Z_{a}+\frac{1}{4}\omega^{Z}\nabla_{a} |\alpha|^2-\frac{1}{4}|\alpha|^2\nabla_{a}\omega^{Z},
\end{equation}

\noindent
and the weighted density

\begin{equation}
K^{Z,\omega^{Z}} := K^{Z} + \frac{1}{4}\omega^{Z}\Box |\alpha|^2 -\frac{1}{4} \Box \omega^{Z} |\alpha|^2.
\end{equation}

With our choice of weight function, we compute

\begin{align}
\begin{split}
K^{Z,\omega^Z}&= t|\tilde{\slashed \nabla}\alpha|^2\left(-1-\frac{\mu r_{*}}{2r}+\frac{r_{*}(1-\mu)}{r}\right)-
\frac{1}{4}\Box\omega^Z |\alpha|^2\\
&+\left(\frac{tr_{*}}{r}(1-\mu)V-\frac{1}{2}V\nabla_{b} Z^{b}-\frac{1}{2}Z(V) \right)|\alpha|^2.
\end{split}
\end{align}

As in the earlier work \cite{DRClay}, the first two terms are non-negative for $r$ near the event horizon and near infinity.  Likewise, the coefficient
 
$$\frac{tr_{*}}{r}(1-\mu)V-\frac{1}{2}V\nabla_{b} Z^{b}-\frac{1}{2}Z(V)=\frac{4Mt(r-2M)}{r^5}\left((2r-8M)\log \frac{r-2M}{M}-7r+12M \right)$$

\noindent
is non-negative in these regimes. Hence there exist $2M<r'<R'<\infty$, hereafter regarded as fixed, such that

\begin{align}\label{Zdensity}
\begin{split}
K^{Z,\omega^Z}&\geq 0\ \textup{as}\ r\leq r'\ \textup{or}\ r\geq R',\\
\Big{|}K^{Z,\omega^Z}\Big{|}&\leq Ct\left(|\alpha|^2+|\slashed\nabla\alpha|^2\right)\ \textup{as}\ r'\leq r\leq R'.
\end{split}
\end{align}

The density estimate above will feature in an application of Stokes' theorem on the spacetime region $\{\tau' \leq t \leq \tau\}$.  Before carrying out this computation, however, we estimate as well the weighted boundary integrals.

We define the the unweighted $Z$-energy by

\begin{equation}
E^{Z}_{\alpha}(\tau) := \int_{\{ t = \tau \}} J^{Z}_{a}\eta^{a}_{\tau}
\end{equation}

\noindent
and the weighted $Z$-energy by

\begin{equation}
E^{Z,\omega^{Z}}_{\alpha}(\tau) := \int_{\{ t = \tau \}} J^{Z,\omega^{Z}}_{a}\eta^{a}_{\tau}.
\end{equation}

Note that the future-directed unit normal is given by $\eta_{\tau}^{a} =\frac{1}{\sqrt{1-\mu}}\partial_t=\frac{1}{2\sqrt{1-\mu}}(\partial_u+\partial_v),$ such that 

\begin{equation}
E^{Z}_{\alpha}(\tau) = \int_{\{t = \tau \}} \frac{1}{2\sqrt{1-\mu}}\left( u^2|\slashed{\nabla}_{u}\alpha|^2+v^2|\slashed{\nabla}_{v}\alpha|^2+(u^2+v^2)(1-\mu)|\tilde{\slashed\nabla}\alpha|^2+(u^2+v^2)(1-\mu)V|\alpha|^2 \right),
 \end{equation}

\noindent
and

\begin{align}
\begin{split}
E^{Z,\omega^{Z}}_{\alpha}(\tau) = & E^{Z}_{\alpha}(\tau) + \int_{\{t = \tau\}} \frac{1}{4}\omega^{Z}\nabla_{a} |\alpha|^2 \eta_{\tau}^{a}-\frac{1}{4}|\alpha|^2\nabla_{a}\omega^Z \eta_{\tau}^{a} \\
= &  E^{Z}_{\alpha}(\tau) + \int_{\{t = \tau\}}\frac{1}{2\sqrt{1-\mu}}\left(2t\frac{r_{*}}{r}(1-\mu) \alpha\cdot{\slashed{\nabla}_{t}\alpha}-\frac{r_{*}}{r}(1-\mu)|\alpha|^2 \right).
\end{split}
\end{align}

The weighted energy can be rewritten using integration by parts.  In particular, introducing the analog of the Minkowski scaling field

\begin{align}\label{Soperators}
S = & 2(t\slashed{\nabla}_t+r_{*}\slashed{\nabla}_{r_{*}}) = 2(v\slashed{\nabla}_{v} + u\slashed{\nabla}_{u}),\\
\underline{S} = & 2(t\slashed{\nabla}_{r_{*}}+r_{*}\slashed{\nabla}_t) = 2(v\slashed{\nabla}_{v} - u \slashed{\nabla}_{u}),
\end{align}

\noindent
we rewrite in terms of these operators and integrate by parts with respect to the Regge-Wheeler $r_{*}$ coordinate to find

\begin{align*}
&\int_{\{t=\tau\}} \frac{1}{4}\omega^Z\nabla_{a}|\alpha|^2 \eta_{\tau}^{a} - \frac{1}{4}|\alpha|^2\nabla_{a}\omega^Z \eta_{\tau}^{a}\\
=&\int_{S^2}d\sigma \int_{-\infty}^\infty dr_{*} \left[\frac{r^2}{2}\left(2t\frac{r_{*}}{r}(1-\mu)\alpha\cdot \slashed{\nabla}_{t}\alpha-\frac{r_{*}}{r}(1-\mu)|\alpha|^2 \right)\right]\Big{|}_{t = \tau}\\
=&\int_{S^2}d\sigma\int_{-\infty}^\infty dr_{*} \left[rr_{*}(1-\mu)\alpha\cdot\left(\frac{S\alpha}{2}-r_{*}\slashed{\nabla}_{r_{*}}\alpha\right)-\frac{rr_{*}}{2}(1-\mu)|\alpha|^2\right]\Big{|}_{t = \tau}\\
=&\int_{S^2}d\sigma \int_{-\infty}^\infty dr_{*} \left[ \frac{rr_{*}(1-\mu)}{2}S\alpha\cdot\alpha+\frac{1}{2}|\alpha|^2\left(2r_{*}r(1-\mu)+(1-\mu)(r_{*})^2\right)-\frac{rr_{*}}{2}(1-\mu)|\alpha|^2\right]\Big{|}_{t = \tau}\\
=&\int_{S^2}d\sigma \int_{-\infty}^\infty dr_{*}\left[ \frac{1}{2}r^2(1-\mu)\left(\frac{r_{*}}{r}S\alpha \cdot \alpha+\left(\frac{r_{*}}{r}\right)^2|\alpha|^2+\frac{r_{*}}{r}|\alpha|^2\right)\right]\Big{|}_{t = \tau}\\
=&\int_{S^2}d\sigma \int_{-\infty}^\infty dr_{*} \left[ \frac{1}{2}r^2(1-\mu)\left( \left|\frac{1}{2}S\alpha+\frac{r_{*}}{r}\alpha\right|^2-\frac{1}{4}|S\alpha|^2+\frac{r_{*}}{r}|\alpha|^2 \right)\right]\Big{|}_{t = \tau}.
\end{align*}

By similar computation,
\begin{align*}
&\int_{\{t=\tau\}}\frac{1}{4}\omega^Z\nabla_{a}|\alpha|^2 \eta_{\tau}^{a} -\frac{1}{4}|\alpha|^2\nabla_{a}\omega^Z \eta_{\tau}^{a}\\
=&\int_{S^2}d\sigma \int_{-\infty}^\infty dr_{*} \left[\frac{1}{2}r^2(1-\mu)\left( \left|\frac{1}{2}\underline{S}\alpha+\frac{t}{r}\alpha\right|^2-\frac{1}{4}|\underline{S}\alpha|^2-\frac{r_{*}}{r}|\alpha|^2 \right)\right]\Big{|}_{t = \tau}.
\end{align*}

Combining the two computations, we rewrite the weight term as
\begin{align*}
&\int_{\{t=\tau\}}\frac{1}{4}\omega^Z\nabla_{a}|\alpha|^2 \eta_{\tau}^{a}-\frac{1}{4}|\alpha|^2\nabla_{a}\omega^Z \eta_{\tau}^{a}\\
=&\int_{S^2}d\sigma \int_{-\infty}^\infty dr_{*}\left[ \frac{1}{2}r^2(1-\mu)\left(\frac{1}{2}\left|\frac{1}{2}S\alpha+\frac{r_{*}}{r}\alpha\right|^2+\frac{1}{2}\left|\frac{1}{2}\underline{S}\alpha+\frac{t}{r}\alpha\right|^2-\frac{1}{8}|S\alpha|^2-\frac{1}{8}|\underline{S}\alpha|^2\right)\right]\Big{|}_{t = \tau}\\
=&\int_{\{t=\tau\}}\frac{1-\mu}{2\sqrt{1-\mu}}\left(\frac{1}{2}\left|\frac{1}{2}S\alpha+\frac{r_{*}}{r}\alpha\right|^2+\frac{1}{2}\left|\frac{1}{2}\underline{S}\alpha+\frac{t}{r}\alpha\right|^2-\frac{1}{8}|S\alpha|^2-\frac{1}{8}|\underline{S}\alpha|^2\right).
\end{align*}

Expressed in terms of $S$ and $\underline{S}$, the unweighted $Z$-energy has the form

$$ 
E^{Z}_{\alpha}(\tau) = \int_{\{t=\tau\}} \frac{1}{2\sqrt{1-\mu}}\left(\frac{1}{8}|S\alpha|^2+\frac{1}{8}|\underline{S}\alpha|^2+(u^2+v^2)(1-\mu)|\tilde{\slashed \nabla}\alpha|^2+(u^2+v^2)(1-\mu)V|\alpha|^2 \right).
$$

Taken together, these two computations allow us to rewrite the weighted $Z$-energy as

\begin{align*}
E^{Z,\omega^{Z}}_{\alpha}(\tau) = &\int_{\{t=\tau\}}J^{Z,\omega^Z}_{a} \eta^{a}_{\tau}\\
=&\int_{\{t=\tau\}} \frac{1}{2\sqrt{1-\mu}}\left(\frac{1}{8}|S\alpha|^2+\frac{1}{8}|\underline{S}\alpha|^2+(u^2+v^2)(1-\mu)|\tilde{\slashed \nabla}\alpha|^2+(u^2+v^2)(1-\mu)V|\alpha|^2 \right)\\
+&\int_{\{t=\tau\}}\frac{1-\mu}{2\sqrt{1-\mu}}\left(\frac{1}{2}\left|\frac{1}{2}S\alpha+\frac{r_{*}}{r}\alpha\right|^2+\frac{1}{2}\left|\frac{1}{2}\underline{S}\alpha+\frac{t}{r}\alpha\right|^2-\frac{1}{8}|S\alpha|^2-\frac{1}{8}|\underline{S}\alpha|^2\right)\\
=&\int_{\{t=\tau\}}\frac{1}{2\sqrt{1-\mu}}\left(\frac{\mu}{8}|S\alpha|^2+\frac{\mu}{8}|\underline{S}\alpha|^2+(u^2+v^2)(1-\mu)|\tilde{\slashed \nabla}\alpha|^2+(u^2+v^2)(1-\mu)V|\alpha|^2 \right)\\
+&\int_{\{t=\tau\}}\frac{1}{2\sqrt{1-\mu}}\left(\frac{1-\mu}{2}\left|\frac{1}{2}S\alpha+\frac{r_{*}}{r}\alpha\right|^2+\frac{1-\mu}{2}\left|\frac{1}{2}\underline{S}\alpha+\frac{t}{r}\alpha\right|^2\right).
\end{align*}

Recall that our $\mathcal{L}(-2)$ section $\alpha$ satisfies a Poincar\'{e} inequality on the sphere,

\begin{align*}
\int_{S^2(r)}(u^2+v^2)|\tilde{\slashed\nabla}\alpha|^2&\geq \int_{S^2(r)} \frac{2(u^2+v^2)}{r^2}|\alpha|^2\\
                                                  &=\int_{S^2(r)}\left(\frac{t^2}{r^2}+\frac{r_{*}^2}{r^2}\right)|\alpha|^2.
\end{align*} 

\noindent
This inequality allows us to control the cross-terms appearing in the expression above, yielding the estimate

\begin{equation}\label{eq: Zcomparison}
E^{Z,\omega^{Z}}_{\alpha}(\tau) \geq c E^{Z}_{\alpha}(\tau),
\end{equation}

\noindent
controlling the unweighted $Z$-energy by the weighted $Z$-energy.

Having estimated both the bulk and the boundary terms, we apply Stokes' theorem in the region $\{ \tau' \leq t \leq \tau \}$, with the goal of proving uniform boundedness of the unweighted $Z$-energy $E^{Z}_{\alpha}$.  We find, for $\tau \geq \tau' \geq 0$,

\begin{align*}
\int_{\{t=\tau\}}J^{Z,\omega^Z}_{a} \eta^{a}_{\tau} &=\int_{\{t=\tau'\}}J^{Z,\omega^Z}_{a} \eta^{a}_{\tau'}-\int_{\{\tau'\leq t\leq \tau\}}K^{Z,\omega^Z}\\
                                          &\leq \int_{\{t=\tau'\}}J^{Z,\omega^Z}_{a} \eta^{a}_{\tau'}+\int_{\{\tau'\leq t\leq \tau\}\cap\{r'\leq r\leq R'\}}|K^{Z,\omega^Z}|\\
                                          &\leq \int_{\{t=\tau'\}}J^{Z,\omega^Z}_{a} \eta^{a}_{\tau'}+\tau C\int_{\{\tau'\leq t\leq \tau\}\cap\{r'\leq r\leq R'\}}\left[K^{X,\omega^X}[\alpha]+\sum_{i=1}^{3}K^{X,\omega^X}[\Omega_{i}\alpha]\right]\\
                                          &\leq \int_{\{t=\tau'\}}J^{Z,\omega^Z}_{a} \eta^{a}_{\tau'}+\tau C\int_{\{t=\tau'\}}\left[J^{T}_{a}[\alpha]\eta^{a}_{\tau'}+\sum_{i =1}^{3}J^{T}_{a}[\Omega_{i}\alpha]\eta^{a}_{\tau'}\right],
\end{align*}

\noindent
or

\begin{equation}\label{eq: weightedZenergy}
E^{Z,\omega^{Z}}_{\alpha}(\tau) \leq  E^{Z,\omega^{Z}}_{\alpha}(\tau') + \tau C \left[ E^{T}_{\alpha}(\tau') + \sum_{i = 1}^{3}E^{T}_{\Omega_{i}\alpha}(\tau')\right],
\end{equation}

\noindent
where we have used the density estimates (\ref{Zdensity}) and Lemma \ref{DegIntegratedDecayA} for $K^{Z,\omega^{Z}}$ and $K^{X,\omega^{X}}$, respectively.  Uniform boundedness of the unweighted $Z$-energy is established by means of a localized decay estimate and bootstrapping, as follows.

\begin{lemma}\label{localized}
Suppose $\alpha$ is a solution of (\ref{eq: RW1}), with appropriate regularity, and let $\tau_0>0$ and $\tau_1=1.1\tau_0$. For any pair $2M < r_0 < R_0 < \infty$ satisfying $|r_*(r_0)|,|r_*(R_0)|\leq 0.1\tau_0$, we have

\begin{align*}
\int_{\{\tau_0\leq t \leq \tau_1\}\cap\{r_0\leq r\leq R_0\}}K^{X,\omega^X} \leq C\tau_0^{-2}\int_{\{t=\tau_0\}}J^{Z,\omega^Z}_{a} \eta^{a}_{\tau_0},
\end{align*}
\noindent
where the constant $C$ is independent of $r_0, R_0,$ and $t_0$.
\end{lemma}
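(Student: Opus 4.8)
The guiding principle is that the conformal $Z$-energy carries a weight that is uniformly large on the slab in question, and the whole point is to trade this weight for the two powers of $\tau_0$. Writing $\mathcal R=\{\tau_0\le t\le\tau_1\}\cap\{r_0\le r\le R_0\}$ and recalling $u=\tfrac12(t-r_*),\,v=\tfrac12(t+r_*)$, the hypotheses $t\in[\tau_0,1.1\tau_0]$ and $|r_*|\le 0.1\tau_0$ force $u,v\in[0.45\tau_0,0.6\tau_0]$ on $\mathcal R$; in particular both $u^2$ and $v^2$, and a fortiori $u^2+v^2=\tfrac12(t^2+r_*^2)$, are bounded below by $c\tau_0^2$ there. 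I would begin by recording this together with two facts about $K^{X,\omega^X}$: first, that each of its coefficients — $f'/(1-\mu)$, $\tfrac{f}{r}(1-\tfrac{3M}{r})$, and the coefficient of $|\alpha|^2$ — is uniformly bounded on $r\ge 2M$, so that $K^{X,\omega^X}$ is pointwise dominated by the ordinary energy density $J^T_a\eta^a$; and second, the sphere-positivity $\int_{S^2(r)}K^{X,\omega^X}\,r^2\,d\sigma\ge 0$ for every $r$, which is exactly the Poincar\'e/borrowing inequality already exploited in Lemma \ref{DegIntegratedDecayA}.

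The essential subtlety is that one cannot simply integrate the pointwise comparison $K^{X,\omega^X}\lesssim\tau_0^{-2}\,J^{Z}_a\eta^a$ over $\mathcal R$: doing so integrates an energy density over a slab of temporal width $\sim\tau_0$ and loses a power of $\tau_0$. The divergence theorem must be used instead, so that the spacetime integral collapses to three-dimensional boundary fluxes and no factor of the slab width is incurred. Concretely, I would apply Stokes' theorem to the current $J^{X,\omega^X}$ on $\mathcal R$. The fluxes through $\{t=\tau_0\}$ and $\{t=\tau_1\}$ are controlled, by the domination above, by the \emph{local} $T$-energy on the shell $\{r_0\le r\le R_0\}$, and on this shell the weight lower bound gives $J^T_a\eta^a\le C\tau_0^{-2}J^{Z}_a\eta^a$ pointwise; hence these two fluxes are at most $C\tau_0^{-2}E^Z_\alpha(\tau_0)$ and $C\tau_0^{-2}E^Z_\alpha(\tau_1)$, which is the desired order. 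It remains to dispose of the timelike fluxes through $\{r=r_0\}$ and $\{r=R_0\}$ and of the $E^Z_\alpha(\tau_1)$ term. For the latter I would run the estimate as one step of a dyadic bootstrap, feeding in the almost-conservation \eqref{eq: weightedZenergy} and the comparison \eqref{eq: Zcomparison} to replace $E^Z_\alpha(\tau_1)$ by $E^{Z,\omega^Z}_\alpha(\tau_0)$ up to the error being estimated. For the former, the sphere-positivity is decisive: since the weight $u^2+v^2$ is constant on each sphere of symmetry, the quantity $(u^2+v^2)K^{X,\omega^X}$ is sphere-positive, so I may freely enlarge the radial range from $[r_0,R_0]$ to $(2M,\infty)$, at which point the timelike boundaries disappear and one is reduced to the conformally weighted integrated-decay estimate
\[
\int_{\{\tau_0\le t\le\tau_1\}}(u^2+v^2)\,K^{X,\omega^X}\;\le\;C\,E^{Z,\omega^Z}_\alpha(\tau_0),
\]
after which $\tfrac{c}{2}\tau_0^2\int_{\mathcal R}K^{X,\omega^X}\le\int_{\mathcal R}(u^2+v^2)K^{X,\omega^X}\le\int_{\{\tau_0\le t\le\tau_1\}}(u^2+v^2)K^{X,\omega^X}$ yields the claim, with a constant manifestly independent of $r_0,R_0,\tau_0$ since these enter only through the uniform weight bound and a monotone enlargement of the radial range.

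The hard part is the displayed conformally weighted estimate, and it is here that the genuinely delicate behaviour at the horizon enters. A naive pointwise comparison of $K^{X,\omega^X}$ against $\tau_0^{-2}$ times the $Z$-energy density fails as $r\to 2M$: the angular term of $K^{X,\omega^X}$ carries an $O(1)$ coefficient whereas the matching $Z$-energy term degenerates like $\sqrt{1-\mu}$, which is exponentially small once $r_*\approx -0.1\tau_0$. The weighted estimate must therefore be obtained at the level of the conformal energy identity — extracting the weight from the $Z$-multiplier density $K^{Z,\omega^Z}$, whose good sign outside the fixed middle region \eqref{Zdensity} and control inside it absorb the degeneracies at the horizon, at the photon sphere $r=3M$, and at infinity, with the Poincar\'e inequality supplying the coercivity and commutation with the $\Omega_i$ handling the angular directions as in \eqref{eq: weightedZenergy}. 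Carrying this out, following the conformal-energy machinery of \cite{DRClay}, is the substance of the lemma; the reduction above is then routine bookkeeping.
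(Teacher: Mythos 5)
Your reduction is attractive but it does not close: everything is funneled into the displayed ``conformally weighted integrated-decay estimate''
\[
\int_{\{\tau_0\le t\le\tau_1\}}(u^2+v^2)\,K^{X,\omega^X}\;\le\;C\,E^{Z,\omega^Z}_\alpha(\tau_0),
\]
which you yourself identify as ``the hard part'' and ``the substance of the lemma,'' and which you then do not prove. Nothing in the paper supplies it: Lemma \ref{DegIntegratedDecayA} is the \emph{unweighted} Morawetz estimate, and the $Z$-multiplier computations give only the boundary comparison \eqref{eq: Zcomparison} and the almost-conservation \eqref{eq: weightedZenergy}, not a spacetime bound with the weight $u^2+v^2$ inserted against $K^{X,\omega^X}$. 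Worse, your enlargement step makes the task strictly harder than the lemma itself: once you pass from $[r_0,R_0]$ to $(2M,\infty)$, the weight $u^2+v^2=\tfrac12(t^2+r_*^2)$ is unbounded as $r_*\to\pm\infty$, and one must confront the genuine interaction between the conformal weight, the degeneration at the horizon and photon sphere, and the falloff of $K^{X,\omega^X}$ at infinity --- essentially the heavy global machinery of the original Dafermos--Rodnianski decay argument, which this lemma is designed to \emph{avoid}. Deferring that to ``the conformal-energy machinery of \cite{DRClay}'' is not a proof.

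The missing idea is finite speed of propagation. The paper's proof truncates the data at $t=\tau_0$ by a cutoff $\chi(r_*/\tau_0)$ supported in $|r_*|\le\tfrac12\tau_0$; since $|r_*(r_0)|,|r_*(R_0)|\le 0.1\tau_0$ and the slab has temporal width $0.1\tau_0$, the truncated solution $\tilde\alpha$ coincides with $\alpha$ on all of $\{\tau_0\le t\le\tau_1\}\cap\{r_0\le r\le R_0\}$ by domain of dependence. One then applies the \emph{unweighted} Lemma \ref{DegIntegratedDecayA} to $\tilde\alpha$, obtaining a bound by $E^T_{\tilde\alpha}(\tau_0)$, whose integrand is supported in $|r_*|\le\tfrac12\tau_0$ where $u,v\ge\tfrac14\tau_0$; there the pointwise weight comparison you correctly observed converts this into $C\tau_0^{-2}$ times the $Z$-energy, and the error terms generated by the cutoff (the $\chi'$ terms) are absorbed into $E^{Z,\omega^Z}_\alpha(\tau_0)$ via a one-dimensional Hardy-type inequality. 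Your observations that $u,v\in[0.45\tau_0,0.6\tau_0]$ on the shell and that $K^{X,\omega^X}$ is sphere-positive are both correct and both used in the actual argument, but without the localization of the data they do not suffice; with it, no weighted Morawetz estimate is needed at all.
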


\begin{proof}
Suppose $\alpha|_{t=\tau_0}$ is supported in $-\frac{1}{2}\tau_0\leq r_{*}\leq \frac{1}{2}\tau_0$. By Lemma \ref{DegIntegratedDecayA},

\begin{align*}
\int_{\{\tau_0\leq t\leq \tau_1\}}K^{X,\omega^X}\leq C\int_{\{t=\tau_0\}}J^T_{a} \eta^{a}_{\tau_0} = C\int_{\{t=\tau_0\}\cap{\{\frac{1}{2}\tau_0\leq r_{*}\leq \frac{1}{2}\tau_0\}}}J^T_{a} \eta^{a}_{\tau_0}.
\end{align*}

In the region $\{t = \tau_0\} \cap{\{ -\frac{1}{2}\tau_0\leq r_{*}\leq \frac{1}{2}\tau_0\}}$, the Eddington-Finkelstein coordinates $u$ and $v$ satisfy $u=\frac{1}{2}(t-r_{*})\geq \frac{1}{4}\tau_0$, $v=\frac{1}{2}(t+r_{*})\geq \frac{1}{4}\tau_0$, such that

$$ \int_{\{t = \tau_0\}} J^{Z,\omega^{Z}}_{a}\eta^{a}_{\tau_0} \geq c \int_{\{t = \tau_0\}} J^{Z}_{a}\eta^{a}_{\tau_0} \geq c \tau_{0}^2 \int_{\{t = \tau_0\}} J^{T}_{a}\eta^{a}_{\tau_0},$$

\noindent
where we have used \eqref{eq: Zcomparison} and the fact that our region is away from the event horizon.  Taken together, these two estimates give the localized 
decay

\begin{align*}
\int_{\{\tau_0\leq t\leq \tau_1\}}K^{X,\omega^X}&\leq C\tau_0^{-2}\int_{\{t=\tau_0\}}J^{Z}_{a}\eta^{a}_{\tau_0}\\
                                      &\leq C\tau_0^{-2}\int_{\{t=\tau_0\}}J^{Z,\omega^Z}_{a} \eta^{a}_{\tau_0}.
\end{align*}

In general, fixing a bump function $\chi:\mathbb{R} \mapsto \mathbb{R}$, supported in  $[-\frac{1}{2},\frac{1}{2}]$ and with $\chi= 1$ in $[-\frac{1}{4},\frac{1}{4}]$, we define $\tilde{\alpha}$ to be the solution of \eqref{eq: RW1} with initial data $\tilde{\alpha}|_{t=\tau_0}=\chi(r_{*}/\tau_0)\alpha|_{t=\tau_0}$ and $\mathcal{L}_{T}\tilde{\alpha}|_{t=\tau_0}=\chi(r_{*}/\tau_0)\mathcal{L}_{T}\alpha|_{t=\tau_0}$. By finite speed of propagation, such a specification gives $\alpha=\tilde{\alpha}$ throughout the spacetime region $\{t_0\leq t\leq t_1\}\cap\{r_0\leq r\leq R_0\}$, so that

\begin{align*}
\int_{\{\tau_0\leq t \leq \tau_1\}\cap\{r_0\leq r\leq R_0\}}K^{X,\omega^X}[\alpha]&=\int_{\{\tau_0\leq t \leq \tau_1\}\cap\{r_0\leq r\leq R_0\}}K^{X,\omega^X}[\tilde{\alpha}]\\
                                                             &\leq C\tau_0^{-2}\int_{\{t=\tau_0\}}J^{Z,\omega^Z}_{a} [\tilde{\alpha}] \eta^{a}_{\tau_0}.
\end{align*}

We compute
\begin{align*}
&J^{Z,\omega^Z}_{a} [\tilde{\alpha}] \eta^{a}_{\tau_0}\\
=&\frac{1}{2\sqrt{1-\mu}}\left( (u^2+v^2)|\slashed{\nabla}_{t}\tilde{\alpha}|^2+(u^2+v^2)|\slashed{\nabla}_{r_{*}}\tilde{\alpha}|^2+(u^2+v^2)(1-\mu)|\tilde{\slashed\nabla}\tilde{\alpha}|^2+(u^2+v^2)(1-\mu)V|\tilde{\alpha}|^2 \right)\\
=&\frac{\chi^2}{2\sqrt{1-\mu}}\left( (u^2+v^2)|\slashed{\nabla}_{t}\alpha|^2+(u^2+v^2)|\slashed{\nabla}_{r_{*}}\alpha|^2+(u^2+v^2)(1-\mu)|\tilde{\slashed\nabla}\alpha|^2+(u^2+v^2)(1-\mu)V|\alpha|^2 \right)\\
+&\frac{1}{2\sqrt{1-\mu}} \left((u^2+v^2)(\frac{2}{\tau_0}\chi\chi'\alpha \cdot \slashed{\nabla}_{r_{*}}\alpha+\frac{\chi'^2}{\tau_0^2}|\alpha|^2)  \right).
\end{align*}

Applying Young's inequality, the lemma is proved if we demonstrate

\begin{align*}
\int_{\{\tau=\tau_0\} \cap \{-\frac{1}{2}\tau_0\leq r_{*}\leq\frac{1}{2}\tau_0\}} \frac{1}{2\sqrt{1-\mu}}\frac{(u^2+v^2)}{\tau_0^2}|\alpha|^2\leq C\int_{\{t=t_0\}}J^{Z,\omega^Z}_{a} [\alpha]\eta^{a}_{\tau_0},
\end{align*}

We will use the one dimensional inequality

\begin{align*}
\int_{-\tau}^\tau |f(x)|^2dx\leq C\tau^2\left(\int_{-\tau}^\tau |f'(x)|^2 dx+\int_{-1}^1 |f(x)|^2 dx \right),
\end{align*}

\noindent
to estimate

\begin{align*}
&\int_{\{t=\tau_0\}\cap \{-\frac{1}{2}\tau_0\leq r_{*}\leq\frac{1}{2}\tau_0\}}\frac{1}{2\sqrt{1-\mu}}\frac{(u^2+v^2)}{t_0^2}|\alpha|^2\\
\leq&C\int_{-\frac{1}{2}\tau_0}^{\frac{1}{2}\tau_0}dr_{*} \int_{S^2} d\sigma\ r^2|\alpha|^2\\
\leq&C\tau_0^2 \left(\int_{-\frac{1}{2}\tau_0}^{\frac{1}{2}\tau_0}dr_{*} \int_{S^2} d\sigma\ \left(|\slashed{\nabla}_{r_{*}}\alpha|^2+\frac{1-\mu}{r^2}|\alpha|^2\right)r^2+\int_{-1}^1dr_{*}\int_{S^2}d\sigma\ r^2|\alpha|^2\right)\\
=&C\tau_0^2\left(\int_{\{t=\tau_0\}\cap\{ \frac{1}{2}\tau_0\leq r_{*}\leq\frac{1}{2}\tau_0\}} \frac{1}{\sqrt{1-\mu}}\left(|\slashed{\nabla}_{r_{*}}\alpha|^2+\frac{1-\mu}{r^2}|\alpha|^2\right)+\int_{\{t=\tau_0\}\cap\{ -1\leq r_{*}\leq 1\}} \frac{1}{\sqrt{1-\mu}}|\alpha|^2\right)\\
\leq &C\int_{\{t=\tau_0\}}J^{Z,\omega^Z}_{a} [\alpha]\eta^{a}_{\tau_0}.
\end{align*}
\end{proof}

With this localized decay estimate in hand, we adapt the bootstrapping method of \cite{DR} to deduce uniform boundedness of the weighted $Z$-energy, as follows.
  
\begin{lemma}\label{weightedZbound}
Suppose $\alpha$ is a solution of the Regge-Wheeler equation (\ref{eq: RW1}), smooth and compactly supported on $\{ t = 0 \}$. Then the weighted $Z$-energy associated with $\alpha$, $E^{Z,\omega^Z}_{\alpha}(\tau)$, is bounded for $\tau \geq 0$.  Indeed, defining the initial energy

\begin{equation}\label{eq: E1}
E_1[\alpha] := \sum_{(m)\leq 3} \int_{\{t = 0 \}} r^2J^{N}_{a}[\Omega^{(m)}\alpha]\eta^{a},
\end{equation}

\noindent
where $(m)$ is a multi-index and $\Omega = \{ \Omega_{i} | i = 1,2,3\}$, we find that 

\begin{equation}
E^{Z}_{\alpha}(\tau) \leq CE_1[\alpha]
\end{equation}

\noindent
for all $\tau \geq 0.$

\end{lemma}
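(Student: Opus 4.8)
The plan is to upgrade the almost-conservation inequality \eqref{eq: weightedZenergy} to a genuine uniform bound by running it over a geometric sequence of times and feeding in the localized decay of Lemma \ref{localized}. Fix a large $\tau_0$ and set $\tau_n := (1.1)^n\tau_0$, so that $\tau_{n+1}=1.1\tau_n$ and, once $\tau_n$ is large enough that $|r_*(r')|,|r_*(R')|\leq 0.1\tau_n$, the bad region $\{r'\leq r\leq R'\}$ (where $K^{Z,\omega^Z}$ loses its sign) sits inside the region $\{r_0\leq r\leq R_0\}$ to which Lemma \ref{localized} applies. Applying Stokes' theorem over the single slab $\{\tau_n\leq t\leq \tau_{n+1}\}$ exactly as in the derivation of \eqref{eq: weightedZenergy}, the slab error $\int|K^{Z,\omega^Z}|$ carries a factor $t\leq \tau_{n+1}\sim\tau_n$, while Lemma \ref{localized} contributes $\tau_n^{-2}$, for a net gain of $\tau_n^{-1}$; using that $K^{X,\omega^X}$ is non-negative on spheres (Lemma \ref{DegIntegratedDecayA}), so that its spacetime integral may be freely restricted to $\{r'\leq r\leq R'\}$, I obtain for any solution $\psi$ of \eqref{eq: RW1} the dyadic recursion
\begin{equation*}
E^{Z,\omega^Z}_{\psi}(\tau_{n+1})\leq E^{Z,\omega^Z}_{\psi}(\tau_n)+C\tau_n^{-1}\Big(E^{Z,\omega^Z}_{\psi}(\tau_n)+\sum_{i=1}^{3}E^{Z,\omega^Z}_{\Omega_i\psi}(\tau_n)\Big).
\end{equation*}
Since $T$ and the $\Omega_i$ commute with $\slashed{\Box}_{\mathcal{L}(-2)}$ and $V$ is radial, this holds for every $\psi=\Omega^{(m)}\alpha$. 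The single angular commutation on the right is unavoidable: the photon sphere $r=3M$ lies in $\{r'\leq r\leq R'\}$, the $X$-density degenerates there, and the trapped angular gradient appearing in $|K^{Z,\omega^Z}[\psi]|$ can only be recovered from the radial $X$-densities of the commuted fields $\Omega_i\psi$.

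Because level $|m|$ is driven by level $|m|+1$, the recursion does not close at a single level; I would close it by a downward cascade through $|m|\leq 3$. At the top level I discard the localized improvement and estimate the slab error crudely by means of the uniform energy bound of Theorem \ref{EnergyBoundA} for $\Omega^{(3)}\alpha$, whose density dominates the full, \emph{non-degenerate} gradient on the bounded region $\{r'\leq r\leq R'\}$; this sidesteps the photon-sphere degeneracy at the cost of one slab-length factor and yields at worst quadratic-in-$\tau_n$ growth of $E^{Z,\omega^Z}_{\Omega^{(3)}\alpha}$ in terms of three angular derivatives of the data. Substituting this growth into the recursion one level down, the decisive analytic point is that for the geometric sequence $\tau_n$ the sums $\sum_n\tau_n^{-1}(\log\tau_n)^k$ converge for every $k$, and the product $\prod_n(1+C\tau_n^{-1})$ converges as well. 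Consequently each descent lowers the growth rate by one step: quadratic at level three, linear at level two, logarithmic at level one, and uniformly bounded at level zero. The three retained commutations are exactly what is needed for this telescoping to terminate in a bound on the uncommuted energy $E^{Z,\omega^Z}_{\alpha}$; intermediate times $\tau\in[\tau_n,\tau_{n+1}]$ are handled by a further application of \eqref{eq: weightedZenergy} on a bounded-length slab, and the range $\tau\leq\tau_0$ by absorbing a fixed constant, again via Theorem \ref{EnergyBoundA}.

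Boundedness of the unweighted $Z$-energy then follows from the comparison \eqref{eq: Zcomparison}, which gives $E^{Z}_{\alpha}(\tau)\leq c^{-1}E^{Z,\omega^Z}_{\alpha}(\tau)$, while the initial weighted $Z$-energies $E^{Z,\omega^Z}_{\Omega^{(m)}\alpha}(0)$ entering the cascade are controlled by $E_1[\alpha]$: on $\{t=0\}$ one has $u^2+v^2=\tfrac12 r_*^2$, comparable to the weight carried by $E_1$, and the $N$-energy density dominates the gradient terms of the $Z$-energy. The main obstacle is precisely the derivative loss forced by trapping at the photon sphere: it obstructs any self-contained bootstrap at a single level and necessitates the multi-level cascade, whose success rests entirely on the geometric decay of $\tau_n^{-1}$ defeating the polynomial- and logarithmic-in-$\tau$ growth introduced at each descent. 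Secondary care is needed in fixing the large-$\tau_0$ threshold where Lemma \ref{localized} becomes available, in the comparison of $t$- and $t_*$-slices on the bounded region $\{r'\leq r\leq R'\}$, and in confirming that three commutations indeed suffice to reach a uniform bound—matching the three derivatives recorded in \eqref{eq: E1}.
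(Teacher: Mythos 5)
Your proposal is correct and follows essentially the same route as the paper: a bootstrap over the geometric sequence $\tau_{n+1}=1.1\tau_n$ in which the slab error from Stokes' theorem is fed through the localized decay of Lemma \ref{localized}, with convergence of $\sum_n \tau_n^{-1}(\log\tau_n)^k$ closing the argument and \eqref{eq: Zcomparison} recovering the unweighted energy. The only organizational difference is the seed for the iteration: the paper starts every commutation level at linear growth using \eqref{eq: weightedZenergy} (whose error term involves only the conserved $T$-energies, via Lemma \ref{DegIntegratedDecayA}) and then iterates twice (linear $\to$ logarithmic $\to$ bounded), whereas you seed the top level with a cruder quadratic-in-$\tau$ bound from Theorem \ref{EnergyBoundA} and descend three levels --- both versions fitting within the $(m)\leq 3$ commutation budget of $E_1[\alpha]$.
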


\begin{proof}
Recall the fixed $2M < r' < R' < \infty$ determined in the density estimate (\ref{Zdensity}).  With such a fixed pair $(r', R')$, we take $\tau_0>0$ large such that $|r_{*}(r')|, |r_{*}(R')|\leq 0.1\tau_0$ and set $\tau_{i+1}=1.1\tau_{i}$. 

In the proof, we employ a simplifying notation with respect to the angular moment operators $\Omega_{i}$.  Namely, when we write simply $\Omega$, with no subscript, it is understood that summation over all three operators is intended.  Likewise, we write $\Omega^2$ and $\Omega^3$ to encode summation over all second and third derivatives, respectively.

From \eqref{eq: weightedZenergy}, we have

\begin{align*}
E^{Z,\omega^Z}_\alpha(\tau_{i})&\leq E^{Z,\omega^Z}_\alpha(\tau_0)+C\tau_{i}\left(E^{T}_\alpha(\tau_0)+ E^{T}_{\Omega\alpha}(\tau_0) \right),\\                                   
E^{Z,\omega^Z}_{\Omega\alpha}(\tau_i)&\leq E^{Z,\omega^Z}_{\Omega\alpha}(\tau_0)+C\tau_{i}\left(E^{T}_{\Omega\alpha}(\tau_0)+E^{T}_{\Omega^2\alpha}(\tau_0)\right).\\
\end{align*}

Let $\mathcal{R}_j=\{\tau_j\leq t\leq \tau_{j+1}\}\cap\{r'\leq r\leq R'\}$. Applying the localized decay estimate, Lemma \ref{localized}, we have

\begin{align*}
\int_{\mathcal{R}_j} K^{X,\omega^X}[\alpha]&\leq C\tau_j^{-2}E^{Z,\omega^Z}_\alpha(\tau_j)\\
                                         &\leq C\tau_j^{-2}E^{Z,\omega^Z}_\alpha(\tau_0)+C\tau_j^{-1}\left(E^{T}_\alpha(\tau_0)+E^{T}_{\Omega\alpha}(\tau_0) \right),\\
\int_{\mathcal{R}_j} K^{X,\omega^X}[\Omega\alpha]&\leq C\tau_{j}^{-2}E^{Z,\omega^Z}_{\Omega\alpha}(\tau_0)+C\tau_j^{-1}\left(E^{T}_{\Omega\alpha}(\tau_0)+E^{T}_{\Omega^2\alpha}(\tau_0) \right).\\
\end{align*}

Reinserting into the weighted $Z$-energy estimate, we find

\begin{align*}
E^{Z,\omega^Z}_\alpha(\tau_i)\leq &E^{Z,\omega^Z}_\alpha(\tau_0)+C\sum_{j=0}^{i-1}\int_{\mathcal{R}_j}t\left(K^{X,\omega^X}[\alpha]+K^{X,\omega^X}[\Omega\alpha]\right)\\
\leq & E^{Z,\omega^Z}_\alpha(\tau_0)+C\sum_{j=0}^{i-1} \left[\tau_i^{-1}\left( E^{Z,\omega^Z}_\alpha(\tau_0)+E^{Z,\omega^Z}_{\Omega\alpha}(\tau_0) \right)+\left(E^{T}_{\alpha}(\tau_0) + E^{T}_{\Omega\alpha}(\tau_0) + E^{T}_{\Omega^2\alpha}(\tau_0) \right)\right]\\
\leq &C\left(E^{Z,\omega^Z}_\alpha(\tau_0)+E^{Z,\omega^Z}_{\Omega\alpha}(\tau_0)\right)+C\log \tau_i\left(E^{T}_{\alpha}(\tau_0) + E^{T}_{\Omega\alpha}(\tau_0) + E^{T}_{\Omega^2\alpha}(\tau_0) \right),
\end{align*}

\noindent
where we rely upon $\tau_{i} = (1.1)^{i-1}\tau_0$, implying in particular $\log \tau_i \sim i$.

Continuing the bootstrap, we insert this improvement back into our localized decay estimate:

\begin{align*}
\int_{\mathcal{R}_{j}} K^{X,\omega^X}[\alpha]&\leq C\tau_j^{-2}E^{Z,\omega^Z}_\alpha(\tau_j)\\
                                         &\leq C\tau_j^{-2}\left( E^{Z,\omega^Z}_\alpha(\tau_0)+E^{Z,\omega^Z}_{\Omega\alpha}(\tau_0)\right)+C\tau_j^{-2}\log \tau_j \left(E^{T}_{\alpha}(\tau_0) + E^{T}_{\Omega\alpha}(\tau_0) + E^{T}_{\Omega^2\alpha}(\tau_0) \right),\\
\int_{\mathcal{R}_{j}} K^{X,\omega^X}[\Omega\alpha]&\leq C\tau_j^{-2}\left( E^{Z,\omega^Z}_{\Omega\alpha}(\tau_0)+E^{Z,\omega^Z}_{\Omega^2\alpha}(\tau_0)\right)+C\tau_j^{-2}\log \tau_j \left(E^{T}_{\Omega\alpha}(\tau_0) + E^{T}_{\Omega^2\alpha}(\tau_0) + E^{T}_{\Omega^3\alpha}(\tau_0) \right).
\end{align*}

Finally, we deduce the uniform bound on the sequence $\{ \tau_{i} \}$

\begin{align*}
E^{Z,\omega^Z}_\alpha(\tau_i) \leq &E^{Z,\omega^Z}_\alpha(\tau_0)+C\sum_{j=0}^{i-1}\int_{\mathcal{R}_j}t\left(K^{X,\omega^X}[\alpha]+K^{X,\omega^X}[\Omega\alpha]\right)\\
\leq &E^{Z,\omega^Z}_\alpha(\tau_0)+C\sum_{j=0}^{i-1} \tau_j^{-1} \left( E^{Z,\omega^Z}_{\alpha}(\tau_0)+E^{Z,\omega^Z}_{\Omega\alpha}(\tau_0)+E^{Z,\omega^Z}_{\Omega^2\alpha}(\tau_0)\right)	\\
+&C\sum_{j=1}^{i-1} \tau_j^{-1}\log \tau_j \left(E^{T}_{\alpha}(\tau_0) + E^{T}_{\Omega\alpha}(\tau_0) + E^{T}_{\Omega^2\alpha}(\tau_0) + E^{T}_{\Omega^3\alpha}(\tau_0)\right)\\
\leq &C\left( E^{Z,\omega^Z}_{\alpha}(\tau_0)+E^{Z,\omega^Z}_{\Omega\alpha}(\tau_0)+E^{Z,\omega^Z}_{\Omega^2\alpha}(\tau_0)+\left(E^{T}_{\alpha}(\tau_0) + E^{T}_{\Omega\alpha}(\tau_0) + E^{T}_{\Omega^2\alpha}(\tau_0) + E^{T}_{\Omega^3\alpha}(\tau_0)\right)\right)
\end{align*}

The extension to all large $\tau \geq \tau_0$ is obvious, and as well for those $0 \leq \tau < \tau_0$ in a temporally compact region.
\end{proof}

\subsection{Uniform Decay}
\subsubsection{Integrated Decay}

We define the family of spacelike hypersurfaces $\tilde{\Sigma}_{\tau}$ by the conditions

\begin{align}\label{decayFoliation}
\begin{split}
\tau &= t+2M\log(r-2M)+c_0\textup{, for}\ r\leq 3M,\\
 &= t-\sqrt{r^2+1}+c_1\textup{, for}\ r \geq 20M,
\end{split}
\end{align}

\noindent
with the specification in the spatially compact region $3M < r < 20M$ and the choice of constants $c_0$ and $c_1$ made in such a way that $u,v\geq \tau$ on $\tilde{\Sigma}_\tau$.  Further, we define the $N$-energy on $\tilde{\Sigma}_{\tau}$ by

\begin{equation}
E^{N}_{\alpha}(\tilde{\Sigma}_{\tau}) := \int_{\tilde{\Sigma}_\tau}J^N_{a} \eta^{a}_{\tilde{\Sigma}_{\tau}}.
\end{equation}

\begin{theorem}\label{NondegIntegratedDecayA}
Suppose $\alpha$ is a solution of (\ref{eq: RW1}), smooth and compactly supported at $\{ t = 0 \}$.  Then $\alpha$ satisfies the integrated decay estimate

\begin{equation}
E^{N}_{\alpha}(\tilde{\Sigma}_{\tau})\leq CE_1[\alpha]\tau^{-2},
\end{equation}

\noindent
uniformly on the hypersurfaces $\tilde{\Sigma}_{\tau}$ \eqref{decayFoliation} defined above, with the initial energy $E_1$ defined as before (\ref{eq: E1}).
\end{theorem}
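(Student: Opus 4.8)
The plan is to extract decay from the uniform boundedness of the conformal $Z$-energy established in Lemma \ref{weightedZbound}, exploiting the quadratic weights $u^2, v^2$ built into the $Z$-energy density together with the defining property $u, v \geq \tau$ of the foliation (\ref{decayFoliation}). The initial energy $E_1[\alpha]$ is, up to constants, precisely the initial conformal energy, so Lemma \ref{weightedZbound} already reads $E^{Z}_{\alpha}(\tau) \leq C E_1[\alpha]$; the content of the present theorem is to convert this conserved-in-magnitude quantity into genuine $\tau^{-2}$ decay of the \emph{non-degenerate} flux through the $\tilde{\Sigma}_{\tau}$.

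First I would transfer the conformal energy bound from the level sets $\{t = \tau\}$ to the hypersurfaces $\tilde{\Sigma}_{\tau}$. Applying Stokes' theorem to $J^{Z,\omega^{Z}}$ in the region bounded by a slice $\{t = \tau'\}$ and $\tilde{\Sigma}_{\tau}$, and invoking the density sign and control estimates (\ref{Zdensity}) together with the degenerate integrated decay of Lemma \ref{DegIntegratedDecayA} (applied as well to the commuted quantities $\Omega_i\alpha$, exactly as in the bootstrap of Lemma \ref{weightedZbound}), gives the flux bound $\int_{\tilde{\Sigma}_{\tau}} J^{Z}_{a}\eta^{a} \leq C E_1[\alpha]$, uniformly in $\tau$. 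Next, on $\tilde{\Sigma}_{\tau}$ the construction guarantees $u, v \geq \tau$, so the weights $u^2, v^2, u^2 + v^2$ in the $Z$-energy density are all bounded below by $\tau^2$. Hence in the region away from the horizon (where $N = T$ for $r \geq R_0$ and $J^{N}_{a}\eta^{a}\sim J^{T}_{a}\eta^{a}$ for $r_0 \leq r \leq R_0$ by (\ref{Nestimates})) the integrand of $E^{Z}$ dominates $\tau^2$ times the non-degenerate energy density, yielding
\[
\int_{\tilde{\Sigma}_{\tau}\cap\{r \geq r_0\}} J^{N}_{a}\eta^{a} \leq \frac{C}{\tau^2}\int_{\tilde{\Sigma}_{\tau}} J^{Z}_{a}\eta^{a} \leq \frac{C}{\tau^2} E_1[\alpha].
\]
Note the photon sphere poses no difficulty here, as the conformal energy does not degenerate there.

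It remains to recover the same decay in the horizon region $\{2M \leq r \leq r_0\}$, where the conformal field $Z$ becomes null and supplies no control; this is the crux. Here I would invoke the red-shift coercivity $K^{N} \geq c J^{N}_{a}N^{a}$ from (\ref{Nestimates}). Integrating this over the sliver between two successive leaves $\tilde{\Sigma}_{\tau'}$ and $\tilde{\Sigma}_{\tau}$ localized to $r \leq r_0$: the flux through the timelike wall $r = r_0$ is already known to decay like $\tau^{-2}$ by the previous step, while the spacetime error in the transition region $r_0 \leq r \leq R_0$ is absorbed using the integrated-decay estimate of Lemma \ref{DegIntegratedDecayA}, which itself carries $\tau^{-2}$ decay once combined with the conformal bound. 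A dyadic pigeonhole over the foliation—selecting on each dyadic interval an intermediate slice on which the localized $N$-flux is comparable to its time-average, then propagating forward with the red-shift—then upgrades the degenerate decay to the full non-degenerate bound $E^{N}_{\alpha}(\tilde{\Sigma}_{\tau}) \leq C E_1[\alpha]\,\tau^{-2}$.

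The main obstacle is precisely this final horizon step. Away from $r = 2M$ the decay is essentially immediate from the weighted conformal energy, but the red-shift argument must be arranged so that neither the flux across $r = r_0$ nor the integrated-decay error terms cost a power of $\tau$; getting the dyadic mean-value iteration and the attendant constants to close without loss, in a manner independent of the particular leaf, is the technical heart of the proof, and follows the scheme of \cite{DRClay}.
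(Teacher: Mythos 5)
Your proposal is correct and follows essentially the same two-step strategy as the paper: first exploit the uniform bound on the weighted $Z$-energy together with the property $u,v\geq\tau$ on $\tilde{\Sigma}_{\tau}$ to obtain $\tau^{-2}$ decay of the degenerate flux away from the horizon, then propagate this into the region $2M\leq r\leq r_0$ via the red-shift coercivity $K^{N}\geq cJ^{N}_{a}N^{a}$ and an integral-inequality/pigeonhole iteration. The only cosmetic difference is that the paper moves the $T$-flux onto $\tilde{\Sigma}_{\tau}$ by exact $T$-energy conservation from the slices $\{t=s\}\cap J^{+}(\tilde{\Sigma}_{\tau})$ (letting $s\to\infty$), whereas you transfer the $Z$-flux directly by Stokes' theorem; both yield the same intermediate bound $\int_{\tilde{\Sigma}_{\tau}}J^{T}_{a}\eta^{a}\leq CE_1\tau^{-2}$.
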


\begin{proof}
Having defined $\tilde{\Sigma}_{\tau}$ so that $\inf_{\tilde{\Sigma}_{\tau}}u\geq \tau$ and $\inf_{\tilde{\Sigma}_{\tau}}v \geq \tau$ , we have for any $s > 0$

\begin{align*}
CE_1&\geq \int_{\{t=s\}\cap J^+(\tilde{\Sigma}_\tau)} J^{Z,\omega^Z}_{a} \eta^{a}_{s} \\
&\geq c\tau^2\int \int_{\{t=s\}\cap J^+(\tilde{\Sigma}_\tau)} J^T_{a} \eta^{a}_{s} = c\tau^2\int_{\tilde{\Sigma}_\tau\cap J^{-}(\{t=s\})} J^T_{a} \eta^{a}_{\tilde{\Sigma}_\tau},
\end{align*}

\noindent
where we have used the uniform boundedness of the weighted $Z$-energy, Lemma \ref{weightedZbound}, and the comparison between weighted and unweighted $Z$-energies \eqref{eq: Zcomparison}.

Taking $s\to\infty$ we have 

\begin{align*}
\int_{\tilde{\Sigma}_\tau} J^T_{a} \eta^{a}_{\tilde{\Sigma}_\tau}\leq CE_1\tau^{-2}.
\end{align*}

It remains to upgrade to an integrated decay estimate for the non-degenerate red-shift energy.  We remind the reader that the details of the red-shift multiplier $N$ appear in Appendix C; in particular, $N$ satisfies

 \begin{align*}
 K^{N} &\geq{c J^{N}_{a}N^{a}},\ &&\textup{for}\ 2M\leq r \leq r_0,\\
 J^{N}_{a}N^{a} &\sim J^{T}_{a}T^{a}, \ &&\textup{for}\ r_0 \leq r \leq R_0,\\
 |K^{N}| &\leq  C|J^{T}_{a}T^{a}|, \ &&\textup{for}\ r_0 \leq r \leq R_0,\\
 N &= T, \ &&\textup{for}\ r\geq R_0,
 \end{align*}
 
 \noindent
 where $2M < r_0 < R_0 < \infty$ can be regarded as fixed, say $r_0 = 3M$ and $R_0 = 10M$ in light of our hypersurface definition \eqref{decayFoliation}.

In what follows, we denote by $\mathcal{R}(\tau', \tau)$ the spacetime region between the hypersurfaces $\tilde{\Sigma}_{\tau'}$ and $\tilde{\Sigma}_{\tau}$, with $0\leq{\tau'}\leq \tau$.  Concretely, $\mathcal{R}(\tau', \tau) = (J^{+}(\tilde{\Sigma}_{\tau'})\setminus J^{+}(\tilde{\Sigma}_{\tau}))
\cap J^{-}(\mathcal{I}^{+})$.

Applying the divergence theorem in the spacetime region $\mathcal{R}(\tau', \tau)$, we deduce

\begin{align*}
&\int_{\tilde{\Sigma}_{\tau}\cap\{2M\leq r\leq r_0\}}J^{N}_{a} \eta^{a}_{\tilde{\Sigma}_{\tau}}+\int_{\tilde{\Sigma}_{\tau}\cap\{r\geq r_0\}}J^{N}_{a} \eta^{a}_{\tilde{\Sigma}_{\tau}}+\int_{\mathcal{R}(\tau',\tau)\cap\{2M\leq r\leq r_0\}}K^{N}\\
\leq &\int_{\tilde{\Sigma}_{\tau'}\cap\{2M\leq r\leq r_0\}}J^{N}_{a} \eta^{a}_{\tilde{\Sigma}_{\tau'}}+\int_{\tilde{\Sigma}_{\tau'}\cap\{r\geq r_0\}}J^{N}_{a} \eta^{a}_{\tilde{\Sigma}_{\tau'}}+\int_{\mathcal{R}(\tau',\tau)\cap\{r_0\leq r\leq R_0\}}|K^{N}|
\end{align*}

Defining 

\begin{equation}
g(\tau):=\int_{\tilde{\Sigma}_{\tau}\cap\{2M\leq r\leq r_0\}}J^{N}_{a} \eta^{a}_{\tilde{\Sigma}_{\tau}},
\end{equation}

\noindent
and noting that, away from the event horizon

$$
\int_{\tilde{\Sigma}_{\tau}\cap\{r\geq r_0\}}J^{N}_{a} \eta^{a}_{\tilde{\Sigma}_{\tau}}\sim \int_{\tilde{\Sigma}_{\tau}\cap\{r\geq R_0\}}J^{T}_{a} \eta^{a}_{\tilde{\Sigma}_{\tau}}\leq CE_1\tau^{-2}, 
$$

\noindent
we obtain an integral inequality, just as in the earlier discussion of red-shift:

\begin{equation}
g(\tau)+c\int_{\tau'}^{\tau} g(s)ds \leq g(\tau')+CE_1\max\{\tau-\tau',1\}(\tau')^{-2}
\end{equation}

Employing a bootstrap argument, we find $g(\tau)\leq CE_1\tau^{-2}$, and the proof is complete.

\end{proof}

\subsubsection{Pointwise Decay}
The non-degerate integrated decay estimate above can be strengthened to a statement of pointwise decay by commutation with the standard Killing fields and application of various Sobolev embedding theorems, just as in the earlier section on uniform boundedness.

\begin{theorem}\label{pointwisedecayA}
Suppose $\alpha$ is a solution of (\ref{eq: RW1}), smooth and compactly supported at $\{ t = 0 \}$.  We define the initial energy

\begin{equation}\label{E2}
E_2[\alpha] := \sum_{(m)\leq 6}\int_{\{t = 0 \}} r^2J^{N}_{a}[\Omega^{(m)}\alpha]\eta^{a}.
\end{equation}

Expressed in this energy, $\alpha$ satisfies the uniform decay estimate 

\begin{equation}
\sup_{\tilde{\Sigma}_{\tau}} |\alpha| \leq C\sqrt{E_2[\alpha]}\tau^{-1}
\end{equation}

\noindent
on the family of hypersurfaces $\tilde{\Sigma}_{\tau}$ (\ref{decayFoliation}) specified above.
\end{theorem}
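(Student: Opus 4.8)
The plan is to upgrade the integrated energy decay of Theorem \ref{NondegIntegratedDecayA} to a pointwise statement by the very same commutation-and-Sobolev scheme already used to pass from the uniform energy bound to the pointwise bound in Theorem \ref{PointwiseBoundA}; the only change is that we now feed in the decaying fluxes $E^{N}_{\alpha}(\tilde{\Sigma}_{\tau})\leq CE_1[\alpha]\tau^{-2}$ rather than the uniformly bounded ones. First I would record that, because the angular Killing fields $\Omega_i$ commute with $\slashed{\Box}_{\mathcal{L}(-2)}$ and the potential $V=\frac{4}{r^2}(1-\mu)$ is purely radial, each $\Omega^{(m)}\alpha$ is again a solution of the Regge-Wheeler equation (\ref{eq: RW1}). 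Theorem \ref{NondegIntegratedDecayA} therefore applies verbatim to every commuted quantity, yielding
\[
E^{N}_{\Omega^{(m)}\alpha}(\tilde{\Sigma}_{\tau})\leq C\, E_1[\Omega^{(m)}\alpha]\,\tau^{-2}.
\]

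Next I would run the radial-plus-spherical Sobolev argument of Theorem \ref{PointwiseBoundA} slice-by-slice along $\tilde{\Sigma}_{\tau}$. Letting $\rho$ be the geodesic radial coordinate on $\tilde{\Sigma}_{\tau}$, the fundamental theorem of calculus followed by Cauchy--Schwarz against the weight $(\rho')^2$ trades a pointwise value for the radial flux $\int_{\rho}^{\infty}|\slashed{\nabla}_{\rho}\alpha|^2(\rho')^2 d\rho'$, while the Sobolev inequality on $S^2$ controls the remaining angular dependence at the cost of two further $\Omega$-derivatives. Combining the two and summing over the relevant multi-indices gives
\[
\sup_{\tilde{\Sigma}_{\tau}}|\alpha|^2\leq C\sum_{(m)\leq 2}E^{N}_{\Omega^{(m)}\alpha}(\tilde{\Sigma}_{\tau})\leq C\sum_{(m)\leq 2}E_1[\Omega^{(m)}\alpha]\,\tau^{-2}.
\]
Since each $E_1[\Omega^{(m)}\alpha]$ is itself an initial energy carrying up to three additional angular derivatives, the double sum is dominated by the energy $E_2[\alpha]$ defined in (\ref{E2}); taking square roots produces the claimed $\tau^{-1}$ pointwise decay.

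The main obstacle is geometric rather than analytic: the argument must be executed on the curved, non-static slices $\tilde{\Sigma}_{\tau}$, which interpolate between an ingoing red-shift region near $\mathcal{H}^{+}$ and an asymptotically null piece near $\mathcal{I}^{+}$, in contrast with the constant-$t$ slices used in Theorem \ref{PointwiseBoundA}. I would need to verify that the non-degenerate flux $J^{N}_{a}\eta^{a}_{\tilde{\Sigma}_{\tau}}$ genuinely dominates the weighted radial integrand of the Sobolev step uniformly in $r$: that the transverse derivative is controlled up to and on the horizon, where the red-shift structure of $N$ is essential, and that the weights degenerate compatibly as $r\to\infty$ along the asymptotically null portion of the foliation, so that the bound is uniform all the way to null infinity. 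Once this uniform comparison is secured, the remainder is exactly the one-dimensional and spherical Sobolev bookkeeping already rehearsed in Theorem \ref{PointwiseBoundA}, and the $\tau^{-2}$ energy decay transfers directly to the $\tau^{-1}$ supremum bound.
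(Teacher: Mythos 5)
Your outline coincides with the paper's proof: commute with the $\Omega_i$, apply the integrated decay of Theorem \ref{NondegIntegratedDecayA} to each $\Omega^{(m)}\alpha$, and run the radial fundamental-theorem-of-calculus plus spherical Sobolev argument of Theorem \ref{PointwiseBoundA} on each slice $\tilde{\Sigma}_{\tau}$, with the derivative count $2+3\leq 6$ absorbed into $E_2[\alpha]$. The one step you explicitly defer --- verifying that the flux $J^{N}_{a}\eta^{a}_{\tilde{\Sigma}_{\tau}}$ dominates the weighted radial integrand uniformly out to null infinity --- is in fact the entire substantive content of the paper's proof, so you should be aware that it is not a routine check and that the naive comparison fails: on the asymptotically null portion of $\tilde{\Sigma}_{\tau}$ the normal becomes null and the flux only controls $\frac{c}{r}|\nabla_{\tilde{\Sigma}_{\tau}}\alpha|^2$, a full power of $r$ weaker than what the unweighted Sobolev step would demand.

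The paper closes this in two matching halves. First, writing $R:=t\partial_r+r\partial_t$, $\eta_{\tilde{\Sigma}_{\tau}}\sim r\partial_r+t\partial_t$ and $N=T\sim t\eta_{\tilde{\Sigma}_{\tau}}-rR$ near $\mathcal{I}^{+}$, the algebraic positivity of $T_{ab}$ gives $J^{N}_{a}\eta^{a}_{\tilde{\Sigma}_{\tau}}\geq \frac{t-r}{2}\left(|\eta_{\tilde{\Sigma}_{\tau}}\alpha|^2+|R\alpha|^2\right)+\dots\geq \frac{c}{r}|\nabla_{\tilde{\Sigma}_{\tau}}\alpha|^2$, quantifying the degeneration as exactly one power of $r$. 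Second, because $\tilde{\Sigma}_{\tau}$ is asymptotically null there, the geodesic radial coordinate grows only logarithmically in $r$, i.e.\ $r\sim e^{\rho}$, so the Sobolev weight satisfies $\rho^2/r^2\leq 1/r$; the loss in the flux is therefore precisely compensated by the gain in the weight, and the slice integral $\int_{\tilde{\Sigma}_{\tau}}\frac{\rho^2}{r^2}\left(|\nabla_{\tilde{\Sigma}_{\tau}}\alpha|^2+|\nabla_{\tilde{\Sigma}_{\tau}}\Omega\alpha|^2+|\nabla_{\tilde{\Sigma}_{\tau}}\Omega^2\alpha|^2\right)$ is bounded by the sum of the commuted $N$-fluxes, hence by $CE_2\tau^{-2}$. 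Without this pairing (had one estimated $\rho\sim r$ globally, as on the constant-$t_{*}$ slices of Theorem \ref{PointwiseBoundA}) the argument would not close, so your proposal is correct in structure but incomplete until this comparison is supplied.
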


\begin{proof}

First, note that the quantity $J^N_{a} \eta^{a}_{\tilde{\Sigma}}$ is non-degenerate away from null infinity, with $J^{N}_{a}\eta^{a}_{\tilde{\Sigma}} \geq c|\nabla_{\tilde{\Sigma}}\alpha|^2$.

Near null infinity, recall that $\tilde{\Sigma}_{\tau}$ is defined such that $\tau \sim t-\sqrt{r^2+1}$, with hyperbolic geometry.  In this regime, we have on $\tilde{\Sigma}_{\tau}$

\begin{align*}
&R  :=t\partial_r+r\partial_t,\\
&\eta_{\tilde{\Sigma}_{\tau}}\sim r\partial_r+t\partial_t,\\
&N=T =\partial_t \sim t\eta_{\tilde{\Sigma}_{\tau}}-rR,
\end{align*}

\noindent
where $R$ can be thought of as a radial derivative along $\tilde{\Sigma}_{\tau}$.  We find the comparison

\begin{align*}
J^{N}_{a}\eta_{\tilde{\Sigma}_{\tau}}^{a}&=T_{ab}\eta_{\tilde{\Sigma}_{\tau}}^{a}T^{b}\\
		      &=tT_{ab}\eta_{\tilde{\Sigma}_{\tau}}^{a}\eta_{\tilde{\Sigma}_{\tau}}^{b}-rT_{ab}\eta_{\tilde{\Sigma}_{\tau}}^{a}R^{b}\\
                       &=\frac{t}{2}\left( |\eta_{\tilde{\Sigma}_{\tau}}\alpha|^2+|R\alpha|^2+|\tilde{\slashed\nabla}\alpha|^2+V|\alpha|^2  \right)-r\left(\eta_{\tilde{\Sigma}_{\tau}}\alpha\cdot R\alpha \right)\\
                       &\geq\frac{t-r}{2}\left(|\eta_{\tilde{\Sigma}_{\tau}}\alpha|^2+|R\alpha|^2 \right) +\frac{t}{2}\left(|\slashed\nabla\alpha|^2+V|\alpha|^2\right)\\
                       &\geq \frac{c}{r}|\nabla_{\tilde{\Sigma}_{\tau}}\alpha|^2.
\end{align*}

Now let $\rho$ be the geodesic radial coordinate on $\tilde{\Sigma}$, with $\rho=1$ on the horizon. As before, we compute
\begin{align*}
|\alpha|\leq &\int_1^\infty |\slashed{\nabla}_{\rho}\alpha|d\rho \leq C \left(\int_1^\infty |\slashed{\nabla}_{\rho}\alpha|^2\rho^2 d\rho\right)^{1/2}\\
      \leq &C\left( \int_1^\infty d\rho\int_{S^2}d\sigma \rho^2\left( |\slashed{\nabla}_{\rho}\alpha|^2+|\slashed{\nabla}_{\rho}\Omega\alpha|^2+|\slashed{\nabla}_{\rho}\Omega^2\alpha|^2 \right) \right)^{1/2}\\
      \leq&C\left(\int_{\tilde{\Sigma}_{\tau}} \frac{\rho^2}{r^2}\left( |\nabla_{\tilde{\Sigma}_{\tau}}\alpha|^2+|\nabla_{\tilde{\Sigma}_{\tau}}\Omega\alpha|^2+|\nabla_{\tilde{\Sigma}_{\tau}}\Omega^2\alpha|^2\right)\right)^{1/2}
\end{align*}

Away from null infinity, $\rho$ and $r$ are comparable. Near null infinity, $r \sim e^\rho$, so that $\rho^2/r^2\leq 1/r$.  Hence
 
\begin{align*}
|\alpha| \leq &C\int_{\tilde{\Sigma}_{\tau}} \frac{\rho^2}{r^2}\left( |\nabla_{\tilde{\Sigma}_{\tau}}\alpha|^2+|\nabla_{\tilde{\Sigma}_{\tau}}\Omega\alpha|^2+|\nabla_{\tilde{\Sigma}_{\tau}}\Omega^2\alpha|^2\right)\\
\leq &C\int_{\tilde{\Sigma}}J^{N}_{a}[\alpha]\eta^{a}_{\tilde{\Sigma}_{\tau}}+J^{N}_{a}[\Omega\alpha]\eta^{a}_{\tilde{\Sigma}_{\tau}}+J^{N}_{a}[\Omega^2\alpha]\eta^{a}_{\tilde{\Sigma}_{\tau}}\\
\leq &CE_2\tau^{-2},
\end{align*}

and the decay estimate $|\alpha|\leq C\sqrt{E_2}\tau^{-1}$ is established.

\end{proof}

We remark that, as in the proofs of uniform boundedness, the argument for uniform decay applies for generic families of spacelike hypersurfaces 
$\tilde{\Sigma}_{\tau}$, in this case to those passing through $\mathcal{H}^{+}$ away from the bifurcation sphere and asymptotic to null infinity.

\section{Decoupling $\beta$}

Recall the governing equations

\begin{align}
R_{t\phi}^{(1)} = 0 &\rightarrow (r^2\beta)_{r} + \frac{r^4}{\Delta}\sqrt{2}r\bar{\partial}\gamma = 0,\label{R0phi}\\
R_{r\phi}^{(1)} = 0 &\rightarrow \sqrt{2}r\bar{\partial}\alpha + \beta_{t},\label{R2phi}\\
R_{\theta\phi}^{(1)} = 0 &\rightarrow (r^2\alpha)_{r} -\frac{r^4}{\Delta}\gamma_{t} = 0,\label{R3phi}\\
d^2\zeta^{(1)} = 0 &\rightarrow \frac{1}{r^2}\sqrt{2}r \partial \beta - \gamma_{r} + \frac{r^2}{\Delta}\alpha_{t} = 0.\label{betaAgain}
\end{align}

Applying the operator $\sqrt{2}r\bar\partial$ to (\ref{betaAgain}) and differentiating (\ref{R2phi}) in $t$ and (\ref{R3phi}) in $r$, we have

\begin{gather*}
\frac{-r^2}{\Delta}\sqrt{2}r\bar\partial\alpha_{t} + \frac{-r^2}{\Delta}\beta_{tt} = 0,\\
\left(\frac{\Delta}{r^4}(r^2\beta)_{r}\right)_{r} + \sqrt{2}r\bar\partial \gamma_{r} = 0,\\
2\bar\partial\partial \zeta - \sqrt{2}r\bar\partial \gamma_{r} + \frac{r^2}{\Delta}\sqrt{2}r\bar\partial{\alpha_{t}} = 0.
\end{gather*}

Adding the three equations together, we isolate $\beta$:

$$\frac{r^2}{\Delta}\beta_{tt} + \left(\frac{\Delta}{r^4}(r^2 \beta)_{r}\right)_{r} + 2\bar\partial\partial \beta = 0,$$

\noindent
or, expanding and relating the two Laplacians,

\begin{equation}
\frac{-r^2}{\Delta}\beta_{tt} + \frac{\Delta}{r^2}\beta_{rr} + \frac{\Delta_{r}}{r^2}\beta_{r} + \Delta_{\mathcal{E}(-2)}\beta = W\beta,
\end{equation}

\noindent
where $W = \frac{1}{r^2}\left(1-\frac{8M}{r}\right)$.  Rewritten in this form, $\beta$ is seen to be a solution of a Regge-Wheeler equation

\begin{equation}\label{eq: RW2}
\slashed{\Box}_{\mathcal{L}(-1)} \beta = W\beta
\end{equation}

\noindent
with potential $W = \frac{1}{r^2}\left(1-\frac{8M}{r}\right)$.

We study \eqref{eq: RW2} by means of the spin-weighted spherical harmonics discussed in Appendix \ref{spinHarmonics}.  In particular, denoting by $\beta_1$ the orthogonal $(L^2)$ projection of $\beta$ on to the first eigenspace associated with the operator $\slashed{\Delta}_{\mathcal{E}(-2)}$, and letting $\beta_{\ell > 1}$ be its orthogonal complement, the equation \eqref{eq: RW2} splits as 

\begin{align}
\slashed{\Box}_{\mathcal{L}(-1)} \beta_1 &= W \beta_1,\label{beta1}\\
\slashed{\Box}_{\mathcal{L}(-1)} \beta_{\ell > 1} &= W \beta_{\ell > 1},
\end{align}

\noindent
permitting a separate study of the two. 

\section{Analysis of $\beta_1$}

Projecting to the lowest mode, we find

\begin{equation}
\beta_1 = \langle \beta, Y_{110} \rangle Y_{110},
\end{equation}

\noindent
with $\langle \cdot, \cdot \rangle$ denoting the $L^2$ inner product on a round sphere of symmetry, and where our condition of axisymmetry removes from consideration the remaining eigensections $Y_{111}$ and $Y_{11-1}$.  Concretely, $Y_{110} = \sin^2\theta \Psi_{-1}$, per Appendix \ref{spinHarmonics}.

\begin{lemma}\label{beta1t}
The section $\beta_1$ is time-independent, with $\beta_{1,t} = 0.$
\end{lemma}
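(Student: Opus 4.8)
The claim is that $\beta_1$, the projection of $\beta$ onto the lowest spin-weighted spherical harmonic, is time-independent. The plan is to exploit the fact that $\beta_1$ lives in a one-dimensional (after the axisymmetry restriction) eigenspace, combined with the structure of the Regge-Wheeler equation \eqref{beta1} and the special value of the potential $W = \frac{1}{r^2}(1 - \frac{8M}{r})$ on this lowest mode. The key observation is that on the first eigenspace the angular operator $\slashed{\Delta}_{\mathcal{E}(-2)}$ acts as a specific constant, and this constant should conspire with the potential term to kill the effective radial-temporal operator acting on the time-derivative.

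First I would write $\beta_1 = h(t,r)\, Y_{110} = h(t,r)\sin^2\theta\,\Psi_{-1}$, reducing \eqref{beta1} to a scalar equation in $h(t,r)$ alone. The angular part of $\slashed{\Box}_{\mathcal{L}(-1)}$ applied to $\sin^2\theta\,\Psi_{-1}$ should produce, via the $\mathcal{L}(-1)$ d'Alembertian formula, a specific multiple of the section, so that the equation becomes a $(1+1)$-dimensional wave-type equation for $h$ with an effective potential. Next I would differentiate this scalar equation in $t$, setting $k := h_t$, and observe that $k$ satisfies the same equation. The crucial step is to show that the resulting operator acting on $k$ is, in fact, \emph{coercive} or rigid enough that the only finite-energy (here, compactly-supported-data) solution that is consistent with $k$ being genuinely in the lowest mode forces $k \equiv 0$; equivalently, the lowest-mode potential is precisely tuned so that the time-independent ODE in $r$ admits the relevant solution while excluding time-dependence.

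Concretely, I expect the mechanism to be the following: on the lowest mode the effective radial ODE governing stationary solutions matches the linearized Kerr data \eqref{KerrData}, where $\beta = \frac{-6Ma^{(1)}}{r^2}\sin^2\theta\,\Psi_{-1}$ is manifestly $t$-independent. The argument should show that the $\ell=1$ sector of the Regge-Wheeler equation supports \emph{only} such stationary solutions — the negative/degenerate behavior of $W$ at this mode eliminates any propagating (time-dependent) content. I would likely proceed by taking the $t$-derivative, integrating the resulting homogeneous equation for $h_t$ against $h_t$ itself over a slice, and using the boundary decay from compact support together with the sign of the relevant coefficient to conclude $h_t \equiv 0$.

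The main obstacle will be establishing the exact sign and vanishing at the level of the energy identity for $h_t$: one must verify that, restricted to the lowest eigenvalue, the combination of $\Delta_{\mathcal{E}(-2)}\beta_1$ and $W\beta_1$ produces an operator whose associated energy for $h_t$ has no room for nontrivial dynamics — i.e., that the "mass term" obstructing a standard energy estimate is either absent or has the favorable sign precisely at $\ell = 1$. This is delicate because $W$ is negative for $r < 8M$, so naive positivity fails; the resolution must come from the specific eigenvalue of $\slashed{\Delta}_{\mathcal{E}(-2)}$ on the first mode exactly cancelling or dominating the bad part of $W$, leaving a rigidity statement that forces time-independence.
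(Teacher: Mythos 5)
Your proposed mechanism does not exist: the decoupled Regge--Wheeler equation \eqref{beta1} cannot, by itself, force time-independence of its lowest sector. Writing $\beta_1 = h(t,r)\,Y_{110}$ and inserting the eigenvalue of $\slashed{\Delta}_{\mathcal{E}(-2)}$ on the first mode, \eqref{beta1} reduces to
$$\frac{-r^2}{\Delta}h_{tt} + \frac{\Delta}{r^2}h_{rr} + \frac{2r-2M}{r^2}h_{r} = \frac{2}{r^2}\Big(1-\frac{4M}{r}\Big)h,$$
which, in the tortoise coordinate, is an ordinary $(1+1)$-dimensional wave equation with a bounded effective potential. It is well posed for arbitrary data $(h,h_t)|_{t=0}$ and so admits an abundance of genuinely time-dependent solutions; no tuning or sign of the potential can yield the rigidity you describe, since even a everywhere-positive potential only gives energy conservation, never $h_t\equiv 0$. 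The point is that the lemma is \emph{not} a statement about arbitrary solutions of \eqref{eq: RW2}: it holds only for the $\beta$ arising from an actual axial solution $(\alpha,\beta,\gamma)$ of the linearized Einstein system, and the decoupling procedure that produces \eqref{eq: RW2} discards precisely the constraint that makes it true.

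The missing ingredient is the first-order equation \eqref{R2phi}, $\sqrt{2}r\bar{\partial}\alpha + \beta_t = 0$, which gives $\beta_{1,t} = -\sqrt{2}r(\bar{\partial}\alpha)_1$. The paper then kills the right-hand side by integration by parts on the sphere: $\langle \bar{\partial}\alpha, Y_{11m}\rangle = -\langle \alpha, \partial Y_{11m}\rangle$, and $\partial Y_{11m} = 0$ because the eigenvalue relation $\slashed{\Delta}_{\mathcal{E}(-2)}Y_{11m} = -Y_{11m}$ combined with $\slashed{\Delta}_{\mathcal{E}(-2)} = 2\bar{\partial}\partial - 1$ forces $\|\partial Y_{11m}\|^2 = 0$. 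In other words, the lowest spin-weighted harmonics lie in the kernel of the raising operator $\partial$, so the source $\bar{\partial}\alpha$ driving $\beta_t$ has no component in the first eigenspace. Your observation that the stationary $\ell=1$ solutions match the linearized Kerr data is correct, but it is used in the \emph{subsequent} theorem (solving the resulting radial ODE and normalizing away $\beta_1$), not in proving $\beta_{1,t}=0$.
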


\begin{proof}
Projecting the linearized Einstein equation \eqref{R2phi}, 

$$
\sqrt{2}r\bar{\partial}\alpha + \beta_{t} = 0,
$$

\noindent
it suffices to show that

$$ (\bar{\partial}\alpha)_1 = 0.$$

The argument for the vanishing of  $(\bar{\partial}\alpha)_1$ proceeds as follows.  Letting $Y_{11m}$ be an eigensection of $\mathcal{E}(-2)$, the coefficients of $(\bar{\partial}\alpha)_1$ are given by

$$\langle \bar\partial\alpha, Y_{11m} \rangle = \langle \alpha, \bar\partial^{\dagger} Y_{11m} \rangle = -\langle \alpha, \partial Y_{11m} \rangle,$$

\noindent
and it suffices to show 

$$ \partial Y_{11m} = 0.$$

To this end, we note that

\begin{align*}
\slashed{\Delta}_{\mathcal{E}(-2)} Y_{11m} &= -Y_{11m},\\
\slashed{\Delta}_{\mathcal{E}(-2)} Y_{11m} &= 2\bar\partial \partial Y_{11m} - Y_{11m}.
\end{align*}

Taken together, the two imply

$$ \langle \slashed{\Delta}_{\mathcal{E}(-2)} Y_{11m}, Y_{11m} \rangle = -|Y_{11m}|^2 = \langle 2\bar\partial \partial Y_{11m}, Y_{11m} \rangle - |Y_{11m}|^2,$$

\noindent
such that

$$ \langle \bar\partial \partial Y_{11m}, Y_{11m} \rangle = 0.$$

Rewriting as 

$$\langle \bar\partial \partial Y_{11m}, Y_{11m} \rangle = \langle \partial Y_{11m}, \bar\partial^{\dagger} Y_{11m} \rangle = -\langle \partial Y_{11m}, \partial Y_{11m} \rangle,$$

\noindent
we see that $\partial Y_{11m} = 0,$ and the proof is complete.
\end{proof}

\begin{theorem}
Assuming $\beta$ satisfies a suitable decay condition at infinity, as in any asymptotically flat solution $(\alpha, \beta, \gamma)$, the lowest mode $\beta_1$ can be made to vanish by the addition of a suitable linearized Kerr solution.
\end{theorem}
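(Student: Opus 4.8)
The plan is to exploit two facts already established in the excerpt. First, the axial linearized Kerr solution \eqref{KerrData} has $\alpha = \gamma = 0$ and $\beta = \frac{-6Ma^{(1)}}{r^2}\sin^2\theta\,\Psi_{-1} = \frac{-6Ma^{(1)}}{r^2}Y_{110}$, which lies \emph{entirely} in the lowest spherical mode. Hence adding a linearized Kerr solution changes only $\beta_1$, leaving $\beta_{\ell>1}$, $\alpha$, and $\gamma$ untouched, and (by linearity of the equations) preserves the property of being an axial solution. It therefore suffices to show that $\beta_1$ itself has exactly the radial profile of a Kerr mode, so that a single choice of the angular-momentum parameter $a^{(1)}$ cancels it.

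I would begin by writing $\beta_1 = c(r)\,Y_{110}$ with $Y_{110} = \sin^2\theta\,\Psi_{-1}$, where the absence of time dependence is precisely Lemma \ref{beta1t}. Substituting into the projected Regge-Wheeler equation \eqref{beta1} and using $\slashed{\Delta}_{\mathcal{E}(-2)}Y_{110} = -Y_{110}$ together with the $\mathcal{L}(-1)$ d'Alembertian formula, the angular factor $\sin^2\theta$ cancels throughout and the vanishing of $c_{tt}$ collapses everything to a second-order radial ordinary differential equation,
\begin{equation}
\Delta c'' + \Delta_r c' - \left(2 - \frac{8M}{r}\right)c = 0.
\end{equation}

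Next I would solve this ODE. A direct substitution verifies that $c_1(r) = r^{-2}$ is a solution, which is exactly the Kerr coefficient in \eqref{KerrData}. Since Abel's formula gives a Wronskian proportional to $\Delta^{-1}$, reduction of order produces a second, linearly independent solution
\begin{equation}
c_2(r) = r^{-2}\int \frac{r^3}{r-2M}\,dr = \frac{r}{3} + M + \frac{4M^2}{r} + \frac{8M^3\ln(r-2M)}{r^2},
\end{equation}
which grows linearly as $r\to\infty$. The general solution being $c = A\,c_1 + B\,c_2$, the hypothesized decay of $\beta$ at infinity forces $B=0$; note that because $c_2$ grows like $r$, even a very mild asymptotic-flatness condition suffices to exclude it, which is why the hypothesis is weak. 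Thus $c(r) = A\,r^{-2}$.

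Finally, with $c(r) = A\,r^{-2}$ I would choose the linearized Kerr parameter $a^{(1)} = A/(6M)$, so that the Kerr contribution $\frac{-6Ma^{(1)}}{r^2}Y_{110} = -A\,r^{-2}Y_{110}$ cancels $\beta_1$ upon addition, while all higher modes and the components $\alpha,\gamma$ are left unchanged, yielding the normalization $\beta = \beta_{\ell>1}$. The main obstacle is the ODE analysis of the third paragraph: one must correctly reduce the bundle Laplacian on the lowest eigenspace to the radial operator and then verify that the second fundamental solution fails to decay, so that the asymptotic hypothesis genuinely pins $\beta_1$ to the Kerr profile. Everything else is bookkeeping, since the linearized Kerr solution is supported entirely in the lowest mode.
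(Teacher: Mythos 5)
Your proposal is correct and follows essentially the same route as the paper: invoke Lemma \ref{beta1t} to drop the time derivatives, reduce \eqref{beta1} on the lowest mode to the radial ODE $\Delta c'' + \Delta_r c' - (2 - \tfrac{8M}{r})c = 0$, discard the non-decaying second solution (your $c_2$ agrees exactly with the paper's $\tfrac{12M^2r + 3Mr^2 + r^3 + 24M^3\log(r-2M)}{3r^2}$) using asymptotic flatness, and cancel the surviving $r^{-2}$ profile against the linearized Kerr datum \eqref{KerrData}. The extra details you supply (the explicit reduction of order via Abel's formula, and the observation that linearized Kerr is supported entirely in the lowest mode so the normalization leaves $\beta_{\ell>1}$, $\alpha$, $\gamma$ untouched) are consistent with, and slightly more explicit than, the paper's presentation.
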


\begin{proof}
From \eqref{beta1}, $\beta_1$ satisfies

\begin{equation}
\frac{-r^2}{\Delta}\beta_{1,tt} + \frac{\Delta}{r^2}\beta_{1,rr} + \frac{2r-2M}{r^2}\beta_{1,r} = \frac{2}{r^2}(1-\frac{4M}{r}) \beta_1,
\end{equation}

\noindent
or, in light of the previous lemma,

\begin{equation}
\frac{\Delta}{r^2}\beta_{1,rr} + \frac{2r-2M}{r^2}\beta_{1,r} = \frac{2}{r^2}(1-\frac{4M}{r}) \beta_1.
\end{equation}

The general solution to the radial ODE above is given by 

\begin{equation}
\beta_1 = \left[\frac{C_{1}}{r^2} + C_{2}\frac{(12M^2r + 3Mr^2 + r^3 + 24M^3\log(r-2M))}{3r^2}\right] \sin^2\theta \Psi_{-1}.
\end{equation}

However, by requiring that our linearized solutions to \eqref{eq: vacuumEinstein} obey an asymptotic flatness condition, we can rule out the second term, so that

\begin{equation}
\beta_1 = \frac{C_{1}}{r^2}\sin^2\theta \Psi_{-1}.
\end{equation}

As the linearized Kerr solution has $\beta = \frac{-6Ma^{(1)}}{r^2}\sin^2\theta\Psi_{-1}$ \eqref{KerrData}, we can always normalize by the addition of a suitable such linearized Kerr to ensure $\beta_{1} = 0$ throughout the exterior. 

\end{proof}

\section{Analysis of $\beta_{\ell > 1}$}

In light of the previous analysis, we can always account for the lowest mode of $\beta$ by addition of a suitable linearized Kerr.  We suppose $\beta$ that is supported away from this first eigenvalue, i.e. $\beta = \beta_{\ell > 1}$.

Away from the lowest mode, the analysis is entirely analogous to that conducted for $\alpha$.  The key observation is that, with the spectrum having the form described above, $\beta$ satisfies the Poincar\'{e} inequality

\begin{equation}\label{PoincareBeta}
\int_{S^2(r)} |\tilde{\slashed{\nabla}}\beta|^2 \geq \frac{5}{r^2} \int_{S^2(r)} |\beta|^2.
\end{equation}

This estimate allows for similar multipliers and estimates to those that applied to $\alpha$.  We recount these within the stress-energy formalism below.

\subsection{Stress-Energy Formalism}

Associated with our wave equation is the stress-energy tensor

\begin{equation}
T_{ab}[\beta]:= \frac{1}{2}(\slashed{\nabla}{a}\beta \slashed{\nabla}_{b}\bar{\beta} + \slashed{\nabla}_{a} \bar{\beta} \slashed{\nabla}_{b} \beta) - \frac{1}{2}g_{ab}(\slashed{\nabla}^{c}\beta\slashed{\nabla}_{c}\bar{\beta} + W|\beta|^2).
\end{equation}

The presence of a potential and our bundle setting introduce a nontrivial divergence:

\begin{equation}
\nabla^{a}T_{ab} = \frac{-1}{2}\nabla_{b}W|\beta|^2 + \frac{1}{2}\slashed{\nabla}^{a}\beta[\slashed{\nabla}_{a},\slashed{\nabla}_{b}]\bar{\beta} + \frac{1}{2}\slashed{\nabla}^{a}\bar{\beta}[\slashed{\nabla}_{a},\slashed{\nabla}_{b}]\beta.
\end{equation}

\noindent
Again, we remark that the commutator $[\slashed{\nabla}_{a},\slashed{\nabla}_{b}]$ vanishes upon application of the multipliers $T^{b}, N^{b}, X^{b}, Z^{b}$ under consideration.

\subsection{Degenerate Energy}

As in the analysis of $\alpha$, the Killing field $T$ has trivial density $K^{T}$, and we obtain a conservation law

\begin{equation}
E^{T}_{\beta}(\tau) = E^{T}_{\beta}(\tau'),
\end{equation}

\noindent
where the $T$-energy is defined just as before:

\begin{equation}
E^{T}_{\beta}(\tau) := \int_{\{ t = \tau \}} J^{T}_{a}\eta^{a}.
\end{equation}

 Appearing in the energy $E^{T}(\tau)$ is a potential term $W|P|^2$, seemingly quite bad, but it is more than accounted for by our Poincar\'{e} inequality (\ref{PoincareBeta}).  In particular, the conserved $T$-energy proves to be non-negative, but degenerate, just as before \eqref{eq: ConservationLaw}.

\subsection{Non-degenerate Energy and Uniform Boundedness}

The red-shift argument carries over with only small modifications.  In particular, the red-shift multiplier $N$, used earlier for $\alpha$, is seen to satisfy in our case the pointwise estimate

$$ K^{N} \geq { c J^{N}_{a}N^{a}},$$

\noindent
for some small $c>0$, near the event horizon.

The argument is much the same as with $\alpha$.  First, we note that the pointwise estimate

$$ T_{ab}\nabla^{a}N^{b} \geq{ c J^{N}_{a}N^{a}},$$

\noindent
continues to hold, being simply a consequence of non-trivial surface gravity $\kappa$ and tensorial algebra on $T_{ab}$.  In addition, the divergence term

$$\nabla^{a}T_{ab}N^{b} = \frac{-1}{2}N(W)|\beta|^2 \geq 0,$$

as with earlier study of $\alpha$.  Taken together, the two yield the desired estimate

$$ K^{N} \geq {T_{ab}\nabla^{a}N^{b}} \geq{c J^{N}_{a}N^{a}}.$$

Extending to the exterior, we construct a strictly timelike multiplier $N$, satisfying the estimates
 
\begin{align*}
K^{N} &\geq{c J^{N}_{a}N^{a}},\ &&\textup{for}\ 2M\leq r \leq r_0,\\
J^{N}_{a}N^{a} &\sim J^{T}_{a}T^{a}, \ &&\textup{for}\ r_0 \leq r \leq R_0,\\
|K^{N}| &\leq  C|J^{T}_{a}T^{a}|, \ &&\textup{for}\ r_0 \leq r \leq R_0,\\
N &= T, \ &&\textup{for}\ r\geq R_0,
\end{align*}
 
\noindent
for some fixed $2M < r_0 < R_0 < \infty$.

Defining the $N$-energy just as before,

\begin{equation}
E^{N}_{\beta}(\Sigma_{\tau}) := \int_{\{t_{*} = \tau\}} J^{N}_{a}\eta^{a},
\end{equation}

\noindent
we note that, although the flux $J^{N}_{a}\eta^{a}$ cannot be shown to be non-negative pointwise, application of our Poincar\'{e} inequality (\ref{PoincareBeta}) gives a positive-definite energy $E^{N}_{\beta}(\Sigma_{\tau})$.  With the above control on the density term, the previous red-shift argument goes through unchanged, and we have uniform boundedness of energy analogous to Theorem \ref{EnergyBoundA}.

\begin{theorem}\label{EnergyBoundB}

 Suppose $\beta$ is a solution of (\ref{eq: RW2}), smooth and compactly supported on the space-like hypersurface $\{ t_{*} = 0\}$.  Further, assume that $\beta$ is supported away from the first eigenspace; i.e., $\beta = \beta_1$.  For $\tau \geq 0$, $\beta$ satisfies the uniform energy estimate
 
 \begin{equation}
 E^{N}_{\beta}(\Sigma_{\tau}) \leq CE^{N}_{\beta}(\Sigma_0).
\end{equation}

\end{theorem}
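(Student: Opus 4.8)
The plan is to transplant the red-shift argument of Theorem \ref{EnergyBoundA} essentially verbatim; the sole genuinely new point is that the potential $W = \frac{1}{r^2}\left(1 - \frac{8M}{r}\right)$ is negative on $2M \leq r < 8M$, so positivity of the energies is no longer automatic and must be recovered from the Poincar\'e inequality (\ref{PoincareBeta}) available to $\beta = \beta_{\ell > 1}$. Accordingly, the first and decisive step is to establish coercivity of both the degenerate $T$-energy and the non-degenerate $N$-energy.

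First I would observe, exactly as in the passage preceding \eqref{eq: ConservationLaw}, that $T = \partial_t$ is Killing and $W$ is purely radial, so $K^T$ vanishes and $E^T_\beta(\tau) = E^T_\beta(\tau')$ is conserved. To see that this conserved quantity is nonetheless non-negative, I examine its flux integrand through $\{ t = \tau \}$: all contributions are manifest squares except for the combination $|\tilde{\slashed\nabla}\beta|^2 + W|\beta|^2$. Integrating this over a sphere of symmetry and invoking (\ref{PoincareBeta}),
\begin{equation*}
\int_{S^2(r)} |\tilde{\slashed\nabla}\beta|^2 + W|\beta|^2 \geq \int_{S^2(r)} \frac{1}{r^2}\left(6 - \frac{8M}{r}\right)|\beta|^2 \geq \frac{2}{r^2}\int_{S^2(r)} |\beta|^2,
\end{equation*}
where the final inequality uses $6 - \frac{8M}{r} \geq 2$ throughout the exterior $r \geq 2M$. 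The same borrowing shows that the flux $J^N_a \eta^a$, though not pointwise non-negative, integrates to a positive-definite $E^N_\beta(\Sigma_\tau)$, as already noted above.

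With coercivity in hand, the remaining steps are identical to the proof of Theorem \ref{EnergyBoundA}. I would invoke the red-shift estimates for $N$ recorded just above --- in particular $K^N \geq c\,J^N_a N^a$ for $2M \leq r \leq r_0$, which holds because $T_{ab}\nabla^a N^b \geq c\,J^N_a N^a$ by the surface-gravity computation and because $\nabla^a T_{ab} N^b = -\tfrac{1}{2} N(W)|\beta|^2 \geq 0$, $W$ being radially increasing near $r = 2M$. Applying the divergence theorem on the region $\mathcal{R}(\tau', \tau)$ bounded by $\Sigma_{\tau'}$ and $\Sigma_\tau$, discarding the favorably-signed horizon flux, and combining the four $N$-estimates with conservation of the now-coercive $T$-energy, I obtain the integral inequality
\begin{equation*}
E^N_\beta(\Sigma_\tau) + c\int_{\tau'}^{\tau} E^N_\beta(\Sigma_s)\,ds \leq C(\tau - \tau') + E^N_\beta(\Sigma_{\tau'}),
\end{equation*}
from which the uniform bound $E^N_\beta(\Sigma_\tau) \leq C\,E^N_\beta(\Sigma_0)$ follows precisely as before.

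The main obstacle is the coercivity step, and it is conceptual rather than computational: one must be sure that the spectral gap $\frac{5}{r^2}$ enjoyed by $\beta_{\ell>1}$ strictly dominates the deficit $\frac{8M}{r^3}$ contributed by the negative part of $W$, uniformly down to and including the horizon. The elementary estimate $6 - \frac{8M}{r} \geq 2$ on $r \geq 2M$ settles this with room to spare, after which every energy in the red-shift hierarchy inherits positivity and the argument transfers without change. I emphasize that the restriction to $\beta_{\ell>1}$ is essential precisely here: the lowest mode $\beta_1$ enjoys only a strictly smaller Poincar\'e gap, insufficient to dominate $W$ near the event horizon, which is why $\beta_1$ is removed by the linearized-Kerr normalization of the preceding section rather than handled by these estimates.
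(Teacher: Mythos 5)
Your proposal is correct and follows essentially the same route as the paper: conservation and Poincar\'e-induced coercivity of the $T$- and $N$-energies for $\beta=\beta_{\ell>1}$, the unchanged red-shift density estimate near the horizon (using that $W$ is radially increasing there), and the divergence-theorem/integral-inequality argument transplanted from Theorem \ref{EnergyBoundA}. The only difference is that you make explicit the coercivity computation $5 + 1 - \tfrac{8M}{r} \geq 2$ on $r \geq 2M$, which the paper merely asserts; this is a welcome addition rather than a deviation.
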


Likewise, commutation by $T$ and by the $\Omega_{i}$, along with the Sobolev embeddings, carry over to give uniform boundedness of $\beta$ in terms of initial energy, as in Theorem \ref{PointwiseBoundA}.

\begin{theorem}\label{PointwiseBoundB}
For $\beta$ satisfying the hypotheses of Theorem \ref{EnergyBoundB} above, we have the pointwise bound

\begin{equation}
||\beta||_{C^{0}(\mathcal{D})}  \leq {C\left(E^{N}_{\beta}(\Sigma_0) + \sum_{i =1}^{3}E^{N}_{\Omega_{i}\beta}(\Sigma_0) + \sum_{i,j = 1}^{3} E^{N}_{\Omega_{i}\Omega_{j}\beta}(\Sigma_0)\right)},
\end{equation}

\noindent
with $\mathcal{D} = J^{+}(\Sigma_0)\cap{J^{-}(\mathcal{I}^{+})}$ being the exterior spacetime region foliated by the non-negative $t_{*}$-slices.
\end{theorem}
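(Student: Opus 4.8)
The plan is to follow the proof of Theorem \ref{PointwiseBoundA} almost verbatim, the one substantive difference being that for $\beta$ the red-shift flux $J^{N}_{a}\eta^{a}$ is no longer pointwise nonnegative; its coercivity over the gradient must instead be recovered after integration over each symmetry sphere by means of the Poincar\'{e} inequality \eqref{PoincareBeta}.

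First I would propagate the uniform energy bound of Theorem \ref{EnergyBoundB} to the commuted quantities $\Omega_{i}\beta$ and $\Omega_{i}\Omega_{j}\beta$. Since $T$ and the $\Omega_{i}$ are Killing, commute with $\slashed{\Box}_{\mathcal{L}(-1)}$, and the potential $W=\frac{1}{r^2}(1-\frac{8M}{r})$ is purely radial, each of these remains a solution of \eqref{eq: RW2}. Moreover, as the rotation fields $\Omega_{i}$ commute with the spherical Laplacian $\slashed{\Delta}_{\mathcal{E}(-2)}$, they preserve its eigenspace decomposition, so $\Omega_{i}\beta$ and $\Omega_{i}\Omega_{j}\beta$ are again supported away from the lowest mode and hence inherit \eqref{PoincareBeta}. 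The hypotheses of Theorem \ref{EnergyBoundB} are therefore met by each commuted quantity, yielding $E^{N}_{\Omega_{i}\beta}(\Sigma_{\tau}) \leq C E^{N}_{\Omega_{i}\beta}(\Sigma_{0})$ and $E^{N}_{\Omega_{i}\Omega_{j}\beta}(\Sigma_{\tau}) \leq C E^{N}_{\Omega_{i}\Omega_{j}\beta}(\Sigma_{0})$.

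Next I would run the Sobolev argument exactly as for $\alpha$. Letting $\rho$ denote the geodesic radial coordinate on $\Sigma_{\tau}$ with $\rho = 1$ on the horizon, the fundamental theorem of calculus in $\rho$, followed by Cauchy--Schwarz with the weight $(\rho')^2$, and then the Sobolev inequality on $S^2$ (which introduces the angular derivatives $\Omega_{i}$ and $\Omega_{i}\Omega_{j}$), produces the pointwise estimate
\begin{equation*}
|\beta(\rho,\omega)|^2 \leq C\int_{\Sigma_{\tau}}\left[|\nabla_{\Sigma_{\tau}}\beta|^2 + \sum_{i=1}^{3}|\nabla_{\Sigma_{\tau}}\Omega_{i}\beta|^2 + \sum_{i,j=1}^{3}|\nabla_{\Sigma_{\tau}}\Omega_{i}\Omega_{j}\beta|^2\right].
\end{equation*}

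The hard part will be the final conversion of this gradient integral into the red-shift energies. Because the flux $J^{N}_{a}\eta^{a}$ contains a potential contribution proportional to $W|\beta|^2$, and $W<0$ on $2M\leq r <8M$, the integrand is not pointwise coercive over $|\nabla_{\Sigma_{\tau}}\beta|^2$. The resolution is the very mechanism that already renders $E^{N}_{\beta}$ positive-definite: on each symmetry sphere I would retain a fixed fraction of the angular energy and absorb the negative potential through \eqref{PoincareBeta}. Since $|W|r^2 = |1-\frac{8M}{r}| \leq 3$ on $r \geq 2M$, while \eqref{PoincareBeta} supplies the lower bound $\frac{5}{r^2}\int_{S^2}|\beta|^2$ for $\int_{S^2}|\tilde{\slashed{\nabla}}\beta|^2$, keeping four fifths of the angular term leaves $\int_{S^2}(\frac{1}{5}|\tilde{\slashed{\nabla}}\beta|^2 + \frac{c}{r^2}|\beta|^2)$ with the coefficient $5-\frac{8M}{r}\geq 1$ strictly positive, so that $\int_{\Sigma_{\tau}}J^{N}_{a}\eta^{a} \geq c\int_{\Sigma_{\tau}}|\nabla_{\Sigma_{\tau}}\beta|^2$, and likewise for each commuted quantity. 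Combining this coercivity with the propagated energy bounds of the first step and evaluating at $\Sigma_{0}$ delivers the asserted pointwise bound over $\mathcal{D}$.
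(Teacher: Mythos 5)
Your proposal is correct and follows essentially the same route as the paper, which simply states that the commutation and Sobolev argument of Theorem \ref{PointwiseBoundA} carries over to $\beta$. Your added care in recovering coercivity of $\int_{\Sigma_\tau} J^{N}_{a}\eta^{a}$ over $|\nabla_{\Sigma_\tau}\beta|^2$ via the Poincar\'e inequality \eqref{PoincareBeta} (absorbing the negative potential $W$ sphere by sphere), and your observation that the $\Omega_i$ preserve the $\ell>1$ support so the commuted quantities inherit \eqref{PoincareBeta}, are exactly the points the paper handles implicitly in its remark that $E^{N}_{\beta}$ is positive-definite.
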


The above theorem uses again the shorthand

\begin{align}
T\beta &:= \mathcal{L}_{T}\beta,\\
\Omega_{i}\beta &:= \mathcal{L}_{\Omega_{i}}\beta,\\
\Omega_{i}\Omega_{j}\beta &:= \mathcal{L}_{\Omega_{i}}\mathcal{L}_{\Omega_{j}}\beta.
\end{align}

\subsection{The Morawetz Multiplier $X$}

As it turns out, the very same Morawetz multiplier which applied to $\alpha$ works just as well for $\beta$.  We remind the reader that, letting $X = f(r)\partial_{r_{*}},$ with $f$ a general radial function, and with weight function $\omega^{X} = f' + 2f\frac{1-\mu}{r}$, we have the weighted energy current

\begin{equation}
 J^{X,\omega^{X}}_{a} := J^{X}_{a} + \frac{1}{4}\omega^{X}\nabla_{a}|\beta|^2 -\frac{1}{4}\nabla_{a}\omega^{X}|\beta|^2, 
 \end{equation}

\noindent
with weighted density

\begin{equation}
K^{X,\omega^{X}} := K^{X} + \frac{1}{4}\omega^{X}\Box|\beta|^2 -\frac{1}{4} \Box \omega^{X} |\beta|^2.
\end{equation}

The weighted density is calculated to be

\begin{multline}
K^{X,\omega^{X}} =  \frac{f'}{1-\mu}|\slashed{\nabla}_{r_{*}}\beta|^2 + \frac{f}{r}(1-\frac{3M}{r})|\tilde{\slashed{\nabla}}\beta|^2\\
+ \left[\frac{-Mf}{r^2}W -\frac{1}{2}fW' -\frac{1}{4}\Box \omega^{X} \right]|\beta|^2.
\end{multline}

With the choice $f := (1-\frac{3M}{r})(1+\frac{M}{r})^2$, as before, we find positive radial and angular terms, and a badly behaved base term, negative in a spatially compact region away from the photon sphere.  Integrated over a spherically symmetric spacetime region, say $\{ \tau' \leq t \leq \tau \}$, we can apply our Poincar\'{e} inequality (\ref{PoincareBeta}), borrow from the good angular term, and find just as before

\begin{multline}
\int_{\{\tau' \leq{t}\leq{\tau}\}} K^{X,\omega^{X}}\\
\geq{c\int_{\tau'}^{\tau}\int_{2M}^{\infty}\int_{S^2}\left[\frac{1}{r^2}|\slashed{\nabla}_{r*}\beta|^2 + \frac{1}{r^3}|\beta|^2 + \frac{(r-3M)^2}{r^3}|\tilde{\slashed{\nabla}}\beta|^2\right]r^2d\sigma dr dt.}
\end{multline}
 
Likewise, estimating the boundary terms by the conserved $T$-energy $E^{T}_{\beta}(\tau)$ is straightforward, allowing for an estimate on the degenerate spacetime integral above (regardless of choice of positive $\tau'$ and $\tau$) in terms of initial data.  Summarizing, we have the analog of Lemma \ref{DegIntegratedDecayA}:
 
\begin{lemma}\label{DegIntegratedDecayB}
Suppose $\beta$ is a solution of (\ref{eq: RW2}), supported away from the first eigenspace; i.e., $\beta = \beta_1$.  Further, assume $\beta$ is smooth and compactly supported on the time slice $\{ t = 0 \}$.  Then for any $0 \leq{\tau'}\leq\tau$, $\beta$ satisfies the degenerate integrated decay estimate
  
\begin{multline}
\int_{\tau'}^{\tau}\int_{2M}^{\infty}\int_{S^2}\left[\frac{1}{r^2}|\slashed{\nabla}_{r*}\beta|^2 + \frac{1}{r^3}|\beta|^2 + \frac{(r-3M)^2}{r^3}|\tilde{\slashed{\nabla}}\beta|^2\right]r^2d\sigma dr dt\\
\leq{C\int_{\{\tau' \leq t \leq \tau\}} K^{X,\omega^{X}}} \leq {C\int_{\{ t = \tau'\}} J^{T}_{a}\eta^{a}} = CE^{T}_{\beta}(\tau'),
\end{multline}
 
\noindent
with the spacetime term degenerating at the event horizon $r = 2M$, at the photon sphere $r = 3M$, and at infinity.
 
\end{lemma}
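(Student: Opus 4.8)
The plan is to transcribe the proof of Lemma~\ref{DegIntegratedDecayA} for $\alpha$, since the entire multiplier apparatus has already been assembled in the discussion preceding the statement. The weighted density $K^{X,\omega^X}$ associated to $X = f(r)\partial_{r_*}$ with $f = (1-\frac{3M}{r})(1+\frac{M}{r})^2$ and $\omega^X = f' + 2f\frac{1-\mu}{r}$ has been computed, with manifestly positive coefficients multiplying $|\slashed\nabla_{r_*}\beta|^2$ and $|\tilde{\slashed\nabla}\beta|^2$ and a base coefficient on $|\beta|^2$ that is negative only on a spatially compact region away from the photon sphere. The only genuinely new work is the boundary estimate and its assembly, via the divergence theorem, with the bulk lower bound; this I would carry out as follows.

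First I would record the integrated bulk lower bound already displayed before the lemma, namely
\[
\int_{\{\tau'\le t\le\tau\}} K^{X,\omega^X} \geq c\int_{\tau'}^{\tau}\!\int_{2M}^{\infty}\!\int_{S^2}\left[\frac{1}{r^2}|\slashed\nabla_{r_*}\beta|^2 + \frac{1}{r^3}|\beta|^2 + \frac{(r-3M)^2}{r^3}|\tilde{\slashed\nabla}\beta|^2\right]r^2\,d\sigma\,dr\,dt,
\]
obtained by borrowing from the positive angular term through the Poincar\'e inequality (\ref{PoincareBeta}). I would emphasize that the relevant Poincar\'e constant is $5$, rather than the $2$ available for $\alpha$, reflecting that $\beta = \beta_{\ell>1}$ begins at the $\ell = 2$ eigenvalue; this leaves strictly more room to absorb the bad base term than in the $\alpha$ case, so positivity of the net $|\beta|^2$ coefficient is assured.

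Next I would bound the weighted boundary flux $\int_{\{t=\tau\}} J^{X,\omega^X}_a\eta^a$ term by term, exactly as for $\alpha$. With $\eta^a = (1-\mu)^{-1/2}\partial_t$ and the Regge--Wheeler metric diagonal, the leading current contributes $J^X_a\eta^a = f(1-\mu)^{1/2}\slashed\nabla_t\beta\cdot\slashed\nabla_r\beta$, which is bounded by $E^T_\beta(\tau)$ using $|f|\le B$ and Young's inequality; the weight piece $\frac14\omega^X\nabla_a|\beta|^2\eta^a$ produces a term $\sim\slashed\nabla_t\beta\cdot\beta$ with a uniformly bounded radial coefficient, again controlled by $E^T_\beta(\tau)$ after Young's inequality; and the final piece $\frac14|\beta|^2\nabla_a\omega^X\eta^a$ vanishes because $\omega^X$ is purely radial while $\eta^a\propto\partial_t$. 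Invoking the conservation law $E^T_\beta(\tau) = E^T_\beta(\tau')$ from the degenerate-energy subsection, and applying the divergence theorem over $\{\tau'\le t\le\tau\}$ to write $\int_{\{\tau'\le t\le\tau\}} K^{X,\omega^X}$ as the difference of the fluxes through $\{t=\tau'\}$ and $\{t=\tau\}$, I would combine these bounds with the lower bound of the first step to obtain the asserted estimate.

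The step demanding real care is the boundary estimate, specifically the control of the $|\beta|^2$ contributions by $E^T_\beta$. For $\alpha$ the potential $V$ is strictly positive and the $T$-energy is manifestly positive-definite, so $\int_{\{t=\tau\}}(1-\mu)|\alpha|^2 \leq CE^T_\alpha(\tau)$ is immediate; here the potential $W = \frac{1}{r^2}(1-\frac{8M}{r})$ is negative near the horizon, and $E^T_\beta$ is positive-definite only after the Poincar\'e inequality is applied. Thus the analogous bound $\int_{\{t=\tau\}}(1-\mu)|\beta|^2 \leq CE^T_\beta(\tau)$ must again be routed through (\ref{PoincareBeta}): the angular-plus-potential part of the $T$-energy density dominates $\int \frac{1}{r^2}(6 - \frac{8M}{r})|\beta|^2$, whose coefficient is positive for all $r > 2M$. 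Once this is in hand, the argument is a direct transcription of the $\alpha$ analysis.
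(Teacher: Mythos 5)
Your proposal is correct and follows essentially the same route as the paper, which likewise treats this lemma as a direct transcription of the $\alpha$ argument: the same multiplier $X$ and weight $\omega^X$, the Poincar\'{e} inequality (\ref{PoincareBeta}) with constant $5$ to absorb the bad base term, and boundary fluxes controlled by the conserved $T$-energy. Your added care regarding the sign of $W$ near the horizon and the need to route the positivity of $E^T_\beta$ through the Poincar\'{e} inequality matches the remark the paper makes in its degenerate-energy subsection for $\beta$.
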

 
 \subsection{The $Z$ Multiplier}
 
Next we outline the use of the $Z$ multiplier in our setting, concluding with a statement of uniform decay.  Recall that $Z$ is defined as
 
 \begin{equation}
  Z := u^2 \partial_{u} + v^2 \partial_{v} = \frac{1}{2}(t^2 + r_{*}^2)\partial_{t} + tr_{*}\partial_{r_{*}}.
  \end{equation}
 
 Again, we define a weighted energy current
 
 \begin{equation}
J^{Z,\omega^Z}_{a} := J^Z_{a}+\frac{1}{4}\omega^Z\nabla_{a} |\beta|^2-\frac{1}{4}|\beta|^2\nabla_{a}\omega^Z,
\end{equation}

\noindent
with weighted density

\begin{align}
\begin{split}
K^{Z,\omega^Z}&= t|\tilde{\slashed \nabla} \beta|^2\left(-1-\frac{\mu r_{*}}{2r}+\frac{r_{*}(1-\mu)}{r}\right)-\frac{1}{4}\Box\omega^Z |\beta|^2\\
&+\left(\frac{tr_{*}}{r}(1-\mu)W-\frac{1}{2}W\nabla_{b} Z^{b}-\frac{1}{2}Z(W) \right)|\beta|^2,\\
\end{split}
\end{align}

\noindent
where $\omega^{Z} = \frac{2tr^*}{r}(1-\mu)$.

From the scalar case, it is well-known that the first two coefficients in the weighted density are positive near the event horizon and near infinity.  However, the remaining coefficient only has positivity near infinity, and indeed approaches negative infinity at a logarithmic rate near the event horizon.  Integrating over a spherically symmetric spacetime region and borrowing from both of the other two terms (applying our Poincar\'{e} inequality (\ref{PoincareBeta}) yet again), we have positivity near both the event horizon and near infinity.  That is, we can fix $2M < r' < R' < \infty$ such that

\begin{align}
\begin{split}
\int_{S^2} K^{Z,\omega^Z}&\geq 0\ \textup{as}\ r\leq r'\ \textup{or}\ r\geq R',\\
|K^{Z,\omega^Z}|&\leq Ct\left(|\beta|^2+|\tilde{\slashed\nabla} \beta|^2\right)\ \textup{as}\ r'\leq r\leq R'.
\end{split}
\end{align}

With this estimate in place, we next control the unweighted energy flux by the weighted flux; defining

\begin{align}
E^{Z}_{\beta}(\tau) &:= \int_{\{t = \tau \}} J^{Z}_{a}\eta^{a},\\
E^{Z,\omega^{Z}}_{\beta}(\tau) &:= \int_{\{t = \tau\}} J^{Z,\omega^{Z}}_{a}\eta^{a},
\end{align}

\noindent
we show the analog of \eqref{eq: Zcomparison}:

\begin{equation}\label{eq: Zcomparison2}
E^{Z,\omega^{Z}}_{\beta}(\tau) \geq c E^{Z}_{\beta}(\tau).
\end{equation}

The approach is exactly the same as before: rewrite the $Z$-energies in terms of the scaling operator $S$ and $\underline{S}$ (\ref{Soperators}).  Applying integration by parts to the weight term

$$ \frac{1}{4}\omega^Z\nabla_{a} |\beta|^2-\frac{1}{4}|\beta|^2\nabla_{a}\omega^Z,$$

\noindent
we can factor in terms of these operators and apply the Poincar\'{e} inequality (\ref{PoincareBeta}) to obtain the desired estimate.

The remainder of the proof follows identically that applied to $\alpha$.  Namely, we prove a local decay estimate analogous to Lemma \ref{localized}, and use bootstrapping to prove boundedness of $E_{\beta}^{Z,\omega^{Z}}(\tau)$, hence $E_{\beta}^{Z}(\tau)$, as in Lemma \ref{weightedZbound}.  This uniform bound on the unweighted $Z$-energy yields statements about quadratic decay of the degenerate $E_{\beta}^{T}(\tilde{\Sigma}_{\tau})$ and the non-degenerate $E_{\beta}^{N}(\tilde{\Sigma}_{\tau})$, as in Theorem \ref{NondegIntegratedDecayA}.  Finally, application of appropriate Sobolev embeddings and commutation with the usual Killing fields gives a statement of pointwise decay, analogous to Theorem \ref{pointwisedecayA}.

\begin{theorem}\label{NondegIntegratedDecayB}
Suppose $\beta$ is a solution of (\ref{eq: RW2}), supported away from the first eigenspace; i.e., $\beta = \beta_1$.  Further, assume $\beta$ is smooth and compactly supported on the time slice $\{ t = 0 \}$.  Defining the initial energy

\begin{equation}
E_1[\beta] := \sum_{(m)\leq 3} \int_{\{t = 0 \}} r^2J^{N}_{a}[\Omega^{(m)}\beta]\eta^{a},
\end{equation}

\noindent
the solution $\beta$ satisfies the integrated decay estimate

\begin{equation}
E^{N}_{\beta}(\tilde{\Sigma}_{\tau})\leq CE_1[\beta]\tau^{-2},
\end{equation}

\noindent
uniformly on the hypersurfaces $\tilde{\Sigma}_{\tau}$ (\ref{decayFoliation}) defined above.
\end{theorem}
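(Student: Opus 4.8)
The plan is to transcribe the proof of Theorem~\ref{NondegIntegratedDecayA} line for line, with $\alpha$ replaced by $\beta = \beta_{\ell > 1}$ and the potential $V$ replaced by $W$. Every ingredient needed has already been put in place: the preceding subsection asserts the $\beta$-analogues of the weighted $Z$-energy bound (Lemma~\ref{weightedZbound}) and of the comparison \eqref{eq: Zcomparison2}, and Lemma~\ref{DegIntegratedDecayB} supplies the degenerate integrated decay. I would run the argument in two stages: first extract quadratic decay of the degenerate $T$-energy through the foliation $\tilde{\Sigma}_\tau$, then upgrade to the non-degenerate $N$-energy by a red-shift bootstrap near the horizon.

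For the first stage, I would use that $\tilde{\Sigma}_\tau$ is constructed so that $u, v \geq \tau$ on it. Hence on $\{t = s\} \cap J^+(\tilde{\Sigma}_\tau)$ the weights $u^2 + v^2$ entering $E^Z_\beta$ are at least $\tau^2$, and for any $s > 0$
$$ C E_1[\beta] \geq \int_{\{t=s\}\cap J^+(\tilde{\Sigma}_\tau)} J^{Z,\omega^Z}_a \eta^a_s \geq c\tau^2 \int_{\{t=s\}\cap J^+(\tilde{\Sigma}_\tau)} J^T_a \eta^a_s, $$
invoking uniform boundedness of the weighted $Z$-energy and the comparison \eqref{eq: Zcomparison2}. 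Since $K^T = 0$, conservation of the $T$-flux lets me rewrite the right-hand integral over $\tilde{\Sigma}_\tau \cap J^-(\{t = s\})$; sending $s \to \infty$ then gives the degenerate decay $\int_{\tilde{\Sigma}_\tau} J^T_a \eta^a_{\tilde{\Sigma}_\tau} \leq C E_1[\beta] \tau^{-2}$.

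For the second stage, I would apply the divergence theorem to $J^N_a$ over the region $\mathcal{R}(\tau', \tau)$ bounded by $\tilde{\Sigma}_{\tau'}$ and $\tilde{\Sigma}_\tau$, using the four defining properties of the red-shift multiplier $N$: coercivity $K^N \geq c J^N_a N^a$ near the horizon, equivalence $J^N_a N^a \sim J^T_a T^a$ and the bound $|K^N| \leq C |J^T_a T^a|$ across the transition region, and $N = T$ for large $r$. Writing $g(\tau) := \int_{\tilde{\Sigma}_\tau \cap \{2M \leq r \leq r_0\}} J^N_a \eta^a_{\tilde{\Sigma}_\tau}$ and controlling the far flux $\int_{\tilde{\Sigma}_\tau \cap \{r \geq r_0\}}$ by the degenerate decay of the first stage, I obtain the integral inequality
$$ g(\tau) + c\int_{\tau'}^\tau g(s)\, ds \leq g(\tau') + C E_1[\beta] \max\{\tau - \tau', 1\}(\tau')^{-2}, $$
from which the same bootstrap used for $\alpha$ delivers $g(\tau) \leq C E_1[\beta]\tau^{-2}$, and hence $E^N_\beta(\tilde{\Sigma}_\tau) \leq C E_1[\beta]\tau^{-2}$.

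The part demanding genuine attention, rather than mere copying, is sign control. For $\beta$ the potential $W$ is negative when $r < 8M$, so neither the positive-definiteness of $E^N_\beta$ nor the favorable sign of $K^{Z,\omega^Z}$ near the horizon is automatic. The essential device, already exploited throughout the $\beta$-subsections and carried into this proof, is the improved Poincar\'{e} inequality \eqref{PoincareBeta}, which holds exactly because $\beta$ is supported away from the lowest eigenspace; borrowing from the angular-gradient term via \eqref{PoincareBeta} is what restores the coercivity on which both stages silently depend. In short, the hypothesis $\beta = \beta_{\ell > 1}$ is precisely the input that makes the transcription of the $\alpha$-argument legitimate.
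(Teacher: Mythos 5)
Your proposal is correct and follows essentially the same route as the paper, which itself states that Theorem~\ref{NondegIntegratedDecayB} is obtained by running the two-stage argument of Theorem~\ref{NondegIntegratedDecayA} verbatim for $\beta$ (degenerate $T$-flux decay from the weighted $Z$-energy bound and the foliation property $u,v\geq\tau$, then the red-shift integral inequality and bootstrap for the non-degenerate flux). Your closing observation---that the Poincar\'e inequality \eqref{PoincareBeta}, available precisely because $\beta=\beta_{\ell>1}$, is what restores positivity despite the sign-indefinite potential $W$---is exactly the point the paper relies on in the surrounding discussion.
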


\begin{theorem}\label{pointwisedecayB}
Suppose $\beta$ is a solution of (\ref{eq: RW2}), satisfying the same assumptions as (\ref{NondegIntegratedDecayB}).  Defining the energy 

\begin{equation}
E_2[\beta] := \sum_{(m)\leq6} \int_{\{t = 0 \}} r^2J^{N}_{a}[\Omega^{(m)}\beta]\eta^{a},
\end{equation}

\noindent
the solution $\beta$ satisfies the uniform decay estimate 

\begin{equation}
\sup_{\tilde{\Sigma}_{\tau}} \beta \leq C\sqrt{E_2[\beta]}\tau^{-1}
\end{equation}

\noindent
on the family of hypersurfaces $\tilde{\Sigma}_{\tau}$ (\ref{decayFoliation}) specified above.
\end{theorem}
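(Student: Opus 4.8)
The plan is to reproduce the argument of Theorem \ref{pointwisedecayA} almost verbatim, reducing the desired pointwise bound to the integrated decay of Theorem \ref{NondegIntegratedDecayB} by means of commutation with the rotational Killing fields $\Omega_i$ together with the Sobolev embedding $H^2(S^2)\hookrightarrow C^0(S^2)$. First I would commute the Regge-Wheeler equation (\ref{eq: RW2}) with $\Omega_i$ and $\Omega_i\Omega_j$; since the potential $W$ is radial and the $\Omega_i$ are Killing, the commuted sections $\Omega\beta$ and $\Omega^2\beta$ again solve (\ref{eq: RW2}) and remain supported away from the first eigenspace, so the energy decay $E^N_{\Omega^{(m)}\beta}(\tilde{\Sigma}_\tau)\leq CE_1[\Omega^{(m)}\beta]\tau^{-2}$ of Theorem \ref{NondegIntegratedDecayB} applies to each. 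Summing over $(m)\leq 2$, the right-hand side is controlled by $E_2[\beta]$, since two commutations added to the three derivatives implicit in $E_1$ stay within the six derivatives appearing in $E_2[\beta]$.

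The step requiring care, and the one where $\beta$ genuinely departs from $\alpha$, is the coercivity of the flux $J^N_a\eta^a_{\tilde{\Sigma}_\tau}$. For $\alpha$ this flux was pointwise non-negative and bounded below by $|\nabla_{\tilde{\Sigma}_\tau}\alpha|^2$; here the potential term $W|\beta|^2$ entering $T_{ab}$ is \emph{not} of definite sign, since $W=\frac{1}{r^2}(1-\frac{8M}{r})$ is negative near the horizon. I would therefore recover coercivity only after integrating over each sphere of symmetry, absorbing the indefinite potential term into the angular gradient $|\tilde{\slashed\nabla}\beta|^2$ by the Poincaré inequality (\ref{PoincareBeta}), exactly as in the proof of uniform boundedness for $\beta$. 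This yields $\int_{S^2(r)}J^N_a\eta^a_{\tilde{\Sigma}_\tau}\geq c\int_{S^2(r)}|\nabla_{\tilde{\Sigma}_\tau}\beta|^2$ away from null infinity; repeating the hyperbolic-geometry computation of Theorem \ref{pointwisedecayA} near null infinity, where $\tau\sim t-\sqrt{r^2+1}$ and $N=T\sim t\eta_{\tilde{\Sigma}_\tau}-rR$, gives the integrated lower bound $\int_{S^2(r)}J^N_a\eta^a_{\tilde{\Sigma}_\tau}\geq \frac{c}{r}\int_{S^2(r)}|\nabla_{\tilde{\Sigma}_\tau}\beta|^2$.

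With coercivity in hand, I would introduce the geodesic radial coordinate $\rho$ on $\tilde{\Sigma}_\tau$, normalized to $\rho=1$ on the horizon, and bound $|\beta(\rho,\omega)|$ by the fundamental theorem of calculus and Cauchy-Schwarz, then apply the Sobolev inequality on $S^2$ after commuting with $\Omega_i$ up to second order. Using $r\sim e^\rho$ near null infinity, so that $\rho^2/r^2\leq 1/r$ there, the resulting integral is controlled by $\int_{\tilde{\Sigma}_\tau}\big(J^N_a[\beta]+J^N_a[\Omega\beta]+J^N_a[\Omega^2\beta]\big)\eta^a_{\tilde{\Sigma}_\tau}$, which by the commuted form of Theorem \ref{NondegIntegratedDecayB} is at most $CE_2[\beta]\tau^{-2}$; taking square roots gives $\sup_{\tilde{\Sigma}_\tau}|\beta|\leq C\sqrt{E_2[\beta]}\tau^{-1}$. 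The main obstacle is precisely the indefiniteness of the flux: every pointwise inequality used in the $\alpha$ argument must be replaced by its spherically integrated counterpart, valid only after invoking (\ref{PoincareBeta}). Because the Sobolev step already integrates over $S^2$ and commutes with the $\Omega_i$, this replacement is harmless, but one must verify that the borrowing from the angular term in (\ref{PoincareBeta}) leaves a positive remainder of the angular gradient sufficient to run the Sobolev embedding on the commuted sections $\Omega\beta$ and $\Omega^2\beta$, each of which inherits the same spectral gap.
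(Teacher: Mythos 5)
Your proposal is correct and follows essentially the same route as the paper, which treats Theorem \ref{pointwisedecayB} by declaring the argument of Theorem \ref{pointwisedecayA} carries over after commutation with the $\Omega_i$ and Sobolev embedding. You have in fact supplied more detail than the paper does, and you correctly isolate the one genuine modification: since $W=\frac{1}{r^2}(1-\frac{8M}{r})$ is negative near the horizon, the coercivity $J^N_a\eta^a_{\tilde{\Sigma}_\tau}\gtrsim \frac{1}{r}|\nabla_{\tilde{\Sigma}_\tau}\beta|^2$ holds only after integration over spheres of symmetry via the Poincar\'{e} inequality (\ref{PoincareBeta}), which is exactly the mechanism the paper invokes throughout its $\beta=\beta_{\ell>1}$ analysis.
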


\section{Analysis of $\gamma$ and Proof of the Main Theorem}

Again, recall the governing equations

\begin{align}
R_{t\phi}^{(1)} = 0 &\rightarrow (r^2\beta)_{r} + \frac{r^4}{\Delta}\sqrt{2}r\bar{\partial}\gamma = 0,\\
R_{r\phi}^{(1)} = 0 &\rightarrow \sqrt{2}r\bar{\partial}\alpha + \beta_{t},\\
R_{\theta\phi}^{(1)} = 0 &\rightarrow (r^2\alpha)_{r} -\frac{r^4}{\Delta}\gamma_{t} = 0,\\
d^2\zeta^{(1)} = 0 &\rightarrow \frac{1}{r^2}\sqrt{2}r \partial \beta - \gamma_{r} + \frac{r^2}{\Delta}\alpha_{t} = 0.
\end{align}

Further, we assume that the lowest mode of $\beta$ has been accounted for by the addition of a suitable linearized Kerr, such that $\beta = \beta_1$.

Expressed in terms of the Regge-Wheeler coordinates $t$ and $r_{*}$, we have

\begin{align}
\gamma_{t} &= \alpha_{r_{*}} + \frac{2\Delta}{r^3}\alpha,\\
\gamma_{r_{*}} &= \alpha_{t} + (1-\mu)\frac{1}{r^2}\sqrt{2}r\partial{\beta}.
\end{align}

Let $R$ be the unit radial vector on $\tilde{\Sigma}_{\tau}$, where we recall the specification of $\tilde{\Sigma}_{\tau}$ (\ref{decayFoliation}).  Just as before, an estimate on the radial derivative $|R\gamma|$ is crucial in subsequent applications of Sobolev embedding.  

Near the horizon, 

\begin{align*}
R &\sim \frac{1}{1-\mu}(\frac{-2M}{r}\partial_{t} + \partial_{r_{*}}),\\
\eta_{\tilde{\Sigma}_{\tau}} &\sim \frac{1}{1-\mu}(\partial_{t} -\frac{2M}{r} \partial_{r_{*}}).
\end{align*}

Hence

\begin{align*}
R\gamma &\sim \eta_{\tilde{\Sigma}_{\tau}}\alpha -\frac{4M}{r^2}\alpha + \sqrt{2}\partial \beta\\
|R\gamma|^2 &\leq C\left(|\eta_{\tilde{\Sigma}_{\tau}}\alpha|^2 + |\alpha|^2 + |\tilde{\slashed{\nabla}}\beta|^2\right).
\end{align*}

Near null infinity, we have

\begin{align*}
&R  :=t\partial_r+r\partial_t,\\
&\eta_{\tilde{\Sigma}_{\tau}}\sim r\partial_r+t\partial_t,
\end{align*}

\noindent
so that

$$
|R\gamma|^2 \leq C\left(|\eta_{\tilde{\Sigma}_{\tau}}\alpha|^2 + |\alpha|^2 + |\tilde{\slashed{\nabla}}\beta|^2\right),
$$

\noindent
as well.

Note also that the governing equations above commute with the angular Killing fields $\Omega_{i}$, so that 

$$|R\Omega^{(m)}\gamma|^2 \leq C\left(|\eta_{\tilde{\Sigma}_{\tau}}\Omega^{(m)}\alpha|^2 + |\Omega^{(m)}\alpha|^2 + |\tilde{\slashed{\nabla}}\Omega^{(m)}\beta|^2\right),$$

\noindent
for a multi-index $(m)$.

With $\rho$ a geodesic radial coordinate on $\tilde{\Sigma}_{\tau}$, normalized to $\rho=1$ on the horizon, we compute

\begin{align*}
|\gamma|\leq &\int_1^\infty |R\gamma|d\rho \leq C \left(\int_1^\infty |R\gamma|^2\rho^2 d\rho\right)^{1/2}\\
     \leq &C\left( \int_1^\infty d\rho\int_{S^2}d\sigma \rho^2\left( |R\gamma|^2+|R\Omega \gamma|^2+|R\Omega^2\gamma|^2 \right) \right)^{1/2}\\
     \leq &C\left(\int_{\tilde{\Sigma}_{\tau}} \frac{\rho^2}{r^2}(|\eta_{\tilde{\Sigma}_{\tau}}\alpha|^2 + |\alpha|^2 + |\tilde{\slashed{\nabla}}P|^2 + \hdots)\right)^{1/2}\\
     \leq &C\sqrt{E_2}\tau^{-1}.
\end{align*}

With this estimate, as well as those on $\alpha$ and $\beta$, we have our main theorem:

\begin{theorem}
Suppose $(\alpha, \beta, \gamma)$ is an axial solution of the linearized Einstein vacuum equations about the Schwarzschild spacetime.  Assume, moreover, that each of the three quantities is smooth and compactly supported on $\{ t = 0 \}$.  Then the following hold:

\begin{enumerate}

\item with the addition of a suitable linearized Kerr solution, the lowest spherical mode $\beta_1$ can be made to vanish, yielding the normalization $\beta = \beta_{\ell > 1}$,

\item each of the three quantities in the normalized solution $(\alpha, \beta, \gamma)$ decay, both in energy and pointwise, through the foliation $\tilde{\Sigma}_{\tau}.$

\end{enumerate}
\end{theorem}

We remark that all the above theorems hold with rougher initial data, with regularity corresponding to the completion of the weighted Sobolev norms specified by the energies (\ref{energies}) and with appropriate decay at infinity.

\section{Remark on the Non-linear Situation}

Although a useful way to capture the effect of angular momentum at the linear level, the axial family is seen to be entirely trivial when viewed in the non-linear setting.

\begin{theorem}
Suppose $(M, g)$ is an axisymmetric spacetime, axial with respect to a reference Schwarzschild spacetime $(\mathcal{M},g_{M})$, with metric of the form

$$ g = -\left(1-\frac{2M}{r}\right)dt^2 + \left(1-\frac{2M}{r}\right)dr^2 + r^2d\theta^2 + r^2\sin^2\theta(d\phi - \omega dt - q_2 dr - q_3 d\theta)^2,$$

\noindent
on the coordinate patch  $(t,r,\theta,\phi)$ with $t\in{\R}, r > 2M, (\theta,\phi)\in{S^2}$.

Further, suppose that $(M,g)$ satisfies the vacuum Einstein equations.  Then $(M,g)$ is isometric to Schwarzschild.

\end{theorem}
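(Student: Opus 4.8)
The plan is to exploit the Kaluza--Klein structure of the ansatz with respect to the axial Killing field $\partial_{\phi}$. Writing the base coordinates as $(x^{0},x^{2},x^{3})=(t,r,\theta)$, the metric takes the fibered form
\begin{equation*}
g = h_{ab}\,dx^{a}dx^{b} + \lambda\,(d\phi - \zeta)^{2},
\end{equation*}
where $h = -(1-\mu)\,dt^{2} + (1-\mu)^{-1}dr^{2} + r^{2}d\theta^{2}$ and $\lambda = r^{2}\sin^{2}\theta$ coincide with the corresponding parts of the Schwarzschild metric $g_{M}$, and $\zeta = \omega\,dt + q_{2}\,dr + q_{3}\,d\theta$ plays the role of a connection one-form whose curvature $F := d\zeta$ has components $Q_{02},Q_{03},Q_{23}$ from Section 3.2. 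The crucial observation is that $\zeta \equiv 0$ recovers exactly $g_{M}$, which is Ricci-flat. Decomposed along this fibration, the four-dimensional Ricci tensor splits into mixed components $R_{a\phi}$ that are linear in $F$ (and coincide with the mixed equations \eqref{Rtphi}, \eqref{Rrphi} and \eqref{Rthetaphi}, now for the full nonlinear $Q$) and base components $R_{ab}$ that equal their Schwarzschild value plus a term quadratic in $F$. The proof will rest entirely on the base components.

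First I would carry out the standard Kaluza--Klein computation of $R_{ab}$ for the fibered metric above. Because $h$ and $\lambda$ are held fixed at their Schwarzschild values, and those values satisfy the reduced equations associated with $F=0$ — equivalently, the $(a,b)$-components of $Ric$ for the metric $h + \lambda\,d\phi^{2} = g_{M}$ vanish — every term in $R_{ab}$ not involving $F$ cancels, leaving
\begin{equation*}
R_{ab} = \tfrac{1}{2}\lambda\,F_{ac}F_{b}{}^{c},
\end{equation*}
with indices raised by the base metric $h$ (the precise value of the positive coefficient is immaterial, as is its sign). Since $\lambda = r^{2}\sin^{2}\theta > 0$, the vacuum condition $R_{ab}=0$ forces the pointwise algebraic identity $F_{ac}F_{b}{}^{c}=0$ throughout the exterior.

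Next I would show this identity forces $F=0$. As the base $(t,r,\theta)$ is three-dimensional, every two-form is decomposable, so $F = p\wedge q$ for covectors $p,q$; expanding gives
\begin{equation*}
F_{ac}F_{b}{}^{c} = (q\cdot q)\,p_{a}p_{b} + (p\cdot p)\,q_{a}q_{b} - (p\cdot q)(p_{a}q_{b} + q_{a}p_{b}),
\end{equation*}
so its vanishing requires $p\cdot p = q\cdot q = p\cdot q = 0$ whenever $p,q$ are independent. In a three-dimensional Lorentzian metric two mutually orthogonal null covectors are necessarily parallel, whence $p\wedge q = 0$ and $F = d\zeta = 0$. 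Thus $Q_{02}=Q_{03}=Q_{23}=0$, exactly as in the kernel computation of Section 5.2 but now at the fully nonlinear level. It remains only to conclude that the spacetime is Schwarzschild: as $\zeta$ is a closed, axisymmetric one-form on the simply connected base $\mathbb{R}_{t}\times(2M,\infty)_{r}\times S^{2}$, it is exact, $\zeta = df$ for a single-valued $f = f(t,r,\theta)$, and the coordinate change $\tilde{\phi} = \phi - f$ — which preserves $2\pi$-periodicity since $f$ is $\phi$-independent — gives $d\phi - \zeta = d\tilde{\phi}$ and reduces $g$ to $g_{M}$.

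The main obstacle is the reduction step: one must verify carefully that fixing $h$ and $\lambda$ at their Schwarzschild values annihilates every contribution to $R_{ab}$ except the quadratic curvature term, so that the nonlinear back-reaction which is discarded at the linear level is precisely what produces rigidity here. The second delicate point is the algebraic implication $F_{ac}F_{b}{}^{c}=0 \Rightarrow F=0$, where one must handle the genuinely null case; this is exactly where the three-dimensional Lorentzian signature is used essentially, and it explains why the nonlinear axial family collapses to the trivial (pure-gauge) one even though the linearized family does not.
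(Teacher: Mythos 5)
Your proof is correct, and it rests on the same structural fact as the paper's proof --- that freezing the base metric and the fiber norm at their Schwarzschild values makes every non-$F$ contribution to the curvature cancel by Ricci-flatness of the reference, so the vacuum equations reduce to a pointwise algebraic condition on $d\zeta$ --- but the way you extract $d\zeta=0$ from that condition is genuinely different. The paper quotes Chandrasekhar's explicit formula for the single component $G_{tt}(g)$, observes that the residual $Q$-dependence is minus a sum of squares of $Q_{02},Q_{03},Q_{23}$ with strictly positive exponential coefficients, and concludes all three vanish at once; this is the axisymmetric analogue of the fact that the energy density of a Maxwell field in a timelike direction vanishes only if the field does, and it sidesteps the null case entirely. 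You instead use the full set of base components $R_{ab}\propto\lambda F_{ac}F_{b}{}^{c}$ and must then rule out null $F$ by hand, which is where your three-dimensionality and Lorentzian-signature argument (decomposability plus ``orthogonal null covectors are parallel'') enters --- correctly handled, and arguably more illuminating about why the mechanism works, but requiring one more nontrivial algebraic lemma than the paper needs. (In three dimensions you could shorten this step by dualizing $F_{ab}=\epsilon_{abc}V^{c}$, so that $F_{ac}F_{b}{}^{c}\propto (V\cdot V)h_{ab}-V_{a}V_{b}$, whose trace forces $V\cdot V=0$ and hence $V=0$.) Both proofs finish identically: $\zeta$ closed on a simply connected base, hence exact, hence removable by $\tilde{\phi}=\phi-f$. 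Two small points to tidy: the base of your reduction is $\mathbb{R}_{t}\times(2M,\infty)_{r}\times[0,\pi]_{\theta}$ rather than $\mathbb{R}\times(2M,\infty)\times S^{2}$ (harmless, since it is still simply connected), and the conventional coefficient in your reduction formula is $-\tfrac{1}{2}\lambda F_{ac}F_{b}{}^{c}$, though, as you note, only its nonvanishing matters.
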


\begin{proof}
Per Chandrasekhar \cite{Chandra1}, for a generic axisymmetric metric placed into our standard form,

\begin{align}
\begin{split}
G_{tt}(g) &= -e^{-2\mu_2}\left[(\psi + \mu_3)_{rr} + \psi_{r}(\psi - \mu_2 + \mu_3)_{r} + \mu_{3,r}(\mu_3 - \mu_2)_{r}\right]\\
 &- e^{-2\mu_3}\left[(\psi + \mu_2)_{\theta \theta} + \psi_{\theta}(\psi - \mu_3 + \mu_2)_{\theta} + \mu_{2,\theta}(\mu_2 - \mu_3)_{\theta}\right]\\
& + e^{-2\nu}\left[\psi_{t}(\mu_2 + \mu_3)_{t} + \mu_{3,t}\mu_{2,t}\right]\\
& -\frac{1}{4}e^{2\psi - 2\nu}\left[e^{-2\mu_2}Q_{20}^2 + e^{-2\mu_3}Q_{30}^2\right] - \frac{1}{4}e^{2\psi - 2\mu_2 - 2\mu_3}Q_{23}^2.
\end{split}
\end{align}

However, given the form of our metric, the first three lines above are nothing other than a component of the Einstein tensor of our reference Schwarzschild.  That is,

\begin{align*}
G(g_{M})_{tt} &= e^{-2\mu_2}\left[(\psi + \mu_3)_{rr} + \psi_{r}(\psi - \mu_2 + \mu_3)_{r} + \mu_{3,r}(\mu_3 - \mu_2)_{r}\right]\\
 &- e^{-2\mu_3}\left[(\psi + \mu_2)_{\theta \theta} + \psi_{\theta}(\psi - \mu_3 + \mu_2)_{\theta} + \mu_{2,\theta}(\mu_2 - \mu_3)_{\theta}\right]\\
& + e^{-2\nu}\left[\psi_{t}(\mu_2 + \mu_3)_{t} + \mu_{3,t}\mu_{2,t}\right] = 0\\
\end{align*}

Applying our vacuum condition on $(M,g)$, we find

\begin{equation}
G_{tt}(g) = -\frac{1}{4}e^{2\psi - 2\nu}\left[e^{-2\mu_2}Q_{20}^2 + e^{-2\mu_3}Q_{30}^2\right] - \frac{1}{4}e^{2\psi - 2\mu_2 - 2\mu_3}Q_{23}^2 = 0.
\end{equation}

The expression is manifestly non-negative, vanishing only when $Q_{02} = Q_{03} = Q_{23} = 0$.  As these appears as coefficients in the two-form

$$
d\zeta = Q_{02} dt\wedge dr + Q_{03} dt \wedge d\theta + Q_{23} dr\wedge d\theta,
$$

\noindent
the one-form $\zeta$ is seen to be closed, hence exact.  Letting $F(t,r,\theta)$ be an associated scalar potential, 
$dF = \zeta = \omega dt + q_2 dr + q_3 d\theta,$ we take the change of coordinate $\tilde{\phi} = \phi - F(t,r,\theta)$ and find

$$ g = -\left(1-\frac{2M}{r}\right)dt^2 + \left(1-\frac{2M}{r}\right)dr^2 + r^2d\theta^2 + r^2\sin^2\theta d\tilde{\phi}^2,$$

\noindent
relating $(M,g)$ to the reference Schwarzschild $(\mathcal{M},g_{M})$.

\end{proof}








\section{Acknowledgements}
The authors would like to thank the National Center for Theoretical Science of National Taiwan University, where this research was initiated, for their warm hospitality.  As well, the authors thank their advisor, Mu-Tao Wang, for his patient guidance and support, without which this work would not have been possible.  We also thank Ye-Kai Wang for many stimulating conversations.
\appendix

\section{The Cotton-Darboux Theorem}\label{CottonDarboux}

The metric expression 

\begin{equation}
g_{\epsilon} = -e^{2\nu}(dx^0)^2 + e^{2\mu_2}(dx^2)^2 + e^{2\mu_3}(dx^3)^2 + e^{2\psi}(d\phi -\omega dx^0 - q_2 dx^2 - q_3 dx^3)^2,
\end{equation}

\noindent
adopted in this paper, which could well be thought of as an ansatz, is in fact a generic coordinate condition for axisymmetric metrics satisfying a mild causality condition.  A proof of this genericity is implied by the Cotton-Darboux theorem, which we recount below. 

Let $(M,g)$ be a spacetime with axisymmetric metric $g$, satisfying the condition of stable causality (see Wald \cite{Wald}).
Choosing coordinates $(x^0, x^1, x^2, \phi)$, where $\phi$ is an axisymmetric coordinate, we consider the contravariant form of the metric 

$$ h = h^{ij}\partial_{i}\partial_{j},$$

\noindent
where $i,j = 0,1, 2$, defined on the level set $\{\phi = c\}$.  


\begin{theorem}(Cotton-Darboux)
A non-degenerate, pseudo-Riemannian metric $h$, expressed in local coordinates $(x^0, x^1, x^2)$ as

$$ h = h^{ij}\partial_{i}\partial_{j},$$

\noindent
can be locally diagonalized by a suitable change of coordinates.
\end{theorem}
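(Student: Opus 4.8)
The plan is to realize the diagonalization as the solution of a \emph{determined} first-order system of partial differential equations and to solve that system by the Cauchy--Kovalevskaya theorem, working in the real-analytic category. Seeking new coordinates $y^1, y^2, y^3$ in which $h$ is diagonal is the same as demanding that their differentials be mutually orthogonal with respect to the contravariant metric, i.e.
\begin{equation}
F^{ij} := h^{ab}\,\partial_a y^i\, \partial_b y^j = 0, \qquad i \neq j.
\end{equation}
There are exactly three such conditions, $F^{12}, F^{13}, F^{23}$, for the three unknown functions $y^1, y^2, y^3$, so the system is determined. We note in passing that in dimension $n$ one has $\binom{n}{2}$ orthogonality conditions for $n$ unknowns, and the equality $\binom{n}{2} = n$ holds precisely when $n = 3$; this is the source of the dimensional restriction, the problem being overdetermined and generically obstructed for $n \geq 4$.

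First I would single out the coordinate $x^0$ as an evolution variable and prescribe Cauchy data for $y^1, y^2, y^3$ on the surface $\{x^0 = 0\}$, chosen with linearly independent differentials. Writing $a_i := h^{0b}\partial_b y^i = \langle dx^0, dy^i\rangle$, a direct computation shows that the Jacobian of $(F^{12}, F^{13}, F^{23})$ with respect to the normal derivatives $(\partial_{0} y^1, \partial_{0} y^2, \partial_{0} y^3)$ has determinant $-2\,a_1 a_2 a_3$. Hence the system can be solved for the normal derivatives, and thus placed in Cauchy--Kovalevskaya normal form, precisely at points where $a_1 a_2 a_3 \neq 0$, that is, where none of the $dy^i$ is $h$-orthogonal to $dx^0$. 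This non-characteristic condition is an open constraint on the Cauchy data, and I would arrange the initial functions $y^i$ on $\{x^0 = 0\}$ so that it holds there.

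With a non-characteristic, real-analytic initial value problem in hand, the Cauchy--Kovalevskaya theorem furnishes a unique local real-analytic solution $(y^1, y^2, y^3)$ near the initial surface. It then remains to check that this solution defines a genuine change of coordinates: by continuity the differentials $dy^1, dy^2, dy^3$ remain linearly independent in a neighborhood of $\{x^0 = 0\}$ provided they are independent on the surface itself, so the map $x \mapsto y$ is a local diffeomorphism after shrinking the neighborhood. In these coordinates the off-diagonal components of $h$ vanish by construction, and inverting shows that the covariant metric is diagonal as well, completing the argument.

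I expect the main obstacle to be the normal-form step: verifying that the orthogonality system is genuinely non-characteristic and choosing the Cauchy data compatibly. One must exhibit initial functions whose differentials are simultaneously independent and non-orthogonal to $dx^0$, so that the determinant $-2\,a_1 a_2 a_3$ does not vanish along $\{x^0 = 0\}$; this is elementary but is the crux on which the whole reduction rests. I would also emphasize the standing real-analyticity hypothesis under which Cauchy--Kovalevskaya applies, this being the natural setting for the classical Cotton--Darboux statement; the purely smooth case is considerably more delicate and is handled by the methods of DeTurck--Yang rather than by the elementary argument sketched here.
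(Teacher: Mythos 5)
Your proposal is correct and follows essentially the same route as the paper: both reduce diagonalization to a determined first-order system of three orthogonality conditions for three unknown coordinate functions, put it in normal form by solving for the $x^0$-derivatives, and invoke Cauchy--Kovalevskaya. The only difference is one of execution --- the paper first passes to geodesic coordinates off a level set of a time function so that the normal derivatives can be isolated by explicit square-root formulas, whereas you work in general coordinates and extract the normal form from the Jacobian determinant $-2\,a_1a_2a_3$; your version also states the non-characteristic condition and the real-analyticity hypothesis more explicitly than the paper does.
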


\begin{proof}

Let $f(x^0, x^1, x^2)$ be a function with $|\nabla f|^2 = h^{ij}f_{i}f_{j} \neq{0}$.  In particular, the existence of such a function in our spacetime setting, with $\nabla f$ being strictly time-like, is implied by stable causality.  Fixing the level set $\{ f = 0 \}$, we define the $\hat{x}^0$ as the geodesic coordinate associated with the geodesics passing through $\{ f = 0\}$, in the direction $\nabla f$.  Choosing coordinates $\hat{x}^1, \hat{x}^2$ on the surface $\{ f = 0 \}$, we propagate them along the direction $\nabla f$ to obtain a local coordinate system $(\hat{x}^0, \hat{x}^1, \hat{x}^2)$.  In these new coordinates, our metric has the form

$$ h = -\partial_{\hat{0}}\partial_{\hat{0}} + h^{\alpha\beta}\partial_{\alpha}\partial_{\beta},$$

\noindent
for $\alpha, \beta = \hat{1},\hat{2}$.

Next, we seek a change of coordinates,

$$ \bar{x}^{i} = \kappa^{i}(\hat{x}^0,\hat{x}^1,\hat{x}^2),$$

\noindent
yielding a diagonal form of our metric.  For this to be the case, the transformation $\kappa = (\kappa^0, \kappa^1, \kappa^2)$ must satisfy

\begin{align}
\begin{split}
\frac{\partial{\kappa^1}}{\partial \hat{x}^0}\frac{\partial{\kappa^2}}{\partial\hat{x}^0} &= h^{\alpha\beta}\frac{\partial \kappa^1}{\partial \hat{x}^{\alpha}}\frac{\partial \kappa^2}{\partial \hat{x}^{\beta}} =: K^0,\\
\frac{\partial{\kappa^2}}{\partial \hat{x}^0}\frac{\partial{\kappa^0}}{\partial\hat{x}^0} &= h^{\alpha\beta}\frac{\partial \kappa^2}{\partial \hat{x}^{\alpha}}\frac{\partial \kappa^0}{\partial \hat{x}^{\beta}} =: K^1,\\
\frac{\partial{\kappa^0}}{\partial \hat{x}^0}\frac{\partial{\kappa^1}}{\partial\hat{x}^0} &= h^{\alpha\beta}\frac{\partial \kappa^0}{\partial \hat{x}^{\alpha}}\frac{\partial \kappa^1}{\partial \hat{x}^{\beta}} =: K^2,
\end{split}
\end{align}

\noindent
where we have used suggestive notation $K^i$ for the RHS, expressible in terms of spatial data.  The above equations imply

\begin{align}
\begin{split}
\frac{\partial \kappa^0}{\partial \hat{x}^0} &= \left(\frac{K^1 K^2}{K^0}\right)^{1/2},\\
\frac{\partial \kappa^1}{\partial \hat{x}^0} &= \left(\frac{K^2 K^0}{K^1}\right)^{1/2},\\
\frac{\partial \kappa^2}{\partial \hat{x}^0} &= \left(\frac{K^0 K^1}{K^2}\right)^{1/2}.
\end{split}
\end{align}

Specifying $\kappa = (\kappa^0, \kappa^1, \kappa^2)$ on the surface $\{ f = 0 \} = \{ \hat{x}^0 = 0\}$, the Cauchy-Kowalewski theorem gives local existence to the 
associated initial value problem, and hence a local coordinate system $(\bar{x}^0, \bar{x}^1, \bar{x}^2)$ diagonalizing the metric $h$.

\end{proof}

\section{Spin-Weighted Spherical Harmonics}\label{spinHarmonics}

We present a few details from the work of Eastwood and Tod \cite{EastwoodTod}, introducing spin-weighted spherical harmonics as an extension of the usual spherical harmonics.

On the holomorphic line bundle $\mathcal{E}(-2s)$ over $S^2$, we define the $L^2$ inner product

\begin{equation}
\langle f, g \rangle = \int_{S^2} f\bar{g}
\end{equation}

\noindent
where the measure on $S^2$ is determined by the natural Hermitian structure of $\mathcal{E}(-2s)$.  The associated inner-product space $L^{2}(s)$ is seen to be a Hilbert space.

It is well known that there exist ``spin-weighted" spherical harmonics on the bundle $\mathcal{E}(-2s)$, which we denote by $Y_{s\ell m}(\theta,\phi)$, for $\ell \geq |s|$ and $-\ell \leq m \leq \ell$.  These harmonics are eigensections of the associated Laplace-Beltrami operator, with

\begin{equation}
\slashed{\Delta}_{\mathcal{E}(-2s)} Y_{s\ell m} = (s^2-\ell (\ell + 1)) Y_{s\ell m},
\end{equation}

\noindent
reducing to the usual spherical harmonics for $s = 0$.  As in the case of spherical harmonics, it is well-known that the spin-weighted spherical harmonics characterize the spectrum of $\slashed{\Delta}_{\mathcal{E}(-2s)}$, with eigenvalues $\lambda = \ell(\ell + 1)-s^2$ for $\ell \geq |s|$.  Even more, we have

\begin{theorem}
The spin-weighted spherical harmonics $Y_{s\ell m}$ are a complete, orthonormal basis in the Hilbert space $L^{2}(s)$.
\end{theorem}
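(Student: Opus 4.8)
The plan is to treat orthonormality and completeness separately, deriving the former from the self-adjointness of $\slashed{\Delta}_{\mathcal{E}(-2s)}$ and the latter from the spectral theory of elliptic operators on a closed manifold. For orthonormality, I would first record that $\slashed{\Delta}_{\mathcal{E}(-2s)}$ is formally self-adjoint on $L^{2}(s)$: since $S^{2}$ is compact without boundary and the measure is the one induced by the natural Hermitian structure on $\mathcal{E}(-2s)$, integration by parts produces no boundary term, so $\langle \slashed{\Delta} f, g\rangle = \langle f, \slashed{\Delta} g\rangle$. Eigensections attached to distinct eigenvalues are consequently orthogonal, which settles orthogonality of $Y_{s\ell m}$ and $Y_{s\ell' m'}$ whenever $\ell \neq \ell'$, the eigenvalue being $s^{2}-\ell(\ell+1)$. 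To separate harmonics of a common $\ell$, I would invoke the azimuthal Killing field: the operator $-i\mathcal{L}_{\partial_{\phi}}$ commutes with $\slashed{\Delta}_{\mathcal{E}(-2s)}$ and is self-adjoint, and the $Y_{s\ell m}$ are its eigensections with eigenvalue $m$, so distinct $m$ again forces orthogonality. Normalizing each $Y_{s\ell m}$ to unit $L^{2}(s)$-norm then yields an orthonormal system.

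For completeness, the key structural input is that $\slashed{\Delta}_{\mathcal{E}(-2s)}$ is a second-order elliptic, self-adjoint operator acting on smooth sections of a Hermitian line bundle over the compact manifold $S^{2}$. The spectral theorem for such operators guarantees that $L^{2}(s)$ admits an orthonormal basis of smooth eigensections, with discrete spectrum and finite multiplicities. It then remains only to verify that this abstract eigenbasis is exhausted by the explicit family $\{Y_{s\ell m}\}$. Since the spectrum has already been identified as $\{s^{2}-\ell(\ell+1) : \ell \geq |s|\}$, I would argue that the $2\ell+1$ harmonics $Y_{s\ell m}$, $-\ell \leq m \leq \ell$, span the full eigenspace for each admissible $\ell$. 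This can be read off from the $SU(2)$-decomposition of the sections, in which each eigenspace is the irreducible representation of dimension $2\ell+1$; alternatively, and more in keeping with the bundle machinery of this paper, one may use a ladder argument, noting that $\partial$ and $\bar{\partial}$ intertwine eigenspaces of consecutive spin weights and are isomorphisms for $\ell > |s|$, so that completeness transports from the classical scalar case $s=0$, where the ordinary spherical harmonics are known to be complete.

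The main obstacle is precisely this exhaustion step. The elliptic spectral theorem furnishes \emph{some} complete orthonormal eigenbasis, but matching it to the listed $\{Y_{s\ell m}\}$ requires ruling out any omitted eigenvalue or eigensection, equivalently showing that the multiplicity of $s^{2}-\ell(\ell+1)$ is exactly $2\ell+1$. The delicate point is the behavior of the ladder operators at the bottom of the tower, where $\partial$ or $\bar{\partial}$ develops a kernel corresponding to $\ell = |s|$; away from that bottom rung the intertwining maps are isomorphisms and the dimension count, hence the transport of completeness, is routine.
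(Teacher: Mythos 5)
The paper offers no proof of this statement at all---it is quoted from Eastwood--Tod with a citation---so there is no internal argument to compare against. Your outline follows the standard route: self-adjointness of $\slashed{\Delta}_{\mathcal{E}(-2s)}$ plus the azimuthal symmetry for orthogonality, the elliptic spectral theorem for the existence of \emph{some} orthonormal eigenbasis, and a ladder or representation-theoretic argument to identify that basis with the $Y_{s\ell m}$. The first two ingredients are fine as stated. The genuine gap is the one you flag yourself: the exhaustion step is announced as ``the main obstacle'' and then never discharged, so as written the proposal establishes only that a complete eigenbasis exists, not that the family $\{Y_{s\ell m}\}$ is one.

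The gap is closeable with tools already present in this paper, and you should close it rather than gesture at two possible strategies. Take $s\geq 0$ (the case $s<0$ is conjugate). Since $\partial^{s}Y_{0\ell m}\sim Y_{s\ell m}$ for $\ell\geq s$ and vanishes for $\ell<s$, it suffices to show that $\partial^{s}$ applied to the span of the ordinary spherical harmonics is dense in $L^{2}(s)$; completeness then transports from the classical case $s=0$ with no multiplicity count and no bottom-of-the-tower casework. A section $h$ orthogonal to the image of $\partial^{s}$ satisfies $(\partial^{\dagger})^{s}h=(-1)^{s}\bar{\partial}^{s}h=0$ (the adjoint relation $\bar{\partial}^{\dagger}=-\partial$ is exactly the one used in the proof of Lemma~\ref{beta1t}), and elliptic regularity makes $h$ smooth. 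Then $\bar{\partial}^{s-1}h$ lies in the kernel of $\bar{\partial}$ on $\mathcal{E}(-2)=\Lambda^{1,0}(S^2)$, i.e.\ it is a holomorphic section of a degree $-2$ line bundle, hence vanishes; descending, each $\bar{\partial}^{j}h$ is a holomorphic section of a line bundle of negative degree and vanishes, so $h=0$. This replaces the vague appeal to the $SU(2)$-decomposition by a two-line cohomological vanishing, and it is the honest content of the ``delicate point'' you raise: the kernel of $\bar{\partial}$ on $\mathcal{E}(-2s)$ is trivial for $s>0$ precisely because the degree is negative, whereas the kernel of $\partial$ on $\mathcal{E}(-2s)$ is the lowest ($\ell=s$) eigenspace.
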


See \cite{EastwoodTod} for a proof. 

The spin-weighted spherical harmonics are further related by the holomorphic and anti-holomorphic differentials $\partial$ and $\bar\partial$.  In particular, beginning from the well-known spherical harmonics $Y_{0\ell m}$, we find

\begin{align*}\label{mapUP}
\partial^{s}Y_{0\ell m} &\sim Y_{s\ell m} \textup{, for}\ s\leq l,\\
\partial^{s} Y_{0\ell m} &= 0 \textup{, for}\ s > \ell,
\end{align*}

\noindent
where we use $\sim$ to indicate that the two differ by a scalar multiple.  In this way, we can construct the spin-weighted spherical harmonics using just the usual ones.

\section{Stress-Energy Formalism}\label{stressEnergy}

As in previous work on the subject, our analysis relies upon vector field multiplier methods, outlined below.  

Associated with a solution $\Upsilon$ of the wave equation $\slashed{\Box}_{\mathcal{L}(-s)}\Upsilon = 0$ is the stress-energy tensor:

\begin{equation}
T_{ab}[\Upsilon] = \frac{1}{2}(\slashed{\nabla}_a \Upsilon \slashed{\nabla}_b \bar{\Upsilon} + \slashed{\nabla}_a \bar{\Upsilon} \slashed{\nabla}_b \Upsilon) - \frac{1}{2}g_{ab}\slashed{\nabla}^{c}\Upsilon\slashed{\nabla}_{c}\bar{\Upsilon}.
\end{equation}

The essential idea behind the multiplier method is to act upon the stress-tensor $T_{ab}$ by a vector field $C^{b}$, applying Stokes' theorem to the resulting one-form.  That is, in a spacetime region $\mathcal{D}$, Stokes' theorem implies

\begin{equation}
\int_{\mathcal{\partial D}} T_{ab}C^{b}\eta^{a} = \int_{\mathcal{D}} \nabla^{a}(T_{ab}C^{b}),
\end{equation} 

\noindent
where $\eta^{a}$ is the normal vector to $\partial \mathcal{D}$, the boundary of $\mathcal{D}$.  The relation between these hypersurface boundary and spacetime "bulk" integrals is the foundation of our analysis.

For simplicity, we denote the contraction one-form, known as the energy current, by 

\begin{equation}
J^{C}_{a} = T_{ab}C^{b}.
\end{equation}  

Likewise, we denote the interior integrand, the energy density, by 

\begin{equation}
K^{C} = \nabla^{a}J^{C}_{a} = \nabla^{a}(T_{ab}C^{b}).
\end{equation}

The above allow us to rewrite Stokes' theorem in the concise form

\begin{equation}
\int_{\mathcal{\partial D}} J^{C}_{a}\eta^{a} = \int_{\mathcal{D}} K^{C}.
\end{equation} 

Note that the divergence term $K^{X}$ expands as

$$ K^{C} = \nabla^{a}(T_{ab}C^{b}) = (\nabla^{a}T_{ab})C^{b} + T_{ab}\nabla^{a}C^{b},$$ 

\noindent
where $T_{ab}$ is divergence-free in this simple case.  The second term involves the deformation tensor $\pi^{C}_{ab} = \nabla_{(a}C_{b)}$, expressible as the Lie derivative $\pi^{C}_{ab} = \frac{1}{2}(\mathcal{L}_{C}g)_{ab}.$  In the case where $C$ is Killing, we remark that the deformation tensor $\pi^{C}$ vanishes.

For more information on the vector field multiplier method, we refer the reader to \cite{Alinhac2}.

\section{Construction of the Red-Shift Multiplier $N$}\label{redShift}
We recount the coordinate independent construction of the red-shift multiplier $N$ given by Dafermos and Rodnianski in \cite{DRClay}, for the wave equation

$$ \slashed{\Box}_{\mathcal{L}(-s)} \Upsilon = 0.$$

To construct $N$, we begin on the event horizon $\mathcal{H}^{+}$, away from the bifurcation sphere.  Let $Y$ be a null transversal to the Killing field $T$, itself tangential on $\mathcal{H}^{+}$.  We specify $Y$ by

\begin{enumerate}
\item $Y$ is future-directed, with normalization $Y \cdot T = -2$,
\item $Y$ is invariant under $T$ and the $SO(3)$ symmetry,
\item On $\mathcal{H}^{+}$, $\nabla_{Y}Y = -\sigma(Y + T)$, for $\sigma \in \R$ as yet unchosen.
\end{enumerate}

Taking $E_{i}$ as tangential to the sphere, we calculate in the frame $\{ T, Y, E_1, E_2 \}$:

\begin{align*}
\nabla_{T}Y = & -\kappa Y \\
\nabla_{Y}Y = & -\sigma(T + Y)\\
\nabla_{E_{i}}Y = &h^{j}_{i}E_{j},
\end{align*}
 
\noindent
with $\kappa$ being the positive surface gravity on Schwarzschild spacetime, and with $h^{i}_{j}$ being the second fundamental form of the sphere $r = 2M$ (recall that we work on the event horizon) with respect to $Y$.
 
We compute
 
$$ K^{Y} = (\nabla^{a}T_{ab})Y^{b} + T_{ab}\nabla^{a}Y^{b}, $$
 
\noindent
with
 
$$ (\nabla^{a}T_{ab})Y^{b} =  0.$$
 
With an adequate choice of $\sigma$, we can ensure that 
 
$$ K^{Y} = T_{ab}\nabla^{a}Y^{b} \geq {c J^{T+Y}_{a}(T + Y)^{a}}.$$
  
Define $N = Y + T$, called the red-shift multiplier.  From the above, $N$ is revealed to be future-directed and time-like along the horizon $\mathcal{H}^{+}$, with 
 
$$ K^{N} = K^{Y} \geq{c J^{N}_{a}N^{a}}.$$
 
Extending to the exterior, we construct a strictly timelike multiplier $N$, satisfying the estimates
 
\begin{align*}
K^{N} &\geq{c J^{N}_{a}N^{a}},\ &&\textup{for}\ 2M\leq r \leq r_0,\\
J^{N}_{a} &\sim J^{T}_{a}, \ &&\textup{for}\ r_0 \leq r \leq R_0,\\
|K^{N}| &\leq  C|J^{T}_{a}T^{a}| \ &&\textup{for}\ r_0 \leq r \leq R_0,\\
N &= T \ &&\textup{for}\ r\geq R_0,
\end{align*}
 
\noindent
for some fixed $2M < r_0 < R_0 < \infty$.

\bibliographystyle{plain}
\bibliography{AxialStability}

\end{document}